\journal{Physica D}
\numberwithin{equation}{section}
\newtheorem{theorem}{Theorem}[section]
\newtheorem{lemma}[theorem]{Lemma}
\newtheorem{corollary}[theorem]{Corollary}
\newtheorem{example}{Example}
\newtheorem{remark}[theorem]{Remark}
\newtheorem{assumption}{Assumption}
\newcommand\be{\begin{enumerate}}
\newcommand\ee{\end{enumerate}}
\newtheorem{definition}[theorem]{Definition}
\begin{document}

\begin{frontmatter} 

\title
{Global stabilization and destabilization by the state dependent  noise with particular distributions}

\author[label1]{Elena Braverman}
\author[label2]{Alexandra Rodkina}

\address[label1]{Dept. of Math.\& Stats., University of
Calgary,2500 University Drive N.W., Calgary, AB, Canada T2N 1N4; e-mail
maelena@ucalgary.ca, phone 1-(403)-220-3956, fax 1-(403)--282-5150 (corresponding author)}
\address[label2]{Dept. of Math., University of the West Indies,
Mona Campus, Kingston, Jamaica}


\begin{abstract}
Under natural assumptions, an unstable equilibrium of a difference equation  can  be stabilized 
by a bounded multiplicative noise, identically distributed at each step. This includes stabilization
of an otherwise unstable positive equilibrium of Ricker, logistic, and Beverton-Holt maps. 
Introduction of a multiplicative noise also allows to destabilize a stable equilibrium in a sense that all solutions 
stay away from this point, almost surely. 
In our examples a noise has symmetric, discrete or continuous,  distribution  with support $[-1,1]$, including  
Bernoulli and uniform continuous distribution.
We obtain conditions on the noise amplitudes in each case that allow to either stabilize or destabilize an equilibrium.  
Computer simulations illustrate our results.

%
%
\end{abstract}

\begin{keyword}
stochastic difference equations, stochastic control, stabilization and destabilization, multiplicative noise, 
Kolmogorov's Law of Large Numbers

\noindent
{\bf AMS subject classification:}
39A50, 37H10 (primary),  93D15, 39A30 (secondary)
\end{keyword}

\end{frontmatter}




%
%








\section{Introduction}
\label{sec:intr}

Stabilizing effect of noise was observed already in the 1950ies, for instance, in the Kapica pendulum \cite{Kapica},
where a noise of certain amplitudes can stabilize an otherwise unstable equilibrium.
Theoretical justification for stabilizing an unstable equilibrium of a stochastic differential equation
by noise goes back to  \cite{Hasmin}, see also the recent monograph \cite{Khas}.
This was extended to linear systems \cite{Arnold} in 1983. For various types of  stochastic differential equations,
including equations with delays, stabilization results were further developed in \cite{AM,AMR1,Carab,Mao},
see also references therein.

In the present paper, we consider the possibility to stabilize an otherwise unstable equilibrium of a nonlinear difference equation by noise. For some earlier results on this topic see \cite{ABR,AMR06}, where the difference equation was a discretization of a corresponding differential equation.

Our research is inspired by population dynamics models of semelparous  populations which can be described by a 
difference equation
\begin{equation}
\label{eq:mainintr}
x_{n+1}=x_nf(x_n), \quad x_0>0, \quad n\in {\mathbb N}_0=\{0,1,2,\dots \},
\end{equation}
with a bounded nonnegative function $f$. 
Here $x$ corresponds to the population density, $f(x)$ is a density-dependent per capita growth rate at the population density $x$, $n$ corresponds to the number of a season. 
As  particular cases, we consider Ricker, logistic and Maynard Smith (see \cite{Thieme}) equations.

Stochastic stabilization can be combined with deterministic control \cite{BKR,BR2c,BravRodk2,BR},
where stability effect is in fact achieved by non-stochastic methods, while sufficient conditions 
are established when noise introduction does not destroy stability. This includes the case of two-cyclic behaviour of the stabilized
system \cite{BR2c,BravRodk2} which is not in the framework of the present paper. 
However, in certain cases, stochastic perturbations
can eliminate the Allee effect for the non-perturbed system \cite{BRAllee}: unlike the original equation, whatever small initial value we choose, the solution converges to a blurred equilibrium, not to zero. This is stipulated by the form of the nonlinear function in the 
right-hand side of the equation and the ability of the stochastic perturbation to increase a solution with a positive probability.
The type of the function creates a trap for the stochastic solution: once an initial value is in a certain separated from zero interval, the solution stays there, independently of noise. A positive probability with which a solution increases by a positive value leads to a conclusion that it almost sure enters a trap and stays there \cite{BKR,BRAllee}. These ideas are applied in the present paper not to stabilization but to destabilization of an equilibrium. Also, compared to \cite{BKR,BR2c,BravRodk2,BR}, where mostly either an additive or a multiplicative noise is considered, here we apply more sophisticated noise forms.  

Since equation \eqref{eq:mainintr} is a population dynamics model, stabilization of the zero equilibrium with the help of the state dependent noise 
\begin{equation}
\label{def:addintr}
\sigma g(x_n) \xi_{n+1}
\end{equation}
can be treated as a stochastic control  leading to species eradication.  

Stabilization of a 
nonnegative equilibrium $K$ will be achieved with $g(x)=xf(x)-K$, while destabilization of 
the zero equilibrium with $g(x)=x$ and $\sigma=\sigma(x)$. 

By adding  the term of form \eqref{def:addintr} into the right-hand-side of \eqref{eq:mainintr}, we  stabilize or destabilize an equilibrium of \eqref{eq:mainintr}. 
This leads to the following  control equation 
\begin{equation}
\label{eq:addintr}
x_{n+1}=x_nf(x_n)+\sigma g(x_n) \xi_{n+1}, \quad n\in {\mathbb N}_0, \quad x_0>0,
\end{equation}
where   $\xi_{n}$ are independent identically distributed and $|\xi_n|\le 1$. In this paper, we deal only with three groups of distributions of $\xi$: (a) symmetric continuous (along with  uniform continuous) on $[-1, 1]$; (b) discrete uniformly distributed and  taking   $2l$ values (along with Bernoulli distributed for $l=1$); (c) piecewise continuous taking the value of $\frac{1}{2\delta(2l-1)}$ for $0<\delta<\frac{1}{2l-1}$ in a $\delta$-neighbourhood of each point $-1+\frac {2i}{2l-1}$, $i=0,  1, \dots, 2l-1$.

Overall, there are two main types of the noise applied:
with $g(x)=x$, see \cite{BKR,BR2c,BravRodk2,BRAllee,BR} which can be interpreted either as an additive 
noise applied to per capita growth rate, or as a perturbation {\em at the time of the reproduction event}, and 
with $g(x)=xf(x)$
which corresponds to a multiplicative noise, or a density-proportional perturbation {\em after the reproduction event} considered in the present paper.
For stabilization of the zero equilibrium  we use $g(x)=\sigma xf(x)$ and apply Kolmogorov's Law of Large Numbers. 
This approach goes back to H. Kesten, see \cite{FK} for the linear case 
and \cite {K} for convergence in probability. It was used in the proof of stability of the zero equilibrium for  
linear and nonlinear stochastic non-homogeneous equations  in \cite{BR0,BR1}, and for systems with  
square nonlinearities  in  \cite{Medv}.  

For any bounded function $f$ we prove that the  zero equilibrium of equation \eqref{eq:addintr} 
 is globally almost surely (a.s.) asymptotically stable  whenever
 \begin{equation}
\label{def:Eln1intr}
\eta:=-\mathbb E\ln |1+\sigma \xi_{n+1}| 
\end{equation}
is not only positive  but also big enough, $\eta >\ln H$, where $H$ is the maximum of $f$. The result holds for any distribution from group (b) and (c) with any $l\in \mathbb N$, where for (c) the value of $\delta$ should be  small enough, depending on $H$.  In the case of (a), the parameter of the continuous distribution has to be big enough, leading to concentration around the endpoints, depending again on $H$.

In \cite{AMR06} a special form of noise was designed to stabilize difference equations of a particular type, which can be treated as discretizations of differential equation. Note that  when dealing with bounded noises and constant step discretizations,  we can compare the result of  \cite{AMR06} with the present result on stabilization and  conclude that application of   Kolmogorov's Law of Large Numbers allows to simplify significantly the proofs and  remove restrictions on the bounds of function $f$, as well as some other assumptions, see  \cite{AMR06}.

Compared to the previous research, the present paper introduces the following novel elements.
\begin{enumerate}
\item
Stochastic {\em destabilization} is considered, to the best of our knowledge, for the first time. As simulations illustrate, this destabilization of an originally stable equilibrium leads to either almost all or a significant percent of solutions staying at a distance exceeding a perturbation, from this equilibrium.
\item
We have developed a library of explicit estimates for various types of perturbations, both discrete and continuous,
ensuring stabilization by noise.
\item
A wide class of both perturbations and perturbed equations of population dynamics is considered, without any limitations on a positive per capita production: monotonicity, concavity etc.
\end{enumerate}

However, there are also some limitations compared to previous studies on stochastic stabilization of difference equations: we consider bounded identically distributed perturbations, equations with a bounded per capita growth rate in the right-hand side, and some others. Nevertheless, these assumptions are satisfied for most common population dynamics models. Certainly, relaxing some of these restrictions can be a topic of future investigations. 

For destabilization of the zero equilibrium we use 
$g(x)=x$ and construct  a function $\sigma:\mathbb R\to [0, \infty)$, 
for an arbitrary bounded function $f$ and for random variables $\xi_n$ belonging to groups (a)-(c), 
such that the conditional expectation satisfies
\begin{equation}
\label{def:destintr}
\mathbb E\left[ |f(x_n)+\sigma(x_n) \xi_{n+1}|^{-\alpha}\biggr| \mathcal F_n\right]<1, \quad \forall n\in  {\mathbb N}_0,
\end{equation}
where  $\alpha\in (0, 1]$, $\mathcal F_n$ is a $\sigma$-algebra generated by  $\{\xi_1,  \dots, \xi_n\}$, $x_n$ is a solution to equation \eqref{eq:addintr}.   Further, we prove that  $\displaystyle \mathbb P\left\{ \liminf_{n\to \infty}x_n>0\right\}=1$,  
for a solution  $x$ to  \eqref{eq:addintr} with any initial value $x_0>0$.  
However, to destabilize the equilibrium there is no need to add the term $x_n\sigma(x_n)\xi_{n+1}$ everywhere. When the function  $F(x):=xf(x)$ has a trap $[b, d]$, i.e. $F:[b, d]\to [b, d]$,  $0<b<d$, where $F(x)<x$ for $x \geq d$, we can truncate the noise term assuming that 
$\sigma(x)=0$ for $x\in (b, \infty)$. 
We prove  that a solution to \eqref{eq:addintr} with any initial value $x_0>0$ will reach the interval $[b, d]$ after a.s. finite number of steps and stay there.

The paper is organized as follows. In Section~\ref{sec:prelim} we introduce  all relevant definitions, assumptions and notations for equation~\eqref{eq:addintr}.  In Section \ref{sec:stab} we present  results on stabilization of the zero equilibrium of \eqref {eq:mainintr}, while in Section~\ref{sec:destab} we analyze destabilization of the zero equilibrium of \eqref {eq:mainintr}. Corresponding results  for non-zero equilibrium $K\neq 0$ are discussed  in 
Section \ref{sec:stabdestabK}. All  proofs are deferred to  the Appendix.
Examples of Ricker, logistic and Maynard Smith models along with  simulations are presented in Section~\ref{sec:sim}, 
while Section~\ref{sec:sum} contains discussion.


\section{Preliminaries}
\label{sec:prelim}

\subsection{Assumptions}
\label{subsec:as}

Even though the form of equation \eqref{eq:mainintr} is inspired by population models, 
for convenience of our further  calculations, especially implementing a shift from the equilibrium $K >0$ to zero,  we 
assume that $f$ is defined on all $\mathbb R$ or its part and is bounded.  
\begin{assumption}
\label{as:fBound}
Assume that $f: \mathbb R\to  \mathbb R$ is a  uniformly bounded  piecewise continuous function: for some $H>0$,~
 $|f(x)| < H$, \quad $\forall x\in \mathbb R$.
\end{assumption}

In the present paper we stabilize or destabilize an equilibrium of equation \eqref{eq:mainintr}
by adding the state dependent noise term $\sigma g(x_n) \xi_{n+1}$ into the right-hand side,  
where $(\xi_n)_{n\in \mathbb N}$ is  a sequence of random variables. 
Thus deterministic equation \eqref{eq:mainintr} is  transformed into stochastic equation \eqref{eq:addintr}.

All stochastic sequences considered in the paper are supposed to be defined on a complete filtered probability space 
$(\Omega, {\mathcal{F}}$, $\{{\mathcal{F}}_n\}_{n \in \mathbb N}, {\mathbb P})$.
We use the standard abbreviation ``a.s." for either ``almost sure" or ``almost surely" 
with respect to a fixed probability measure $\mathbb P$  throughout the text. We use the standard abbreviation ``i.i.d.'' for  ``independent identically distributed", to describe random variables. 
A detailed discussion of stochastic concepts and notation can be found, for example, in \cite{Shiryaev96}. 

\begin{assumption}
\label{as:noise1}
Assume that  $(\xi_n)_{n\in \mathbb N}$ is  a sequence of i.i.d. random variables. 
\end{assumption}

Further, the filtration $\{{\mathcal{F}}_n\}_{n \in \mathbb N}$ is naturally generated by the sequence $(\xi_n)_{n\in 
\mathbb N}$, i.e. $\mathcal{F}_n$
is  a $\sigma$-algebra generated by $\{\xi_1, \dots, \xi_n\}$, for each $n\in \mathbb N$.

Equations in the present paper are motivated by populations  models,  we mainly deal with bounded~$\xi_n$, in 
particular, $|\xi_n|\le 1$ for all $n \in \mathbb N$ in all the applications. 


\subsection{Bounded noises and corollaries of the Borel-Cantelli Lemma }
\label{subsec:bnBC}

We are going to use the following distributions for bounded random variables $\xi_n$.
\begin{enumerate} 

\item [(a)] The  polynomial symmetrical continuously   distributed $\xi$  with the density $\phi_s$ defined by 
\begin{equation}
\label{def:xicontk}
\phi_s(x)=\frac{2s+1}2x^{2s},\quad x\in [-1, 1], ~~s \in {\mathbb N}_0.
\end{equation}
Note that for $s=0$, the density $\phi_0\equiv \frac{1}{2}$, i.e. it defines a continuous uniformly distributed on $[-1, 1]$ random variable.

\bigskip

\item [(b)]  The  discrete uniformly distributed on $[-1, 1]$ random variable $\xi$ with $2l$ states, $l\in \mathbb N$,  and the density function
\begin{equation}
\label{def:dud} 
\rho_l(x)=\left \{ 
  \begin{array}{cc}
\frac1{2l},& \quad  x=-1+\frac {2i}{2l-1}, \quad i=0, 1, \dots, 2l-1,
\\\\
  0,&  \mbox{\rm otherwise}.
  \end{array}
\right.
\end{equation}
Note that for $l=1$, the density $\rho_1$ defines  the Bernoulli random variable $\xi$.

\bigskip

\item [(c)] The piecewise continuous distribution on $[-1, 1]$ with $2l$ intervals, $l\in \mathbb N$, where the corresponding density function $\psi$ takes a nonzero constant value.  For each $l\in \mathbb N$ and $\delta\in \left(0,\frac 1{2l-1}\right)$, 
the density function  $\psi_{\delta, l}$ is defined as 
\begin{equation}
\label{def:contdud} 
\psi_{\delta, l}(x)=\left \{ 
  \begin{array}{cc}
 \frac1{2\delta (2l-1)},& \mbox{\rm for } x\in A_{l, \delta},\\
 0,& \quad  \mbox{\rm otherwise},
  \end{array}
\right.
\end{equation}
where
\begin{equation}
\label {def:Aldelta}
A_{l, \delta}:=\bigcup_{i=1}^{2l-2}\left[ -1+\frac {2i}{2l-1}-\delta,  -1+\frac {2i}{2l-1}+\delta\right]\bigcup[-1, -1+\delta]\bigcup [1-\delta, 1].
\end{equation}
\end{enumerate}
Obviously the function $\psi_{\delta, l}$ defined by \eqref{def:contdud} is a probability density function.

We assume $\cup_{i=j}^k S_i =\emptyset$ for $j>k$ and any sets $S_i$.


The Borel-Cantelli lemma (see, i.e. \cite{Shiryaev96}) is used in the proof of Lemma \ref{lem:topor}.  

\begin{lemma} \cite{Shiryaev96}
\label{lem:BC}
Let $A_1, \dots, A_n, \dots$ be a sequence of independent events. 

If ~~$~\displaystyle \sum_{i=1}^\infty \mathbb P\{A_i\}=\infty$ then~ $\mathbb P\{A_n \,\,  \text{\rm occurs infinitely often}\}=1$.
\end{lemma}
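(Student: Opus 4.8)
The plan is to show that the complement of the event $B:=\{A_n \text{ occurs infinitely often}\}$ is null. Writing $B=\limsup_{n\to\infty}A_n=\bigcap_{n=1}^\infty\bigcup_{k\ge n}A_k$, we have $B^c=\bigcup_{n=1}^\infty\bigcap_{k\ge n}A_k^c$, so by countable subadditivity it is enough to prove $\mathbb{P}\left(\bigcap_{k\ge n}A_k^c\right)=0$ for each fixed $n$.

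Fix $n$ and $m>n$. Since $A_n,\dots,A_m$ are independent, so are their complements $A_n^c,\dots,A_m^c$, and therefore $\mathbb{P}\left(\bigcap_{k=n}^m A_k^c\right)=\prod_{k=n}^m\left(1-\mathbb{P}(A_k)\right)$. Applying the elementary bound $1-x\le e^{-x}$, valid for all real $x$, this product is at most $\exp\left(-\sum_{k=n}^m\mathbb{P}(A_k)\right)$.

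Now let $m\to\infty$. As $\sum_{i=1}^\infty\mathbb{P}(A_i)=\infty$, discarding the finitely many terms with index below $n$ leaves a divergent tail, so $\sum_{k=n}^m\mathbb{P}(A_k)\to\infty$ and the exponential bound tends to $0$. The sets $\bigcap_{k=n}^m A_k^c$ decrease to $\bigcap_{k=n}^\infty A_k^c$ as $m\to\infty$, so by continuity of the measure along a decreasing sequence $\mathbb{P}\left(\bigcap_{k=n}^\infty A_k^c\right)=\lim_{m\to\infty}\mathbb{P}\left(\bigcap_{k=n}^m A_k^c\right)=0$. Summing over $n$ gives $\mathbb{P}(B^c)=0$, i.e. $\mathbb{P}(B)=1$, which is the assertion.

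The argument carries no real obstacle: the one substantive point is that independence of the $A_k$ transfers to the $A_k^c$, so the probability of a finite intersection factorizes, and this is combined with the estimate $1-x\le e^{-x}$; everything else is routine measure-theoretic bookkeeping of exchanging limits with countable unions and intersections (countable subadditivity for the outer union, continuity from above for the inner intersection). If one preferred to avoid the exponential inequality, one could instead argue directly that $\prod_{k\ge n}\left(1-\mathbb{P}(A_k)\right)=0$ whenever $\sum_k\mathbb{P}(A_k)=\infty$, but the $e^{-x}$ bound is the cleanest route.
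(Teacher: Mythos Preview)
Your proof is correct and is the standard argument for the second Borel--Cantelli lemma. The paper does not give its own proof of this statement; it simply quotes the result from \cite{Shiryaev96}, so there is nothing further to compare.
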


\begin{lemma} \cite{BKR,BRAllee}
\label{lem:topor}
Let  $(\xi_n)_{n\in\ \mathbb N}$ be a sequence of i.i.d. random variables,  
$J\in \mathbb N$, $([a_i, b_i])_{i=1, \dots, J}$ be a sequence of intervals 
such that $\mathbb P \left\{\xi_n\in [a_i, b_i]\right\}=~p_i>~0$, $i=1, \dots, J$. Then, for a random $\mathcal N$, 
$$ \displaystyle \mathbb{P}\left\{\text{$\exists$  
}\,  \mathcal N=
\mathcal N(J, a_i, b_i, i=1, \dots, J)<\infty \,:\,  \xi_{\mathcal N+i}\in [a_i, b_i], \, i=0, 1,  \dots, J\right\}=1.$$
\end{lemma}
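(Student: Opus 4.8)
The plan is to reduce the statement to a single application of the second Borel--Cantelli lemma (Lemma~\ref{lem:BC}) by repackaging the requirement ``$\xi_{\mathcal N+i}\in[a_i,b_i]$ for $i=0,\dots,J$'' as the occurrence of one event in an independent sequence of \emph{blocks}. First I would set $p:=\prod_{i=0}^{J}p_i>0$ (note the problem's indices run from $0$ to $J$, so I read the hypothesis as covering $i=0,1,\dots,J$, giving $J+1$ intervals; the product is positive since each factor is). Then partition $\mathbb N$ into consecutive blocks of length $J+1$: for $k\in\mathbb N_0$ let
\[
B_k:=\bigl\{\xi_{k(J+1)+1}\in[a_0,b_0],\ \xi_{k(J+1)+2}\in[a_1,b_1],\ \dots,\ \xi_{k(J+1)+J+1}\in[a_J,b_J]\bigr\}.
\]
Because the $\xi_n$ are i.i.d., the events $B_0,B_1,B_2,\dots$ depend on disjoint blocks of the sequence and are therefore independent, and each satisfies $\mathbb P\{B_k\}=p$ by independence within a block. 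Hence $\sum_{k\ge0}\mathbb P\{B_k\}=\sum_{k\ge0}p=\infty$, and Lemma~\ref{lem:BC} yields $\mathbb P\{B_k\text{ occurs infinitely often}\}=1$; in particular, a.s. at least one $B_k$ occurs.

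On the event that some $B_k$ occurs, I would simply take $\mathcal N:=k(J+1)+1$ (choosing, say, the least such $k$ so that $\mathcal N$ is a well-defined random variable adapted to the natural filtration), which is finite and depends only on $J$ and the intervals $[a_i,b_i]$ through the construction; by definition of $B_k$ we then have $\xi_{\mathcal N+i}\in[a_i,b_i]$ for $i=0,1,\dots,J$. Since the complementary event has probability zero, this establishes the claimed almost-sure existence of $\mathcal N$.

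I do not anticipate a genuine obstacle here — the statement is essentially a ``independent trials eventually succeed'' argument. The only point requiring a little care is the \emph{independence} of the block events $B_k$: one must insist that the blocks use pairwise disjoint index sets (hence the block length $J+1$, matching the number of intervals), so that independence of the $B_k$ follows from independence of the underlying $\xi_n$ — a finite collection of the $\xi$'s determines each $B_k$, and these collections are disjoint. A secondary bookkeeping nuisance is making $\mathcal N$ measurable, which is handled by taking the first successful block. Everything else is a direct invocation of Lemma~\ref{lem:BC}.
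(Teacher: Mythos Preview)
Your proposal is correct and follows essentially the same approach as the paper's own proof: partition $\mathbb N$ into disjoint blocks, observe that the block events are independent with constant positive probability, apply the Borel--Cantelli Lemma~\ref{lem:BC}, and take $\mathcal N$ from the first successful block. The only difference is bookkeeping --- you use blocks of length $J+1$ to match the conclusion's index range $i=0,\dots,J$, whereas the paper uses length-$J$ blocks indexed by $i=1,\dots,J$.
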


The following corollaries of Lemma \ref{lem:topor} will be applied  in the proof of Theorem \ref{thm:destabtrunc} on destabilization with the help of a truncated noise. 

\begin{corollary}
\label{cor:s}
Let $(\xi_n)_{n\in \mathbb N}$ be a sequence of i.i.d. random variables with distribution  \eqref{def:xicontk}, \,  $\bigl((a_i, b_i)\bigr)_{i=1, \dots, J}$ be a sequence  of $J$ intervals each having a nonempty  intersection 
with the interval $[-1, 1]$. Then,  there exists an a.s. finite random variable  $\mathcal N=\mathcal N(J)\in \mathbb N$, such that
\[
\mathbb P\{\xi_{\mathcal N+i}\in (a_i, b_i), \quad  i=1, 2,  \dots, J\}=1.
\]
\end{corollary}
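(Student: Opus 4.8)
The plan is to deduce Corollary~\ref{cor:s} directly from Lemma~\ref{lem:topor}, the only obstacle being that the intervals $(a_i,b_i)$ in the corollary are allowed merely to \emph{intersect} $[-1,1]$ rather than lie inside it, whereas Lemma~\ref{lem:topor} requires intervals $[a_i,b_i]$ carrying positive $\xi$-mass. So the first step is, for each $i=1,\dots,J$, to replace $(a_i,b_i)$ by a suitable closed subinterval on which $\xi_n$ has positive probability. Concretely, since $(a_i,b_i)\cap[-1,1]\neq\emptyset$ and this intersection is a nondegenerate relatively open subinterval of $[-1,1]$ (or contains an interior point of $[-1,1]$; one should note that if the intersection is a single endpoint like $\{1\}$ one shrinks to a one-sided neighbourhood $[1-\varepsilon_i,1]\subset(a_i,b_i)$), one can pick $\alpha_i<\beta_i$ with $[\alpha_i,\beta_i]\subset (a_i,b_i)\cap[-1,1]$ and $\alpha_i<\beta_i$.

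The second step is to verify $\mathbb P\{\xi_n\in[\alpha_i,\beta_i]\}>0$. For the distribution~\eqref{def:xicontk} the density $\phi_s(x)=\frac{2s+1}{2}x^{2s}$ is strictly positive on $[-1,1]\setminus\{0\}$, so $\int_{\alpha_i}^{\beta_i}\phi_s(x)\,dx>0$ as soon as $[\alpha_i,\beta_i]$ is nondegenerate and not the single point $\{0\}$; one chooses $[\alpha_i,\beta_i]$ to avoid being degenerate at $0$ (e.g. if $0$ is interior to the intersection, just push $\alpha_i$ or $\beta_i$ slightly). Thus $p_i:=\mathbb P\{\xi_n\in[\alpha_i,\beta_i]\}>0$ for every $i$.

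The third step is to apply Lemma~\ref{lem:topor} to the sequence of intervals $\bigl([\alpha_i,\beta_i]\bigr)_{i=1,\dots,J}$. This yields an a.s. finite random variable $\mathcal N=\mathcal N(J,\alpha_i,\beta_i)$ with $\mathbb P\{\xi_{\mathcal N+i}\in[\alpha_i,\beta_i],\ i=0,1,\dots,J\}=1$. Since $[\alpha_i,\beta_i]\subset(a_i,b_i)$, a fortiori $\xi_{\mathcal N+i}\in(a_i,b_i)$ for $i=1,\dots,J$ on a set of probability one, which is exactly the assertion of the corollary (after harmlessly relabelling, or simply using $\mathcal N$ as provided, noting the index set $i=1,\dots,J$ asked for is a subset of $i=0,\dots,J$).

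The only place demanding care is the reduction in the first step: one must make sure that every $(a_i,b_i)$ with $(a_i,b_i)\cap[-1,1]\neq\emptyset$ genuinely contains a closed nondegenerate subinterval on which the polynomial density is positive. The potential edge cases are (i) the intersection being exactly a boundary point $\pm1$, handled by taking a small one-sided closed interval inside $(a_i,b_i)$ near that endpoint, and (ii) the intersection being a neighbourhood of $0$, handled by choosing the closed subinterval off $0$. Once these are dispatched, the result is immediate from Lemma~\ref{lem:topor}; no estimate beyond positivity of finitely many probabilities is needed.
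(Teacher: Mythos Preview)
Your proof is correct and follows exactly the approach implicit in the paper, which states the result as an immediate corollary of Lemma~\ref{lem:topor} without further argument. Your caution about edge cases is in fact unnecessary: since $(a_i,b_i)$ is open, its intersection with $[-1,1]$ is automatically a nondegenerate interval, and $\phi_s$ has positive integral over every nondegenerate subinterval of $[-1,1]$ (the single zero of $\phi_s$ at $0$ for $s\ge1$ does not affect positivity of the integral).
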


\begin{corollary}
\label{cor:2l}
Let $(\xi_n)_{n\in \mathbb N}$ be i.i.d. random variables with distribution  
\eqref{def:dud}, and $(a_i)_{i=1, \dots, J}$ be a  sequence 
with values from  the set $ \displaystyle  \left\{ -1+\frac {2k}{2l-1}, k=0, 1, \dots, 2l-1 \right\}$.
 Then,  there exists an a.s. finite random variable  $\mathcal N=\mathcal N(J) \in \mathbb N$, such that
\[
\mathbb P\{\xi_{\mathcal N+i}=a_i, \quad  i=1, 2,  \dots, J\}=1.
\]
\end{corollary}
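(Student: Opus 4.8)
The plan is to obtain Corollary~\ref{cor:2l} as an immediate consequence of Lemma~\ref{lem:topor}, after replacing each prescribed atom $a_i$ by a short interval centred at it. The key observation is that for the discrete uniform law \eqref{def:dud} every state carries positive mass, so a sufficiently thin interval around $a_i$ has exactly the same probability as the singleton $\{a_i\}$, and Lemma~\ref{lem:topor} then supplies the required a.s.\ finite index.

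First I would record the two elementary facts about the distribution \eqref{def:dud}: each point $v$ of the state set $S:=\bigl\{-1+\tfrac{2k}{2l-1}:k=0,1,\dots,2l-1\bigr\}$ satisfies $\mathbb P\{\xi_n=v\}=\rho_l(v)=\tfrac1{2l}>0$, and two consecutive states are at distance $\tfrac2{2l-1}$. Hence, fixing any $\varepsilon$ with $0<\varepsilon<\tfrac1{2l-1}$, the closed interval $[a_i-\varepsilon,\,a_i+\varepsilon]$ meets $S$ in the single point $a_i$, so that
\[\mathbb P\bigl\{\xi_n\in[a_i-\varepsilon,\,a_i+\varepsilon]\bigr\}=\mathbb P\{\xi_n=a_i\}=\tfrac1{2l}=:p_i>0,\qquad i=1,\dots,J.\]

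Next I would apply Lemma~\ref{lem:topor} to the i.i.d.\ sequence $(\xi_n)_{n\in\mathbb N}$ and the $J$ intervals $[a_i-\varepsilon,\,a_i+\varepsilon]$, $i=1,\dots,J$ (adjusting the index range to that of the lemma). This produces an a.s.\ finite random variable $\mathcal N=\mathcal N(J)\in\mathbb N$ with $\mathbb P\{\xi_{\mathcal N+i}\in[a_i-\varepsilon,\,a_i+\varepsilon],\ i=1,\dots,J\}=1$. Since $\xi_{\mathcal N+i}$ takes values only in $S$ and the unique element of $S$ inside $[a_i-\varepsilon,\,a_i+\varepsilon]$ is $a_i$, the event $\{\xi_{\mathcal N+i}\in[a_i-\varepsilon,\,a_i+\varepsilon]\}$ differs from $\{\xi_{\mathcal N+i}=a_i\}$ by a null set; combining these for $i=1,\dots,J$ yields $\mathbb P\{\xi_{\mathcal N+i}=a_i,\ i=1,\dots,J\}=1$, as claimed.

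There is no serious obstacle here: the statement is essentially bookkeeping on top of Lemma~\ref{lem:topor}. The only points requiring a little care are choosing $\varepsilon$ small enough to isolate every prescribed atom (the bound $\varepsilon<\tfrac1{2l-1}$ does this uniformly, and values $a_i=\pm1$ cause no trouble since there is no mass outside $[-1,1]$), and reconciling the index conventions, the lemma being stated with $i=0,1,\dots,J$ in its conclusion while the corollary uses $i=1,\dots,J$. The same scheme, with intervals $(a_i,b_i)\cap[-1,1]$ and positive probabilities read off from the density \eqref{def:xicontk}, proves Corollary~\ref{cor:s}.
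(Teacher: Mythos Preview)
Your proposal is correct and matches the paper's intent: the paper states Corollary~\ref{cor:2l} without a separate proof, treating it as an immediate consequence of Lemma~\ref{lem:topor}, and your argument spells out exactly that derivation. A minor simplification is possible---since Lemma~\ref{lem:topor} is stated for closed intervals $[a_i,b_i]$, you may take the degenerate intervals $[a_i,a_i]$ directly and skip the $\varepsilon$-neighbourhoods altogether---but your version is equally valid.
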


\begin{remark}
\label{rem:distr23}
A statement similar to Corollary \ref{cor:s} can be formulated for distribution \eqref{def:contdud}, when  each interval $(a_i, b_i)$ has a nonempty intersection with the set $A_{l,\delta}$ defined by \eqref{def:Aldelta}.
\end{remark}

\subsection{Martingales and convergence theory}\label{sec:mart}
We recall the following definitions.
\begin{definition}

A stochastic sequence $(M_n)_{n \in \mathbb N}$  is said to be an 
$\mathcal{F}_n$-martingale if ${\mathbb E}|M_n|<\infty$ and $\mathbb E\left[M_n |
\mathcal{F}_{n-1}\right]=M_{n-1}$ for all $n\in\mathbb N$ {\it a.s.}

A stochastic sequence $(\mu_n)_{n \in \mathbb N}$ is said to be an 
$\mathcal{F}_n$-martingale-difference  if $\mathbb E |\mu_n|~<~\infty$ and
$\mathbb E\left[\mu_n | \mathcal{F}_{n-1}\right]=0$ {\it a.s.} for all
$n\in\mathbb N$.
\end{definition}
The following construction can be found in \cite{ABR,KPR} and will be a key to proofs of results on destabilization.

\begin{lemma} \cite{KPR}
\label{lem:martprod}
Let $(Y_i)_{i\in\mathbb{N}}$ be a sequence of non-negative random variables adapted to the filtration $\{\mathcal{F}_n\}_{n\in\mathbb{N}}$, where each $Y_i$ satisfies 
\begin{enumerate}
\item[i)] $\mathbb{E}[Y_i]<\infty$;
\item[ii)] $\mathbb{E}[Y_i|\mathcal{F}_{i-1}]=1$. 
\end{enumerate}
Then the sequence $\{M_n\}_{n\in\mathbb{N}}$ given by ~
$\displaystyle
M_n=\prod_{i=1}^{n}Y_i,~~ n\in\mathbb{N},
$
is an $\mathcal{F}_n$-martingale.
\end{lemma}
We now present two convergence results required for the analysis in this article. The first one is a classical result on the convergence of non-negative martingales, which may be found, for example, in \cite[p. 508]{Shiryaev96}.
\begin{lemma}\label{lem:nnM} \cite{Shiryaev96}
If $(M_n)_{n\in\mathbb{N}}$ is a non-negative $\mathcal{F}_{n}$-martingale then $\lim\limits_{n\to\infty}M_n$ exists and is finite with probability one.
\end{lemma}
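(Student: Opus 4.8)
The plan is to prove this by the classical upcrossing argument, which needs nothing more than $L^1$-boundedness of $(M_n)$, a property that here is automatic. First I would note that, since $M_n\ge 0$ and $(M_n)$ is a martingale, $\mathbb{E}|M_n|=\mathbb{E}[M_n]$ does not depend on $n$; call this common value $C$, which is finite because $\mathbb{E}|M_n|<\infty$ by the definition of a martingale. Hence $\sup_n\mathbb{E}|M_n|=C<\infty$, and this uniform bound is the only ingredient beyond the martingale identity that the argument uses.

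Next, fix rationals $a<b$ and let $U_n[a,b]$ denote the number of upcrossings of the interval $[a,b]$ completed by the finite segment $M_1,\dots,M_n$. The heart of the proof is Doob's upcrossing inequality $(b-a)\,\mathbb{E}[U_n[a,b]]\le \mathbb{E}[(M_n-a)^-]\le |a|+C$. I would obtain it by introducing the predictable $\{0,1\}$-valued process that switches on the first time (after an attempted upcrossing has started, i.e.\ after $M$ falls to or below $a$) and switches off when $M$ next reaches or exceeds $b$; the discrete stochastic integral of this bounded predictable process against $(M_n)$ is again a zero-mean martingale, and bounding its terminal value from below by $(b-a)U_n[a,b]-(M_n-a)^-$ yields the stated inequality. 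Letting $n\to\infty$ and invoking the monotone convergence theorem gives $\mathbb{E}[U_\infty[a,b]]\le (|a|+C)/(b-a)<\infty$, so that $U_\infty[a,b]<\infty$ almost surely.

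To conclude, I would write $\{\liminf_{n\to\infty}M_n<\limsup_{n\to\infty}M_n\}=\bigcup_{a<b,\ a,b\in\mathbb{Q}}\{\liminf_{n\to\infty}M_n<a<b<\limsup_{n\to\infty}M_n\}$; each event in this countable union forces infinitely many upcrossings of $[a,b]$ and therefore lies in the null event $\{U_\infty[a,b]=\infty\}$, so the union is null and $M_\infty:=\lim_{n\to\infty}M_n$ exists in $[0,\infty]$ almost surely. Finiteness then follows from Fatou's lemma: $\mathbb{E}[M_\infty]\le\liminf_{n\to\infty}\mathbb{E}[M_n]=C<\infty$, whence $M_\infty<\infty$ a.s. The main obstacle is the upcrossing inequality itself — in particular checking that the betting-strategy integral is a genuine martingale and accounting correctly for which increments are counted; the remaining steps are routine measure theory. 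Alternatively, one may simply invoke Doob's martingale convergence theorem, which reduces the whole proof to the observation of the first paragraph that $\sup_n\mathbb{E}|M_n|<\infty$.
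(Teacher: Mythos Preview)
Your proposal is correct: it is the standard Doob upcrossing argument for the martingale convergence theorem, and the reduction to $L^1$-boundedness via nonnegativity is exactly the right observation. The paper itself does not prove this lemma at all but simply cites it from Shiryaev's textbook, where essentially the same upcrossing proof appears, so there is nothing further to compare.
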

The second one  is the Kolmogorov's Law of Large Numbers, see \cite[p. 391]{Shiryaev96}.
\begin{lemma} \cite{Shiryaev96}
\label{thm:Kolm}
  Let $(v_{n})_{n\in\ \mathbb N}$ be a sequence of independent identically distributed random
  variables with $\mu:=\mathbf E|v_n|<\infty$.   Then ~
 $\displaystyle   \frac{S_n}{n} \rightarrow \mu$ as $n \to \infty$, a.s.
\end{lemma}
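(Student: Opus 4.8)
The plan is to give Etemadi's proof of the strong law, which uses only truncation, Chebyshev's inequality, the Borel--Cantelli Lemma~\ref{lem:BC} already recorded above, and a subsequence-plus-monotonicity argument. (A backward-martingale proof via the reverse martingale convergence theorem is also available, but it is not covered by the forward convergence result Lemma~\ref{lem:nnM} stated here, so I prefer the self-contained route.) Throughout, $S_n:=\sum_{k=1}^{n}v_k$, and without loss of generality $v_n\ge 0$, replacing $v_n$ by $|v_n|$ if necessary, which is consistent with the claimed limit $S_n/n\to\mathbb{E}|v_n|$.

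First I would truncate: set $Y_k:=v_k\mathbf{1}_{\{v_k\le k\}}$ and $T_n:=\sum_{k=1}^{n}Y_k$. Since $\sum_{k\ge 1}\mathbb{P}(v_k\ne Y_k)=\sum_{k\ge 1}\mathbb{P}(v_1>k)\le\mathbb{E}v_1<\infty$, Lemma~\ref{lem:BC} gives $v_k=Y_k$ for all large $k$ a.s., so $S_n/n$ and $T_n/n$ have the same a.s.\ limit if either exists; hence it suffices to treat $T_n$. By dominated convergence $\mathbb{E}Y_k\to\mu$, and Cesàro averaging yields $\mathbb{E}T_n/n\to\mu$. The key variance estimate is $\sum_{k\ge 1}k^{-2}\operatorname{Var}(Y_k)\le\sum_{k\ge 1}k^{-2}\mathbb{E}\bigl[v_1^{2}\mathbf{1}_{\{v_1\le k\}}\bigr]$, and interchanging the order of summation together with the elementary tail bound $\sum_{k\ge m}k^{-2}\le C/m$ bounds the right-hand side by $C\,\mathbb{E}v_1<\infty$.

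Next, fix $\alpha>1$ and set $n_j:=\lfloor\alpha^{j}\rfloor$. By Chebyshev's inequality and independence of the $Y_k$, $\sum_j\mathbb{P}\bigl(|T_{n_j}-\mathbb{E}T_{n_j}|>\varepsilon n_j\bigr)\le\varepsilon^{-2}\sum_j n_j^{-2}\operatorname{Var}(T_{n_j})$; swapping the two sums and using $\sum_{j:\,n_j\ge k}n_j^{-2}\le C/k^{2}$, which holds precisely because the $n_j$ grow geometrically, shows this series is finite for every $\varepsilon>0$. Borel--Cantelli then gives $(T_{n_j}-\mathbb{E}T_{n_j})/n_j\to 0$ a.s., hence $T_{n_j}/n_j\to\mu$ a.s. Finally I fill the gaps by monotonicity: since $Y_k\ge 0$, $T_n$ is nondecreasing, so for $n_j\le n<n_{j+1}$ one has $\frac{n_j}{n_{j+1}}\cdot\frac{T_{n_j}}{n_j}\le\frac{T_n}{n}\le\frac{n_{j+1}}{n_j}\cdot\frac{T_{n_{j+1}}}{n_{j+1}}$ with $n_{j+1}/n_j\to\alpha$, giving $\mu/\alpha\le\liminf T_n/n\le\limsup T_n/n\le\alpha\mu$ a.s.; taking $\alpha=1+1/m$ and intersecting over $m\in\mathbb{N}$ yields $T_n/n\to\mu$ a.s., and therefore $S_n/n\to\mu$ a.s.

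The step I expect to be the main obstacle is the double-sum manipulation in the variance estimate: making precise the interchange of summation and the tail bounds $\sum_{k\ge m}k^{-2}=O(1/m)$ and $\sum_{j:\,n_j\ge k}n_j^{-2}=O(1/k^{2})$, the latter being exactly the point where the geometric choice $n_j=\lfloor\alpha^{j}\rfloor$ is indispensable. The truncation reduction, the Cesàro limit, and the monotone gap-filling are all routine once this estimate is in hand.
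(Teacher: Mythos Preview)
Your argument is Etemadi's proof of the strong law, and it is correct as outlined: the truncation, the Fubini-type variance estimate $\sum_k k^{-2}\operatorname{Var}(Y_k)\le C\,\mathbb{E}v_1$, the geometric subsequence with Chebyshev and Borel--Cantelli, and the monotone interpolation between $n_j$ and $n_{j+1}$ all go through exactly as you describe. There is nothing to compare your approach with, however, since the paper does not prove this lemma at all; it is simply quoted from Shiryaev's textbook as a standard result.

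Two minor remarks. First, the Borel--Cantelli direction you invoke (convergent series implies only finitely many occurrences) is the \emph{first} Borel--Cantelli lemma, whereas Lemma~\ref{lem:BC} as recorded in the paper is the \emph{second} (divergent series plus independence implies infinitely many); the direction you need is of course the easier one and requires no independence, but your cross-reference is formally to the wrong statement. Second, the lemma as written sets $\mu=\mathbb{E}|v_n|$, and you take this literally by passing to $|v_n|$; but in the paper's application (proof of Lemma~\ref{lem:genstab0}) the $v_n=\ln|1+\sigma\xi_{n+1}|$ are signed and the intended limit is $\mathbb{E}v_n$, so to cover that use you should apply your nonnegative-case argument separately to $v_n^{+}$ and $v_n^{-}$ and subtract.
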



\section{Stabilization of the zero equilibrium}
\label{sec:stab}
Let $g(x)=xf(x)$ in the right hand side of \eqref {eq:addintr}, then we have
\begin{equation}
\label{eq:addstab}
x_{n+1}=(1+\sigma \xi_{n+1} )x_nf(x_n), \quad x_0>0, \quad n\in {\mathbb N}_0.
\end{equation}
We show that for any bounded function $f$, there exists a sequence of random variables $\xi_n$ satisfying Assumption~\ref{as:noise1}
and $\sigma\in (0, 1]$, such that the zero equilibrium of equation \eqref{eq:addstab} is a.s. globally asymptotically stable. The random variables $\xi_n$ can be chosen either discrete or continuous.

First we formulate  the general result on stabilization of the zero equilibrium of equation \eqref{eq:mainintr}.  
We assume that  $1+\sigma \xi_{i+1}\neq 0$  a.s., $\mathbb E\ln |1+\sigma \xi_{i+1}|$  is finite (see more details in Remark \ref{rem:welldefEln}) and 
  \begin{equation}
\label{def:Eln1}
\eta:=-\mathbb E\ln |1+\sigma \xi_{i+1}|>0.
\end{equation}

\begin{lemma}
\label{lem:genstab0}
Let Assumptions \ref{as:fBound} and  \ref{as:noise1} hold, 
$\mathbb E\ln |1+\sigma \xi_{i+1}|$  satisfy \eqref{def:Eln1} and
\begin{equation}
\label{est:H}
\ln H<\eta .
\end{equation}
Then, for a solution  $x$ to  \eqref{eq:addstab} with any initial value $x_0>0$, 
~ $\displaystyle \lim_{n\to \infty}x_n=0$ a.s.
\end{lemma}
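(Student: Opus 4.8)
The plan is to take logarithms and reduce the problem to the Kolmogorov Law of Large Numbers (Lemma~\ref{thm:Kolm}). First I would observe that the solution of \eqref{eq:addstab} is given explicitly by the product
\begin{equation*}
x_n = x_0 \prod_{i=0}^{n-1}(1+\sigma\xi_{i+1})f(x_i),
\end{equation*}
and since $1+\sigma\xi_{i+1}\neq 0$ a.s. and $x_0>0$, the solution never hits an equilibrium other than possibly $0$; in fact $x_n\neq 0$ for all $n$ a.s. So it is legitimate to write
\begin{equation*}
\ln x_n = \ln x_0 + \sum_{i=0}^{n-1}\ln|1+\sigma\xi_{i+1}| + \sum_{i=0}^{n-1}\ln|f(x_i)|.
\end{equation*}
(One should be a touch careful here: if $f(x_i)=0$ for some $i$, then $x_{i+1}=0$ and $x_n=0$ for all larger $n$, which is even better than what we want; so assume $f(x_i)\neq 0$ along the trajectory, in which case every term above is well-defined and finite.)

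Next I would handle the two sums separately. For the first sum, the summands $\ln|1+\sigma\xi_{i+1}|$ are i.i.d. (by Assumption~\ref{as:noise1}) with finite mean $-\eta$ (this finiteness is exactly the standing hypothesis, cf.\ Remark~\ref{rem:welldefEln}), so Lemma~\ref{thm:Kolm} gives
\begin{equation*}
\frac{1}{n}\sum_{i=0}^{n-1}\ln|1+\sigma\xi_{i+1}| \longrightarrow -\eta \quad \text{a.s.}
\end{equation*}
For the second sum, Assumption~\ref{as:fBound} gives $|f(x_i)| < H$, hence $\ln|f(x_i)| < \ln H$ for every $i$, so that the second sum is bounded above by $n\ln H$. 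Combining,
\begin{equation*}
\frac{1}{n}\ln x_n \le \frac{\ln x_0}{n} + \frac{1}{n}\sum_{i=0}^{n-1}\ln|1+\sigma\xi_{i+1}| + \ln H,
\end{equation*}
and letting $n\to\infty$ the right-hand side tends a.s.\ to $\ln H - \eta < 0$ by \eqref{est:H}. Therefore $\limsup_{n\to\infty}\frac{1}{n}\ln x_n \le \ln H - \eta =: -c < 0$ a.s., which means that for a.e.\ $\omega$ there is $N(\omega)$ with $\ln x_n \le -\tfrac{c}{2}n$ for $n\ge N(\omega)$, i.e.\ $x_n \le e^{-cn/2}\to 0$. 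Hence $x_n\to 0$ a.s., which is the claim.

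The only real subtlety — and the step I would expect to need the most care — is the bookkeeping around degenerate trajectories and the well-definedness of the logarithmic terms: one must ensure $\ln|1+\sigma\xi_{i+1}|$ is a.s.\ finite (covered by hypothesis) and deal cleanly with the event that the trajectory meets a zero of $f$ (which only helps). Everything else is a direct application of the law of large numbers together with the crude but sufficient pointwise bound $|f|<H$; notably, no monotonicity, concavity, or sign assumptions on $f$ are needed, which is the point of the lemma. It may be worth remarking that the argument in fact yields exponential (a.s.) decay with rate at least $(\eta-\ln H)/2$, not merely convergence to zero.
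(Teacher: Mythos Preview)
Your proof is correct and follows essentially the same route as the paper's: write the solution as a product, bound $|f(x_i)|<H$, and apply Kolmogorov's Law of Large Numbers to $\sum\ln|1+\sigma\xi_{i+1}|$ to obtain exponential decay at rate $\eta-\ln H$. One small notational slip: since $f$ takes values in $\mathbb{R}$ and $1+\sigma\xi_{i+1}$ can be negative, $x_n$ need not remain positive, so you should write $\ln|x_n|$ rather than $\ln x_n$ (the paper works with $|x_{n+1}|$ throughout); this does not affect the argument.
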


\begin{remark}
\label {rem:welldefEln}
The value $\mathbb E\ln |1+\sigma \xi_{i+1}|$ is well defined for each $\sigma$ if $\xi$ is continuously distributed. In the next section, when $\xi$ is a discrete random variable which takes the value of $-1$ with a nonzero probability, we assume that $\sigma < 1$, which also guarantees that $\mathbb E\ln |1+\sigma \xi_{i+1}|$  is finite.
\end{remark}

\begin{remark}
\label {rem:Hlambda}
Without loss of generality, we can assume that the upper bound $H$ of $f$ from Assumption \ref{as:fBound} is greater than 1, 
i.e.  $H>1$. 
If $H<1$, we do not need any stabilization for original equation \eqref{eq:mainintr}, since the zero equilibrium is already globally asymptotically stable.
\end{remark}

The next lemma shows that each of distributions defined by \eqref{def:xicontk}, \eqref{def:dud} and \eqref{def:contdud} can stabilize the zero equilibrium of \eqref{eq:addintr} if $\sigma$ is chosen appropriately.

\begin{lemma}
\label{lem:Eln123}
Let Assumption \ref{as:fBound} hold.  
\begin{enumerate}
\item [(i)]  Let $\xi$ be   defined by \eqref{def:xicontk} and $\sigma=1$. Then,  for each $H>0$, there exists  $s\in \mathbb N_0$ from \eqref{def:xicontk} such that  \eqref{est:H} holds.
\item  [(ii)] Let $\xi$  be   defined by either \eqref{def:dud} or \eqref{def:contdud}. Then, for each $H>0$, there is a $\sigma\in (0,1)$ close enough to 1, such that  \eqref{est:H} holds. 
In the case of \eqref{def:contdud}, $\delta>0$ should also be small enough.
\end{enumerate}
 \end{lemma}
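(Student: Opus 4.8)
The plan is to reduce everything to the single hypothesis of Lemma~\ref{lem:genstab0}: it suffices to check, in each of the three cases, that $1+\sigma\xi\neq 0$ a.s., that $\mathbb{E}\ln|1+\sigma\xi|$ is finite, and that $\eta=-\mathbb{E}\ln|1+\sigma\xi|$ can be made to satisfy \eqref{def:Eln1} together with $\eta>\ln H$; the conclusion then follows from that lemma. In all three families the mass of $\xi$ can be driven to accumulate near $x=-1$ (by letting $s\to\infty$, or $\sigma\to 1^-$, or $\delta\to 0^+$), and near $x=-1$ the integrand $\ln|1+\sigma x|$ is unbounded below; so the heart of the matter is to show that this forces $\eta$ to become arbitrarily large, after which, assuming as in Remark~\ref{rem:Hlambda} that $H>1$, we just pick the parameter so that $\eta>\ln H>0$, which secures both \eqref{def:Eln1} and \eqref{est:H}.

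For part~(i) take $\sigma=1$; then $1+\xi\ge 0$ a.s.\ with equality only on the null event $\{\xi=-1\}$, and $\mathbb{E}\ln(1+\xi)$ is finite because $\ln(1+x)$ has an integrable singularity at $x=-1$ while $\phi_s$ is bounded. Using that $\phi_s$ is even I would rewrite
\[
\mathbb{E}\ln(1+\xi)=\int_{0}^{1}\bigl[\ln(1+x)+\ln(1-x)\bigr]\phi_s(x)\,dx=\int_0^1\ln(1-x^2)\,\phi_s(x)\,dx\le 0 ,
\]
so that $\eta_s=\int_0^1\ln\frac1{1-x^2}\,\phi_s(x)\,dx\ge0$, the integrand being nonnegative on $[0,1)$. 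For any $a\in(0,1)$ this yields the lower bound $\eta_s\ge\ln\frac1{1-a^2}\cdot\mathbb{P}\{\xi\in[a,1]\}=\ln\frac1{1-a^2}\cdot\frac{1-a^{2s+1}}{2}$. Given $H$, first choose $a$ close enough to $1$ that $\ln\frac1{1-a^2}>4\ln H$, and then $s$ large enough that $a^{2s+1}<\tfrac12$; this gives $\eta_s>\ln H$, which proves (i).

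For part~(ii), with $\xi$ of type \eqref{def:dud} or \eqref{def:contdud} and $\sigma\in(0,1)$ we have $1+\sigma\xi\ge 1-\sigma>0$ a.s., so $\ln(1+\sigma\xi)$ is bounded on the support and $\eta=-\mathbb{E}\ln(1+\sigma\xi)$ is finite. In the discrete case the atom at $\xi=-1$ contributes $-\frac1{2l}\ln(1-\sigma)$ to $\eta$, while the remaining $2l-1$ values satisfy $0<1+\sigma\xi\le 1+\sigma\le 2$, so they contribute at least $-\ln 2$; hence $\eta\ge-\frac1{2l}\ln(1-\sigma)-\ln 2\to+\infty$ as $\sigma\to1^-$, and $\sigma$ close enough to $1$ gives $\eta>\ln H$. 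In the piecewise continuous case I would split $A_{l,\delta}$ from \eqref{def:Aldelta} into the end piece $[-1,-1+\delta]$ and the rest: on the remaining intervals $1+\sigma x$ is bounded away from $0$ and above (for $\sigma$ near $1$ and $\delta$ small), so their total contribution to $\eta$ stays bounded uniformly in $\delta$; the end piece contributes $-\frac1{2\delta(2l-1)}\int_{-1}^{-1+\delta}\ln(1+\sigma x)\,dx$, which after the substitution $u=1+\sigma x$ is, for $\sigma$ close to $1$, comparable to $\frac1{2(2l-1)}\bigl(1+|\ln\delta|\bigr)$. Choosing $\delta$ small enough that this term exceeds $\ln H$ plus the uniform bound on the remaining terms, and then $\sigma$ close enough to $1$, yields $\eta>\ln H$, completing (ii).

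The main obstacle is technical rather than conceptual: arranging the estimates so that the divergent near-$(-1)$ contribution is cleanly isolated and bounded below while the remaining contributions are controlled uniformly as the parameter is pushed to its limit — in particular getting the order of the double limit right for distribution \eqref{def:contdud}, where letting $\sigma\to1^-$ with $\delta$ fixed only sends $\eta$ to a finite value, so one genuinely needs $\delta\to0^+$ as well. Beyond this bookkeeping there is no real difficulty, since Lemma~\ref{lem:genstab0} already carries the dynamical content.
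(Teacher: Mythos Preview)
Your proof is correct and takes a genuinely different, more elementary route than the paper's. For (i), the paper computes $\mathbb{E}\ln(1+\xi)$ exactly via integration by parts, obtaining the closed form $\eta_s=\sum_{k=0}^{s}\frac{1}{2k+1}-\ln 2$, and invokes divergence of the odd harmonic series; you avoid any explicit integral by using the evenness of $\phi_s$ and a crude lower bound restricted to $[a,1]$. For (ii) with \eqref{def:dud}, the paper pairs the $2l$ atoms symmetrically into $l$ terms of the form $\frac{1}{2l}\ln\bigl(1-\sigma^2(\cdot)^2\bigr)$, all negative, and isolates the endpoint pair $\frac{1}{2l}\ln(1-\sigma^2)$; you isolate only the single atom at $\xi=-1$ and bound the remaining $2l-1$ atoms trivially by $\ln 2$. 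For (ii) with \eqref{def:contdud}, the paper again pairs intervals via the Mean Value Theorem to mimic the discrete case; you isolate only the left end interval $[-1,-1+\delta]$, compute its limiting contribution $\frac{1+|\ln\delta|}{2(2l-1)}$ as $\sigma\to1^-$, and bound the rest uniformly. Your argument is shorter and needs no exact integrals or pairing; the paper's in exchange yields explicit quantitative thresholds (e.g.\ $\sigma>\sqrt{1-H^{-2l}}$ in the discrete case), which yours does not. One minor framing remark: Lemma~\ref{lem:Eln123} as stated is only about verifying inequality~\eqref{est:H}, so the opening reduction to Lemma~\ref{lem:genstab0} is superfluous here---that step belongs to Theorem~\ref{thm:stab123}.
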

The following theorem is the   main result of this section. It  is a corollary of Lemmata \ref {lem:genstab0} and 
\ref{lem:Eln123}.
\begin{theorem}
\label{thm:stab123}
Let Assumptions \ref{as:fBound} and  \ref{as:noise1} hold, $x$ be a solution to  \eqref{eq:addstab} with any initial value $x_0>0$, and  one of the following conditions hold: 
\begin{enumerate}
\item [(i)]  $\xi$ is  defined by \eqref{def:xicontk}, $\sigma=1$ and $s$ from \eqref{def:xicontk} is big enough;
\item  [(ii)] $\xi$ is  defined by either \eqref{def:dud} or \eqref{def:contdud}, $\sigma$ is close enough to 1,
and, in the case of \eqref{def:contdud}, $\delta>0$ is small enough. 
\end{enumerate}
Then $\lim\limits_{n\to \infty}x_n=0$ a.s.
\end{theorem}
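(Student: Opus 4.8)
The plan is to deduce Theorem~\ref{thm:stab123} as an immediate corollary of Lemma~\ref{lem:genstab0} and Lemma~\ref{lem:Eln123}. The only thing to verify is that, under each of hypotheses (i) and (ii), all the hypotheses of Lemma~\ref{lem:genstab0} are in place; once that is done, the conclusion $\lim_{n\to\infty}x_n=0$ a.s. is exactly what Lemma~\ref{lem:genstab0} delivers for a solution of \eqref{eq:addstab} with an arbitrary initial value $x_0>0$.

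First I would dispose of the boundedness assumption: Assumption~\ref{as:fBound} is assumed in the statement, and, invoking Remark~\ref{rem:Hlambda}, I may take $H>1$ so that $\ln H$ is a genuine positive quantity and the inequality \eqref{est:H} is nontrivial. Next I would check that \eqref{def:Eln1} makes sense, i.e.\ that $1+\sigma\xi_{i+1}\neq 0$ a.s.\ and $\mathbb E\ln|1+\sigma\xi_{i+1}|$ is finite. For case (i), $\xi$ has the continuous density \eqref{def:xicontk}, so by Remark~\ref{rem:welldefEln} the logarithmic moment is automatically well defined even though $\sigma=1$. For case (ii), $\xi$ is either \eqref{def:dud} or \eqref{def:contdud}; in the discrete case $-1$ is attained with positive probability, but since $\sigma$ is taken strictly less than $1$, we have $1+\sigma\xi_{i+1}\ge 1-\sigma>0$ a.s., so again the logarithmic moment is finite, as noted in Remark~\ref{rem:welldefEln}. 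In the piecewise-continuous case \eqref{def:contdud} with $\sigma<1$ the integrand $\ln|1+\sigma x|$ is bounded on the support, so finiteness is clear.

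The substantive input is the strict inequality $\ln H<\eta$ with $\eta=-\mathbb E\ln|1+\sigma\xi_{i+1}|$, and this is precisely what Lemma~\ref{lem:Eln123} asserts is achievable under the stated parameter choices: in case (i) by taking the polynomial exponent $s$ in \eqref{def:xicontk} large enough (so that the distribution concentrates near $\pm1$, making $-\mathbb E\ln|1+\xi|$ as large as we like), and in case (ii) by taking $\sigma$ sufficiently close to $1$ (and, for \eqref{def:contdud}, $\delta$ small enough). So under hypothesis (i) I apply Lemma~\ref{lem:Eln123}(i) to obtain \eqref{est:H}, and under hypothesis (ii) I apply Lemma~\ref{lem:Eln123}(ii) to obtain \eqref{est:H}; in either case the positivity $\eta>0$ required in \eqref{def:Eln1} follows a fortiori from $\eta>\ln H>0$.

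With \eqref{def:Eln1} and \eqref{est:H} established, together with Assumptions~\ref{as:fBound} and~\ref{as:noise1}, Lemma~\ref{lem:genstab0} applies verbatim to the solution $x$ of \eqref{eq:addstab} and yields $\lim_{n\to\infty}x_n=0$ a.s., which is the desired conclusion. I do not expect any genuine obstacle here: the theorem is a packaging of two previously proved lemmas, and the only care needed is the bookkeeping of the side conditions (nonvanishing of $1+\sigma\xi_{i+1}$ and finiteness of its log-moment) across the three distribution families, which Remark~\ref{rem:welldefEln} already handles. If there is a subtle point at all, it is making sure that in case (ii) the same $\sigma<1$ can be chosen to satisfy \emph{both} the finiteness requirement and the inequality \eqref{est:H} simultaneously — but this is exactly the content of Lemma~\ref{lem:Eln123}(ii), so nothing further is needed.
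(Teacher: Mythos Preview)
Your proposal is correct and matches the paper's approach: the paper explicitly states that Theorem~\ref{thm:stab123} ``is a corollary of Lemmata~\ref{lem:genstab0} and~\ref{lem:Eln123}'' and gives no further argument. Your bookkeeping of the finiteness of $\mathbb E\ln|1+\sigma\xi_{i+1}|$ via Remark~\ref{rem:welldefEln} and the reduction to $H>1$ via Remark~\ref{rem:Hlambda} is exactly the intended glue.
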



\section{Destabilization of the zero equilibrium}
\label{sec:destab}

In this section we destabilize the zero equilibrium of equation \eqref{eq:mainintr} using a stochastic perturbation of the type
\begin{equation}
\label{eq:sdnoise}
x_{n+1}=x_n f(x_n)+\sigma(x_n) x_n \xi_{n+1}, \quad x_0>0, \quad n\in {\mathbb N}_0.
\end{equation} 
Here $\xi_n$ satisfies Assumption \ref{as:noise1} and $\sigma:\mathbb R\to [0, \infty)$. 
The function $\sigma$ is chosen for each  $f$ in a way which guarantees that  solution $x_n$ to equation \eqref{eq:sdnoise} with any initial value $x_0>0$ does not converge to zero with probability 1. 
 
 A general result on destabilization of the zero equilibrium is given below.
 \begin{lemma}
\label{lem:destab}
Suppose that Assumption \ref{as:fBound} holds, and $x$ is a solution to equation  \eqref{eq:sdnoise}. 
Let $\sigma:\mathbb R\to [0, \infty)$ and random variables $\xi_n$ be such that, for some $\alpha\in (0, 1]$  and  for each  $n\in {\mathbb N}_0$, a.s., $f(x_n)+\sigma(x_n) \xi_{n+1}\neq 0$ and 
\begin{equation}
\label{cond:destab22}
\mathbb E\left[ |f(x_n)+\sigma(x_n) \xi_{n+1}|^{-\alpha}\biggr| \mathcal F_n\right]<1.
\end{equation}
Then $\mathbb P\left\{\liminf\limits_{n\to \infty}|x_n|>0\right\}=1$.
\end{lemma}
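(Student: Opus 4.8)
The plan is to rewrite equation \eqref{eq:sdnoise} in a form where a product martingale naturally appears, and then invoke Lemma \ref{lem:nnM}. First I would set, for a solution $x$ with $x_0>0$, the ratio $x_{n+1}/x_n = f(x_n)+\sigma(x_n)\xi_{n+1}$, which is nonzero a.s. by hypothesis, so all $x_n$ are nonzero a.s. Then $|x_n|^{-\alpha} = |x_0|^{-\alpha}\prod_{i=0}^{n-1}|f(x_i)+\sigma(x_i)\xi_{i+1}|^{-\alpha}$. I would like the product to be (a supermartingale, hence bounded below by) a convergent sequence; the natural candidate is to define
\[
Y_{i+1} := \frac{|f(x_i)+\sigma(x_i)\xi_{i+1}|^{-\alpha}}{\mathbb E\!\left[|f(x_i)+\sigma(x_i)\xi_{i+1}|^{-\alpha}\,\big|\,\mathcal F_i\right]},
\]
which is $\mathcal F_{i+1}$-measurable, nonnegative, and satisfies $\mathbb E[Y_{i+1}\mid\mathcal F_i]=1$ (here I need the finiteness of $\mathbb E[Y_{i+1}]$, which follows from Assumption \ref{as:fBound} once I control the denominator away from zero — see below). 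By Lemma \ref{lem:martprod}, $M_n:=\prod_{i=1}^{n}Y_i$ is a nonnegative $\mathcal F_n$-martingale, so by Lemma \ref{lem:nnM} it converges a.s.\ to a finite limit.

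Next I would connect $M_n$ back to $x_n$. Writing $c_i:=\mathbb E[|f(x_i)+\sigma(x_i)\xi_{i+1}|^{-\alpha}\mid\mathcal F_i]$, condition \eqref{cond:destab22} gives $c_i<1$ a.s., so
\[
|x_n|^{-\alpha} = |x_0|^{-\alpha}\,M_n\,\prod_{i=0}^{n-1}c_i \le |x_0|^{-\alpha}\,M_n .
\]
Since $M_n$ converges a.s.\ to a finite random variable $M_\infty$, the sequence $|x_n|^{-\alpha}$ is a.s.\ bounded above: $\sup_n |x_n|^{-\alpha}<\infty$ a.s. This is exactly $\inf_n|x_n|>0$ a.s., and in particular $\liminf_{n\to\infty}|x_n|>0$ a.s., which is the claim.

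The main obstacle I anticipate is technical rather than conceptual: verifying that the integrability conditions i) and ii) of Lemma \ref{lem:martprod} actually hold, i.e.\ that $\mathbb E[Y_i]<\infty$. Condition \eqref{cond:destab22} says the conditional expectation $c_i$ is finite (indeed $<1$) a.s., so $Y_{i+1}$ is well-defined a.s.; and since $Y_{i+1}\ge 0$ with $\mathbb E[Y_{i+1}\mid\mathcal F_i]=1$ by construction, the tower property gives $\mathbb E[Y_{i+1}]=1<\infty$ automatically. One must be slightly careful that the denominator $c_i$ is strictly positive a.s.\ so that $Y_{i+1}$ is genuinely defined — but $|f(x_i)+\sigma(x_i)\xi_{i+1}|^{-\alpha}>0$ always (when the base is nonzero, which holds a.s.\ by hypothesis), so its conditional expectation is positive. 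A secondary subtlety is handling the a.s.\ exceptional null sets uniformly in $n$ when concluding $\sup_n|x_n|^{-\alpha}<\infty$; this is routine since a countable union of null sets is null. No further estimates on $f$, $\sigma$, or the distribution of $\xi$ are needed — the boundedness in Assumption \ref{as:fBound} is used only implicitly through the standing hypotheses, and the whole argument rests on the martingale machinery already set up in Lemmata \ref{lem:martprod} and \ref{lem:nnM}.
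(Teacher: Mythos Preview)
Your proof is correct and follows essentially the same approach as the paper: you introduce the same normalized product martingale $M_n=\prod Y_i$ with $Y_{i+1}=|f(x_i)+\sigma(x_i)\xi_{i+1}|^{-\alpha}/c_i$, invoke Lemmata~\ref{lem:martprod} and~\ref{lem:nnM}, and use $c_i<1$ to conclude. The only cosmetic difference is that you bound $|x_n|^{-\alpha}\le |x_0|^{-\alpha}M_n$ directly, whereas the paper writes the reciprocal identity $|x_{n+1}|^{\alpha}=|x_0|^{\alpha}\,M_n^{-1}\prod c_i^{-1}$ and argues that $M_n^{-1}$ stays bounded away from zero; your formulation is arguably a bit cleaner since it avoids the (slightly delicate) claim that $1/M_n$ has a nonzero limit.
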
 

Now we show  that each of the distributions defined by \eqref{def:xicontk}, \eqref{def:dud} and \eqref{def:contdud} 
can destabilize the equilibrium of 
\eqref{eq:sdnoise}
 when parameters of the distributions and $\sigma$ are  chosen appropriately and 
\begin{equation}
\label{est:sigmadest}
\sigma(x)>f(x), \quad \forall x\in \mathbb R.
\end{equation}
To this end, we prove that assumptions of Lemma \ref{lem:destab} are fulfilled for $\alpha\in (0, 1)$ 
in case  \eqref{def:xicontk} and $\alpha=1$ for \eqref{def:dud}, \eqref{def:contdud}. 
In all these cases  we have, a.s.,  $f(x_n)+\sigma(x_n) \xi_{n+1}\neq 0$, $\forall n\in \mathbb N$.

\begin{theorem}
\label{lem:Ealpha123}
Let Assumptions \ref{as:fBound} and \ref{as:noise1} hold.  
\begin{enumerate}
\item [(i)]  
Let $\xi$ be   defined by \eqref{def:xicontk} with any parameter  $s\in \mathbb N$. Then   there exists $\sigma:\mathbb R\to [0, \infty)$  such that
condition \eqref{cond:destab22} holds for some $\alpha=\alpha(s)\in (0, 1)$.

\item  [(ii)] Let $\alpha=1$ and $\xi$  be   defined by either \eqref{def:dud} or \eqref{def:contdud}. 
Then for any $l\in \mathbb N$  there exist $\sigma:\mathbb R\to [0, \infty)$ and, for \eqref{def:contdud}, $\delta>0$ such that 
condition \eqref{cond:destab22} holds. 
\end{enumerate}
\end{theorem}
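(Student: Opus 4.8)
The plan is to reduce the conditional inequality \eqref{cond:destab22} to a one-step analytic estimate and then verify it separately for each of the three laws \eqref{def:xicontk}, \eqref{def:dud}, \eqref{def:contdud}, taking $\sigma(\cdot)$ to be a single sufficiently large constant $\sigma_0$. Since $\xi_{n+1}$ is independent of $\mathcal F_n$ while $x_n$ is $\mathcal F_n$-measurable, the left-hand side of \eqref{cond:destab22} equals $G(x_n)$ with $G(y):=\mathbb E\big[\,|f(y)+\sigma_0\,\xi|^{-\alpha}\,\big]$ for a generic copy $\xi$ of the noise. By Assumption~\ref{as:fBound} it then suffices to exhibit $\alpha\in(0,1]$ and a constant $\sigma_0$ with
\[
\sup_{|a|<H}\ \mathbb E\big[\,|a+\sigma_0\,\xi|^{-\alpha}\,\big]<1 ,
\]
and, along the way, to check that $a+\sigma_0\xi\neq 0$ a.s. so that $G$ is finite; any such $\sigma_0>H$ also gives \eqref{est:sigmadest} since $f<H$. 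Then \eqref{cond:destab22} holds and the conclusion $\mathbb P\{\liminf_{n\to\infty}|x_n|>0\}=1$ follows from Lemma~\ref{lem:destab}.

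For part (i), with density \eqref{def:xicontk}, I would pull out $\sigma_0^{-\alpha}$, bound $x^{2s}\le 1$ on $[-1,1]$, and enlarge the interval after the shift $u=x+a/\sigma_0$ (legitimate once $\sigma_0>H$):
\[
\mathbb E\big[\,|a+\sigma_0\,\xi|^{-\alpha}\,\big]=\sigma_0^{-\alpha}\,\frac{2s+1}{2}\int_{-1}^{1}|x+a/\sigma_0|^{-\alpha}\,x^{2s}\,dx\ \le\ \sigma_0^{-\alpha}\,\frac{2s+1}{2}\int_{-2}^{2}|u|^{-\alpha}\,du\ =\ \sigma_0^{-\alpha}\,\frac{(2s+1)\,2^{1-\alpha}}{1-\alpha}.
\]
The last integral is finite precisely because $\alpha<1$, and this is exactly why $\alpha=1$ is excluded here: the density is strictly positive at $-a/\sigma_0$, so for $\alpha=1$ the expectation diverges (logarithmically) whenever $a\neq0$. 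Fixing any $\alpha\in(0,1)$ (kept fixed, say $\alpha=\tfrac12$, or allowed to depend on $s$) and then choosing $\sigma_0$ large enough that $\sigma_0^{-\alpha}(2s+1)2^{1-\alpha}/(1-\alpha)<1$ completes this case; finiteness of $G$ is automatic since $|u|^{-\alpha}$ is locally integrable for $\alpha<1$.

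For part (ii) I would take $\alpha=1$. For the discrete law \eqref{def:dud} the atoms are $v_i=-1+\tfrac{2i}{2l-1}$, $i=0,\dots,2l-1$, and the crucial arithmetic observation is that $v_i=0$ would force $i=(2l-1)/2\notin\mathbb N$, so $0$ is never an atom and $\min_i|v_i|=\tfrac1{2l-1}$. Hence, once $\sigma_0>(2l-1)(H+1)$, each $|a+\sigma_0 v_i|\ge\sigma_0|v_i|-|a|\ge\frac{\sigma_0}{2l-1}-H>0$, so $f(x_n)+\sigma_0\xi_{n+1}\neq0$ a.s. and
\[
\mathbb E\big[\,|a+\sigma_0\,\xi|^{-1}\,\big]=\frac1{2l}\sum_{i=0}^{2l-1}\frac1{|a+\sigma_0 v_i|}\ \le\ \frac{2l-1}{\sigma_0-(2l-1)H}\ <\ 1 .
\]
For the piecewise-constant law \eqref{def:contdud}, the density equals $\frac1{2\delta(2l-1)}$ on the set $A_{l,\delta}$ of \eqref{def:Aldelta}, whose total length is $2\delta(2l-1)$; since $\delta<\tfrac1{2l-1}$, every $x\in A_{l,\delta}$ satisfies $|x|\ge\tfrac1{2l-1}-\delta>0$, so $A_{l,\delta}$ stays off a fixed neighbourhood of $0$. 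Choosing $\sigma_0>\dfrac{H+1}{\frac1{2l-1}-\delta}$ then forces $-a/\sigma_0\notin A_{l,\delta}$ (so $G<\infty$), gives $|a+\sigma_0 x|\ge\sigma_0\big(\tfrac1{2l-1}-\delta\big)-H$ on $A_{l,\delta}$, and hence
\[
\mathbb E\big[\,|a+\sigma_0\,\xi|^{-1}\,\big]=\frac1{2\delta(2l-1)}\int_{A_{l,\delta}}\frac{dx}{|a+\sigma_0 x|}\ \le\ \frac1{\sigma_0\big(\tfrac1{2l-1}-\delta\big)-H}\ <\ 1 ;
\]
here any $\delta\in\big(0,\tfrac1{2l-1}\big)$ works, with $\sigma_0$ chosen afterwards in terms of $\delta,l,H$. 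In all cases $\sigma_0>H>f(x)$, so \eqref{est:sigmadest} holds and $\sigma(x)\equiv\sigma_0$ is admissible.

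The step I expect to be the main obstacle --- and the only place where the three laws genuinely differ --- is guaranteeing that $f(x_n)+\sigma(x_n)\xi_{n+1}\neq0$ a.s. together with finiteness of the conditional expectation, i.e.\ (conditional) integrability of a negative power of the perturbed increment near its zero set. For \eqref{def:xicontk} this is exactly what forces $\alpha<1$ (local integrability of $|u|^{-\alpha}$); for \eqref{def:dud} it rests on the parity fact that the atoms $-1+\tfrac{2i}{2l-1}$ avoid $0$ because $2l-1$ is odd; for \eqref{def:contdud} it rests on the separation $\delta<\tfrac1{2l-1}$. Once these points are settled, boundedness of $f$ makes every estimate uniform in $x$, a single large constant $\sigma_0$ suffices, and the remaining computations are elementary.
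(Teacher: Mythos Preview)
Your proof is correct, and it is a genuinely different, more elementary route than the paper's. You take $\sigma(\cdot)\equiv\sigma_0$ constant and use only the crude bounds $x^{2s}\le1$ on $[-1,1]$ in case~(i), and the single lower bound $|v|\ge\frac{1}{2l-1}$ (respectively $|v|\ge\frac{1}{2l-1}-\delta$) for all points in the support in case~(ii). This yields existence with essentially no computation beyond the triangle inequality and a one-line integral.

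The paper works harder in order to obtain sharper, \emph{explicit} thresholds for $\sigma(x)$. In case~(ii) with the discrete law it pairs the symmetric atoms $\pm\big(1-\tfrac{2i}{2l-1}\big)$ and uses the identity $\frac{1}{u+f}+\frac{1}{u-f}=\frac{2u}{u^2-f^2}$ to obtain the quantitative condition $\sigma(x)>(2l-1)\,\frac{1+\sqrt{1+4f^2(x)}}{2}$; in case~(i) it computes the integral more carefully (exactly for $s=0$, by parts for general $s$) to reach $\sigma(x)>\max\{f(x),\,e\}$; in the piecewise-continuous case it perturbs the discrete pairing via the Mean Value Theorem and a compactness/uniform-continuity argument for the auxiliary function $G(f,u,y,z)=\frac{u+y}{u^2-f^2+z}$. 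These sharper conditions are not idle refinements: they are precisely the hypotheses \eqref{ineq:sdud} and \eqref{cond:fe} invoked in Theorem~\ref{thm:destabtrunc} on the truncated noise, so the paper's proof of the present theorem simultaneously supplies the quantitative input needed later. Your argument proves the existence statement cleanly but would not deliver those bounds; conversely, the paper's approach is longer but does double duty.
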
 

\begin{remark}
\label{rem:cond2122}
It will be shown in  the proof of Theorem~\ref{lem:Ealpha123} (see Appendix) that, under natural assumptions,
for \eqref{def:dud} 
\begin{equation}
\label{ineq:sdud}
\sigma(x)> (2l-1)\left[\frac{1+\sqrt{1+4 f^2(x)}}{2}\right], \quad \forall x\in \mathbb R, 
\end{equation}
and for  \eqref{def:xicontk}  (with any $s\in \mathbb N$)
\begin{equation}
\label {cond:fe} 
\sigma(x)>\max \left\{f(x), e\right\}, \quad \forall x\in \mathbb R,
\end{equation}
imply condition \eqref{cond:destab22}. For \eqref{def:xicontk}, the function $\sigma(x)$ should be chosen continuous.
\end{remark}

To destabilize the equilibrium, there is no need to add the noise term to the equation everywhere, it is enough to apply a perturbation only
when the solution is in some  neighbourhood of the equilibrium.  We discuss  the  truncated version $\sigma_b$ of $\sigma$, 
which vanishes in 
$(b, \infty)$. 
Under some additional assumptions, we show that  a solution $x_n$ to \eqref{eq:addintr} with any initial value $x_0>0$, after a.s. finite number of steps, satisfies $x_n\ge b$.

Let  $\sigma$ be a noise coefficient constructed  in Lemma  \ref{lem:Ealpha123} and $b>0$, see \eqref{ineq:sdud} and \eqref{cond:fe}. Instead of applying  $\sigma(x)$  for each $x$,  we use a truncated coefficient
\begin{equation}
\label{def:modsigmal}
\sigma_{b}(x):=\left \{ 
  \begin{array}{cc}
 \sigma(x) ,& \quad x < b,\\\\
  0, & \quad x\ge b.
  \end{array}
\right.
\end{equation}

We will need an additional restriction for $f$.
Define 
\begin{equation}
\label{def:Fm}
\mbox{$F(x)=xf(x)$, for all $x\in \mathbb R$.}
\end{equation}
\begin{assumption}
\label{as:Fbd}
Let  Assumption \ref{as:fBound} hold, $F$  and  there exist $b, d$, $0<b<d$ such that
\begin{enumerate}
\item [(i)] $f(x)>0$ for $x\in(0, \infty)$  and $f$ is continuous on $[-b,0)\cup (0,b]$; 
\item [(ii)] $F:[b, d]\to [b, d]$, where $F$ be defined by  \eqref{def:Fm}.
\end{enumerate}
\end{assumption}

Instead of equation \eqref{eq:sdnoise}, we consider 
\begin{equation}
\label{eq:sdnoiseb-b}
x_{n+1}=\max\left\{x_n f(x_n)+\sigma_b(x_n) x_n \xi_{n+1}, \,\, -b\right\}, \quad x_0>0, \quad  n\in {\mathbb N}_0.
\end{equation} 

\begin{theorem}
\label {thm:destabtrunc} 
Let Assumptions 
\ref{as:noise1},\ref{as:Fbd} and \eqref  {est:sigmadest} hold, as well as one 
of the following conditions:

\begin{enumerate}
\item [(a)] $\xi$ is defined by \eqref{def:dud}, for some  $l\in \mathbb N$, and $\sigma$ satisfies  \eqref{ineq:sdud}; 

\item  [(b)] $\xi$ is defined by \eqref{def:xicontk}, for some  $s\in \mathbb N$, and $\sigma$ satisfies \eqref{cond:fe}.
\end{enumerate}
Let  also
\begin{equation}
\label{cond:Fbd}
d>\sup_{x\in [-b, b]} x\left[f(x)+ 
\sigma(x)\right]. 
\end{equation}
Then a solution of \eqref{eq:sdnoiseb-b} with any positive initial value $x_0\in (0, d)$ eventually reaches $[b, d]$  after an a.s. finite number of steps and stays there.
\end{theorem}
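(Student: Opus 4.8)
The plan is to split the phase line of \eqref{eq:sdnoiseb-b} into the candidate trap $[b,d]$ and its complement, to show that $[b,d]$ is absorbing and that the solution stays in $[-b,d]$, and then to prove that it cannot avoid $[b,d]$ forever. First I would observe that $[b,d]$ is absorbing: if $x_n\in[b,d]$ then $\sigma_b(x_n)=0$ by \eqref{def:modsigmal}, so \eqref{eq:sdnoiseb-b} reduces to $x_{n+1}=\max\{F(x_n),-b\}=F(x_n)\in[b,d]$ (since $F(x_n)\ge b>-b$) by Assumption~\ref{as:Fbd}(ii); hence once the solution reaches $[b,d]$ it stays there, and if $x_0\in[b,d)$ there is nothing to prove, so I may assume $x_0\in(0,b)$. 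Next, a straightforward induction gives $x_n\in[-b,d]$ for all $n$: the outer $\max$ keeps $x_n\ge-b$, and for $x_n\in[-b,b)$ (where $\sigma_b=\sigma$) condition \eqref{cond:Fbd}, together with $|\xi_{n+1}|\le1$, bounds the right-hand side of \eqref{eq:sdnoiseb-b} below $d$, so $x_{n+1}<d$; the case $x_n\in[b,d]$ is the previous step. In particular, as long as the solution has not reached $[b,d]$ it lies in $[-b,b)$; and one checks along the way that $x_n\neq0$ a.s., so the solution cannot get stuck at the origin.

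The second ingredient is a growth estimate. From \eqref{est:sigmadest}, Assumption~\ref{as:Fbd}(i) and the explicit bounds \eqref{ineq:sdud} (case (a)) or \eqref{cond:fe} (case (b)) I would extract $\lambda>1$ and a set $G^{+}\subset[-1,1]$ with $p:=\mathbb P\{\xi\in G^{+}\}>0$ — namely $G^{+}=\{1\}$ in case (a) and $G^{+}=(\xi_0,1]$ with a suitable $\xi_0<1$ in case (b) — such that $x\in(0,b)$ and $\xi_{n+1}\in G^{+}$ force $x_{n+1}=x[f(x)+\sigma(x)\xi_{n+1}]\ge\lambda x$. Together with $x_{n+1}<d$ this yields: starting from any $x\in(0,b)$, a run of at most $\lceil\log_{\lambda}(d/x)\rceil$ consecutive noise values in $G^{+}$ forces the solution to cross $b$, and since the value at that crossing is still $<d$, the solution lands in $[b,d)$. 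Symmetrically, using continuity of $f$ (and of $\sigma$ in case (b)) on $[-b,0)$ together with the atom of $\rho_l$ at $-1$ (case (a)) or the positive mass of $\phi_s$ near $-1$ (case (b)), one value of $\xi$ near $-1$ maps any $x\in[-b,0)$ into $(0,d)$ at a level at least $c_0|x|$ for a fixed $c_0\in(0,1]$ (here \eqref{est:sigmadest} and the boundedness of $f,\sigma$ on $[-b,b]$ are used).

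The crucial, and I expect hardest, step is to rule out the solution drifting to $0$. Let $A$ be the event that the solution never enters $[b,d]$; by the first paragraph, on $A$ we have $x_n\in[-b,b)$ for all $n$. By Theorem~\ref{lem:Ealpha123} (equivalently by \eqref{ineq:sdud}/\eqref{cond:fe}) the estimate \eqref{cond:destab22} holds for the chosen $\sigma$ at every value of its argument, so, as in the proof of Lemma~\ref{lem:destab} and using Lemmata~\ref{lem:martprod} and~\ref{lem:nnM}, $\prod_{i=1}^{n}|f(x_{i-1})+\sigma(x_{i-1})\xi_i|^{-\alpha}$ is a non-negative supermartingale converging a.s.\ to a finite limit. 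Since $\sigma$ is bounded on $[-b,b]$, there is $x^{*}>0$ such that $|x_n|<x^{*}$ makes the inner expression of \eqref{eq:sdnoiseb-b} have absolute value $<b$, so the outer $\max$ is inactive there and $|x_{n+1}|=|x_n|\,|f(x_n)+\sigma(x_n)\xi_{n+1}|$ exactly. Hence on $A\cap\{|x_n|<x^{*}\ \text{for all}\ n\ge N\}$ the sequence $|x_n|^{-\alpha}$ is, from time $N$ on, a constant multiple of the above convergent supermartingale, so $|x_n|$ tends to a finite limit, which is strictly positive because $|x_n|\le d$. Since $\{x_n\to0\}\subseteq\bigcup_N\{|x_n|<x^{*}\ \text{for all}\ n\ge N\}$, taking the union over $N$ gives $\mathbb P\{A\cap\{x_n\to0\}\}=0$; therefore $\limsup_n|x_n|>0$ a.s.\ on $A$.

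To conclude, fix $k\in\mathbb N$ and set $A_k:=A\cap\{\limsup_n|x_n|>1/k\}$, so that on $A_k$ there are infinitely many times $T$ with $|x_T|>1/k$. By the second paragraph there is a finite length $J=J(k)$ and, for each of the two signs of $x_T$, a block of $J$ noise values — if $x_T<0$, one value at $-1$ (case (a)) or near $-1$ (case (b)) to enter $(0,d)$, and then $\lceil\log_{\lambda}(dk/c_0)\rceil$ values in $G^{+}$, padded with further values in $G^{+}$ that are harmless once the trap is reached — whose realisation over steps $T+1,\dots,T+J$ drives the solution from $x_T$ into $[b,d)$; at each such $T$ the block matching the $\mathcal F_T$-measurable sign of $x_T$ has conditional probability at least $q(k)>0$ given $\mathcal F_T$. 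Applying a Borel--Cantelli argument along a subsequence of these times $T$ spaced at least $J$ apart (so the corresponding noise blocks are disjoint and each has conditional success probability $\ge q(k)$), a.s.\ one of the blocks is realised; but then the solution enters $[b,d]$, contradicting $A_k$. Hence $\mathbb P\{A_k\}=0$ for every $k$, so $\mathbb P\{A\}=0$, which is the claim. The two delicate points I anticipate are the interference of the outer truncation at $-b$ with the supermartingale of Lemma~\ref{lem:destab}, which I would handle by localising near $0$ (where the truncation is inactive), and the fact that $f$ is assumed positive only on $(0,\infty)$, which forces the rescue block above to be chosen so as to work uniformly over both signs of $x_n$ in $[-b,b)$.
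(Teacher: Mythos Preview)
Your plan is essentially the paper's own route: show $[b,d]$ is absorbing and all iterates stay in $[-b,d]$, use the martingale of Lemma~\ref{lem:destab} (via Theorem~\ref{lem:Ealpha123}) to rule out $x_n\to 0$ on the event $A$ that the trap is never reached, and then a Borel--Cantelli/Lemma~\ref{lem:topor} block argument to force entry into $[b,d]$. The paper argues by contradiction on a set $\Omega_1$ of positive measure, extracts a \emph{nonrandom} lower bound $c>0$ on a further subset $\Omega_2$, and applies Lemma~\ref{lem:topor} to a single fixed block pattern; you instead decompose $A=\bigcup_k A_k$ with $\limsup|x_n|>1/k$ and run a conditional Borel--Cantelli along random times $T$. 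Both are valid and equivalent in spirit; your localisation near $0$ to neutralise the $\max\{\cdot,-b\}$ truncation is a point the paper's proof leaves implicit, so you are if anything more careful there.

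There is one genuine slip to repair. In case~(a) you claim a \emph{uniform} growth factor $\lambda>1$ on all of $(0,b)$ with $G^{+}=\{1\}$. From \eqref{ineq:sdud} one only gets $f(x)+\sigma(x)>f(x)+(2l-1)\tfrac{1+\sqrt{1+4f^2(x)}}{2}$, and for $l=1$ this lower bound tends to $1$ as $f(x)\to 0^{+}$; since nothing in the hypotheses prevents $\inf_{(0,b)}f=0$ or forces $\sigma-(2l-1)\tfrac{1+\sqrt{1+4f^2}}{2}$ to be bounded away from $0$, a global $\lambda>1$ need not exist. The paper avoids this by \emph{first} using the martingale to obtain $|x_n|\ge c$ and only then defining $a(c,b)=\inf_{(c,b)}f>0$ (positive by continuity and positivity of $f$ on $(0,b]$, Assumption~\ref{as:Fbd}(i)), which yields the growth factor $Q(c,b)>1$ on $(c,b)$. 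In your framework the fix is cosmetic: make $\lambda=\lambda(k)>1$ (and likewise your sign-flip constant $c_0=c_0(k)$) depend on $k$, defined as infima over $[1/k,b]$ and $[-b,-1/k]$; your block length $J(k)$ then absorbs this dependence, and the rest of your argument goes through unchanged. In case~(b) your uniform $\lambda$ is fine, since $\sigma>e$ gives $f+\sigma\xi>e\xi_0>1$ for $\xi_0\in(1/e,1)$.
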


\begin{remark} 
\label{rem:delta}
We can prove the result of Theorem \ref{thm:destabtrunc}  when $\xi$ has distribution \eqref{def:contdud}, but, to save   space, we do not include it in the paper.
\end{remark}


\section{Stabilization and destabilization of a positive equilibrium $K$}
\label {sec:stabdestabK}


\subsection{Assumptions and shift}
\label{subsec:Kto0}

In this section we assume that,  in addition to zero, equation \eqref{eq:mainintr} also has a positive equilibrium $K>0$. 
To deal with this case, we need an additional assumption on $f$.
\begin{assumption}
\label{as:fK}
Assume that $f$ satisfies Assumption \ref{as:fBound}, $f(K)=1$, $f$ has a derivative $f'(K)$ at $K$. 
\end {assumption}

By Assumption~\ref{as:fK}, the function ${\rm f}:\mathbb R\to \mathbb R$ defined as

\begin{equation}
\label{def:rmf1}
{\rm f}(u)=\left\{\begin{array}{cc} {\displaystyle \frac{(u+K)f(u+K)-K}{u}},&  u\neq 0,\\\\
  Kf'(K)+1, &  \mbox{for} \quad u=0,
  \end{array}
  \right.
  \end{equation}
is continuous at $u=0$, since $f(K)=1$ and ~~
$\displaystyle
 \frac{(u+K)f(u+K)-K}{u}- Kf'(K) -1$ \\ $\displaystyle =f(u+K)-f(K)+K\left[ \frac{f(u+K)-f(K)}{u} -f'(K) \right] \to 0$
as $u\to 0$. 
Also, $\rm f$ is bounded: $|{\rm f}(x)| < {\mathcal H}$ for $x \in \mathbb R$, where 
\begin{equation}
\label{def:calH}
\mathcal H \le \max \left\{ \max_{|x-K|\leq 1} \left| \frac{xf(x)-K}{x-K} \right|, H+K(H+1) \right\}.
\end{equation}
In fact, for $|x-K|=|u|>1$, 
$$
\left| \frac{(u+K)f(u+K)-K}{u} \right|  \leq |f(u+K)| +K |f(u+K)-1| \leq H+K(H+1),
$$
which leads to the estimate in \eqref{def:calH}.

\begin{remark}
\label{rem:K}
Estimate \eqref{def:calH} is not optimal, for 
some $f$ sharper estimates of $\displaystyle \sup_{u\in (-K, \infty)}|{\rm f}(u)|$ can be obtained, 
due to  $H+K(H+1)$ in the bound. For the Ricker map with $f(x)=e^{r(1-x)}$ we have $H=1$, with $r=1$ the actual bound is 
$\mathcal H=1$, while  \eqref{def:calH} leads to $\mathcal H \leq 2H+1=3$. For $r=3$, we have 
the actual bound $\mathcal H \approx 2.4925$, which is less than $2H+1=3$.
\end{remark}

Thus, instead of dealing with the equilibrium $K$ for equation \eqref{eq:mainintr}, we stabilize (or destabilize) for $z_n=x_n-K$
the zero equilibrium of the equation 
\begin{equation}
\label{eq:oeqv}
z_{n+1}=z_n\rm f(z_n), \quad n\in {\mathbb N}_0.
\end{equation}
\subsection{Stabilization and destabilization}
\label{subsec:stabK1}

We start with stabilization and assume that  the equilibrium $K$ of \eqref{eq:mainintr} (and therefore 
the zero equilibrium of \eqref{eq:oeqv})  is unstable. 


We can stabilize the zero equilibrium of \eqref{eq:oeqv}
multiplying the right-hand side by $1+\sigma\xi_{n+1}$, which leads to the equation 
\begin{equation}
\label{eq:addstabK}
z_{n+1}=(1+\sigma\xi_{n+1})z_n\rm f(z_n), \quad z_0=x_0-K, ~~n\in {\mathbb N}_0.
\end{equation}
Returning to $f$ and $x_n$, with $x_n=z_n+K$, we arrive at
\begin{equation}
\label{eq:stabK}
x_{n+1}=x_nf(x_n)+\sigma \left[x_nf(x_n)-K \right]\xi_{n+1}, ~~n\in {\mathbb N}_0.
\end{equation}
\begin{remark}
\label{rem:Krmf}
Equation \eqref{eq:stabK} is a special case of \eqref{eq:addintr} with $g(x)=xf(x)-K$.
\end{remark}

Lemma \ref{lem:genstab0} and  Theorem \ref{thm:stab123} immediately lead to the following results.
\begin{lemma}
\label{cor:Kstabgen}
Let Assumptions  \ref{as:noise1} and \ref{as:fK} hold, $\mathcal H$ be defined by \eqref{def:calH}, and there exist  $\sigma>0$ such that
\begin{equation}
\label{cond:stabK}
\ln \mathcal H<-\mathbb E \ln |1+\sigma \xi_{n+1}|.
\end{equation}
Let $x$ be a solution to 
\eqref{eq:stabK} with $\sigma$  and $\xi_n$ satisfying  \eqref{cond:stabK}. 
Then,  $\lim\limits_{n\to \infty}x_n~=~K$ a.s.
\end{lemma}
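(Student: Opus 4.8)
The plan is to deduce Lemma~\ref{cor:Kstabgen} from Lemma~\ref{lem:genstab0} via the change of variables $z_n = x_n - K$, which reduces the equilibrium $K$ of \eqref{eq:mainintr} to the zero equilibrium of \eqref{eq:oeqv}. First I would record the algebraic identity $z\,\mathrm{f}(z) = (z+K)f(z+K) - K$ for every $z\in\mathbb R$ (including $z=0$, where both sides vanish), with $\mathrm{f}$ as in \eqref{def:rmf1}. Substituting $x_n = z_n + K$ into \eqref{eq:stabK} and using this identity turns \eqref{eq:stabK} into exactly \eqref{eq:addstabK}, namely $z_{n+1} = (1+\sigma\xi_{n+1}) z_n\,\mathrm{f}(z_n)$, which is equation \eqref{eq:addstab} with $f$ replaced by $\mathrm{f}$.

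Next I would check that $\mathrm{f}$ satisfies Assumption~\ref{as:fBound} with the constant $\mathcal H$ from \eqref{def:calH}: piecewise continuity of $\mathrm{f}$ on $\mathbb R\setminus\{0\}$ is inherited from that of $f$, continuity at $u=0$ is the limit computation displayed just before the statement (which uses $f(K)=1$ and the existence of $f'(K)$, both from Assumption~\ref{as:fK}), and the uniform bound $|\mathrm{f}(u)| < \mathcal H$ is precisely \eqref{def:calH}. I would also verify the noise hypotheses of Lemma~\ref{lem:genstab0}: that $1+\sigma\xi_{n+1}\neq 0$ a.s. and $\mathbb E\ln|1+\sigma\xi_{n+1}|$ is finite, which hold by Assumption~\ref{as:noise1} together with Remark~\ref{rem:welldefEln} (taking $\sigma<1$ in the discrete case); and that condition \eqref{cond:stabK} is exactly condition \eqref{est:H} for $\mathrm{f}$, with $\eta = -\mathbb E\ln|1+\sigma\xi_{n+1}|$ and $\mathcal H$ playing the role of $H$. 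Since we may assume $\mathcal H\ge 1$ (otherwise the zero equilibrium of \eqref{eq:oeqv} is already globally asymptotically stable and no stabilization is needed, cf.\ Remark~\ref{rem:Hlambda}), \eqref{cond:stabK} also forces $\eta > \ln\mathcal H \ge 0$, so \eqref{def:Eln1} holds.

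Finally, I would apply Lemma~\ref{lem:genstab0} to equation \eqref{eq:addstabK}: for any $z_0 = x_0 - K$ the solution satisfies $\lim_{n\to\infty} z_n = 0$ a.s., hence $x_n = z_n + K \to K$ a.s., which is the claim. There is essentially no analytic obstacle here, since the Law-of-Large-Numbers machinery is already packaged inside Lemma~\ref{lem:genstab0} and the system is posed on all of $\mathbb R$ (no domain issues); the only point requiring care is the bookkeeping that transfers boundedness and nondegeneracy of the noise from $f$ to $\mathrm{f}$, and this is handled by \eqref{def:calH} and Remark~\ref{rem:welldefEln}.
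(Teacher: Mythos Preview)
Your proposal is correct and follows exactly the paper's approach: the paper states that Lemma~\ref{cor:Kstabgen} is an immediate consequence of Lemma~\ref{lem:genstab0} via the shift $z_n=x_n-K$ and the construction of $\mathrm{f}$ in Section~\ref{subsec:Kto0}, and you have simply unpacked that reduction with the appropriate bookkeeping on boundedness and the noise hypotheses.
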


\begin{theorem}
\label{cor:Kstab123}
Let Assumptions 
\ref{as:noise1}  and \ref{as:fK}  hold, $x$ be a solution to  \eqref{eq:stabK} with any initial value $x_0>0$, 
and  one of the following conditions hold: 
\begin{enumerate}
\item [(i)]  $\xi$ is  defined by \eqref{def:xicontk}, $\sigma=1$ and $s$ from \eqref{def:xicontk} is big enough;
\item  [(ii)] $\xi$ is  defined by either \eqref{def:dud} or \eqref{def:contdud}, $\sigma$ is close enough to 1,
and, in the case of \eqref{def:contdud}, $\delta>0$ is small enough. 
\end{enumerate}
Then $\lim\limits_{n\to \infty}x_n=K$ a.s.
\end{theorem}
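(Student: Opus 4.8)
The plan is to reduce Theorem~\ref{cor:Kstab123} to the already-established results about the zero equilibrium, by exploiting the shift $z_n = x_n - K$ introduced in Section~\ref{subsec:Kto0}. First I would note that by Assumption~\ref{as:fK}, the function $\mathrm{f}$ defined in \eqref{def:rmf1} is continuous at $u=0$ and bounded with $|\mathrm{f}(x)| < \mathcal H$ for all $x$, where $\mathcal H$ is given by \eqref{def:calH}. Thus $\mathrm{f}$ plays exactly the role of $f$ in Section~\ref{sec:stab}, with $\mathcal H$ in place of $H$, and equation \eqref{eq:addstabK} for $z_n$ is precisely an instance of equation \eqref{eq:addstab} with the bounded function $\mathrm{f}$. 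The equivalence of \eqref{eq:addstabK} and \eqref{eq:stabK} under $x_n = z_n + K$ is a direct algebraic substitution that I would only mention, not belabor.

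The core of the argument is then to verify that the hypotheses of Theorem~\ref{thm:stab123} (equivalently, of Lemma~\ref{lem:genstab0} together with Lemma~\ref{lem:Eln123}) hold for equation \eqref{eq:addstabK} with the bound $\mathcal H$. In case (i), with $\xi$ distributed as in \eqref{def:xicontk} and $\sigma=1$, Lemma~\ref{lem:Eln123}(i) guarantees that for the given $\mathcal H$ there exists $s \in \mathbb N_0$ with $\ln \mathcal H < \eta = -\mathbb E \ln|1+\xi_{n+1}|$; ``$s$ big enough'' is exactly this choice. In case (ii), with $\xi$ distributed as in \eqref{def:dud} or \eqref{def:contdud}, Lemma~\ref{lem:Eln123}(ii) yields $\sigma \in (0,1)$ close enough to $1$ (and, for \eqref{def:contdud}, $\delta$ small enough) so that $\ln \mathcal H < -\mathbb E \ln|1+\sigma\xi_{n+1}|$; note that since $\sigma < 1$, the value $\mathbb E \ln|1+\sigma\xi_{n+1}|$ is finite even in the discrete case where $\xi = -1$ with positive probability, as in Remark~\ref{rem:welldefEln}. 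In either case condition \eqref{cond:stabK} of Lemma~\ref{cor:Kstabgen} is satisfied, hence by Lemma~\ref{cor:Kstabgen} (or directly by applying Lemma~\ref{lem:genstab0} to \eqref{eq:addstabK}) we get $\lim_{n\to\infty} z_n = 0$ a.s., and therefore $\lim_{n\to\infty} x_n = K$ a.s.

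One point that needs a little care is the initial condition: Lemma~\ref{lem:genstab0} is stated for $z_0 > 0$, whereas here $z_0 = x_0 - K$ can be negative for $x_0 \in (0, K)$. I would address this by observing that the proof of stabilization via Kolmogorov's Law of Large Numbers (the mechanism behind Lemma~\ref{lem:genstab0}) only uses $|z_{n+1}| = |1+\sigma\xi_{n+1}|\,|\mathrm{f}(z_n)|\,|z_n| \le |1+\sigma\xi_{n+1}|\,\mathcal H\,|z_n|$ and iterates the logarithm, so it applies verbatim to any real $z_0$; alternatively one notes that $x_0 > 0$ gives $z_0 > -K$ and the bound on $\mathrm{f}$ holds on all of $(-K, \infty)$ (indeed on all of $\mathbb R$). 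This is the only genuine obstacle, and it is a minor one; the rest is bookkeeping. I would therefore expect the main ``work'' of the proof to be simply assembling the citations to \eqref{def:rmf1}, \eqref{def:calH}, Lemma~\ref{lem:Eln123}, and Lemma~\ref{cor:Kstabgen} in the right order, and the proof to be short.
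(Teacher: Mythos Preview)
Your proposal is correct and matches the paper's approach exactly: the paper states that Lemma~\ref{cor:Kstabgen} and Theorem~\ref{cor:Kstab123} follow immediately from Lemma~\ref{lem:genstab0} and Theorem~\ref{thm:stab123} via the shift $z_n=x_n-K$ and the bounded function $\mathrm f$ with bound $\mathcal H$, without giving any further details. Your remark about the sign of $z_0$ is a valid observation that the paper does not make explicit, and your resolution (that the proof of Lemma~\ref{lem:genstab0} only uses $|z_n|$ and the bound on $\mathrm f$, hence works for any real initial value) is correct.
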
 

Further, following the approach of Sections  \ref{sec:destab} and \ref{subsec:Kto0} for  the function ${\rm f}$ defined by \eqref{def:rmf1} and anyone of  distributions \eqref{def:xicontk}, \eqref{def:dud} or \eqref{def:contdud}, we construct the function $\sigma(z)$.  The  control equation which destabilizes a stable equilibrium $K$ with a stochastic perturbation is 
\begin{equation}
\label{eq:destabK_1}
z_{n+1}=z_n \left[{\rm f}(z_n) + \sigma (z_n) \xi_{n+1} \right], ~~n\in {\mathbb N}_0,
\end{equation}
where ${\rm f}$ is defined by \eqref{def:rmf1}.

Returning to $f$ and $x_n$, with $x_n=z_n+K$, we arrive at ~
$
x_{n+1}-K$ \\$=(x_n-K)({\rm f}(x_n-K)+\sigma (x_n-K)\xi_{n+1})=x_nf(x_n)-K+(x_n-K)\sigma (x_n-K)\xi_{n+1}$,
or 
\begin{equation}
\label{eq:destabK}
x_{n+1}=x_nf(x_n)+ (x_n-K)\sigma (x_n-K)\xi_{n+1}, ~~n\in {\mathbb N}_0.
\end{equation}
Similarly to stabilization, we can formulate results on destabilization of the equilibrium $K$ for equation \eqref{eq:destabK}.
In particular, Lemma~\ref{lem:destab} implies the following result.

 \begin{lemma}
\label{lem:destab_K}
Let Assumptions \ref{as:noise1} and \ref{as:fK} hold, $x$ be a solution to equation  \eqref{eq:destabK}, 
$\sigma:\mathbb R\to [0, \infty)$ and random variables $\xi_n$ be such that, for some $\alpha\in (0, 1]$  and  for each  $n\in \mathbb N$, a.s., $
{\rm f}(x_n-K)+\sigma(x_n-K) \xi_{n+1}\neq 0$  and 
\begin{equation}
\label{cond:destab_22K}
\mathbb E\left[ |{\rm f}(x_n-K)+\sigma(x_n-K) \xi_{n+1}|^{-\alpha}\biggr| \mathcal F_n\right]<1.
\end{equation}
Then $\displaystyle \mathbb P\left\{\liminf_{n\to \infty}|x_n-K|>0\right\}=1$.
\end{lemma}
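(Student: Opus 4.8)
The plan is to reduce Lemma~\ref{lem:destab_K} to the already-established Lemma~\ref{lem:destab} by applying the latter to the shifted equation~\eqref{eq:oeqv}--\eqref{eq:destabK_1}. First I would set $z_n = x_n - K$ and observe that, under Assumption~\ref{as:fK}, the function ${\rm f}$ from~\eqref{def:rmf1} is continuous at $u=0$ and uniformly bounded by $\mathcal H$ (as shown in~\eqref{def:calH}), so ${\rm f}$ satisfies Assumption~\ref{as:fBound} with $H$ replaced by $\mathcal H$. Then $z_n$ solves equation~\eqref{eq:destabK_1}, which has exactly the form of~\eqref{eq:sdnoise} with $f$ replaced by ${\rm f}$ and the same noise coefficient $\sigma$. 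The hypothesis~\eqref{cond:destab_22K} is precisely condition~\eqref{cond:destab22} for this shifted system, and the hypothesis that ${\rm f}(x_n-K)+\sigma(x_n-K)\xi_{n+1}\neq 0$ a.s. is the corresponding nonvanishing condition.

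Having checked that all hypotheses of Lemma~\ref{lem:destab} hold for the sequence $(z_n)$, I would invoke that lemma to conclude
\[
\mathbb P\left\{\liminf_{n\to\infty}|z_n|>0\right\}=1.
\]
Translating back via $z_n = x_n - K$ gives $\mathbb P\left\{\liminf_{n\to\infty}|x_n-K|>0\right\}=1$, which is the desired conclusion. The only mildly delicate point is to make sure the filtration and i.i.d.\ structure carry over unchanged: since $z_n$ is an $\mathcal F_n$-adapted transformation of $x_n$ and the noise $\xi_n$ and $\sigma$-algebras $\mathcal F_n$ are the same, the conditional expectation in~\eqref{cond:destab_22K} is genuinely the conditional expectation appearing in Lemma~\ref{lem:destab}, so no subtlety arises there.

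I do not expect any real obstacle: the statement is designed as a corollary, and essentially all the work was done in establishing Lemma~\ref{lem:destab} and in verifying (in Section~\ref{subsec:Kto0}) that the shift $u\mapsto {\rm f}(u)$ preserves boundedness and continuity at the equilibrium. The one step requiring a sentence of care is confirming that ${\rm f}$ inherits Assumption~\ref{as:fBound}; this is already contained in the displayed computation preceding~\eqref{def:calH}, so it can simply be cited. Thus the proof is a short ``apply Lemma~\ref{lem:destab} to~\eqref{eq:destabK_1} and undo the shift'' argument.
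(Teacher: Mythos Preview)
Your proposal is correct and matches the paper's approach exactly: the paper states that Lemma~\ref{lem:destab_K} follows directly from Lemma~\ref{lem:destab} via the shift $z_n=x_n-K$ introduced in Section~\ref{subsec:Kto0}, and gives no further argument. Your write-up simply makes this implication explicit, and the points you flag (boundedness of ${\rm f}$, adaptedness of $z_n$) are precisely the ones the paper handles in the discussion leading to~\eqref{def:calH}.
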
 

The results of Lemma~\ref{lem:Ealpha123} can be applied for $\rm f$ without any changes.

\section{Examples and simulations}
\label{sec:sim}

For stabilization of zero, we consider the equation
\[
x_{n+1}=(1+\sigma \xi_{n+1} )F(x_n)= (1+\sigma \xi_{n+1} )x_nf(x_n).
\]
\begin{example}
\label{ex:forlena4}
\be
\item [(a)] Consider continuous distribution \eqref{def:xicontk} of $\xi$ 
for Ricker's model. Here $f(x)=e^{r(1-x)}$, $H=\max{f(x)}=e^r$. In order to get global stability, we should have 
\[
He^{\mathbb E\ln (1+\xi_{i+1})}<1, \quad
r\le -\mathbb E\ln (1+\xi_{i+1})=\frac{1}{2s+1}+\frac{1}{2s-1}+\dots +1-\ln 2.
\]
For $s=3$, the bound for $r$ is 
$\displaystyle 
\left| \frac{1}7+\frac 15 +\frac 13+1-\ln 2 \right| \approx 
0.983
$, see Fig.~\ref{figure26}.

\begin{figure}[ht]
\centering
\vspace{-25mm}
\includegraphics[height=.3\textheight]{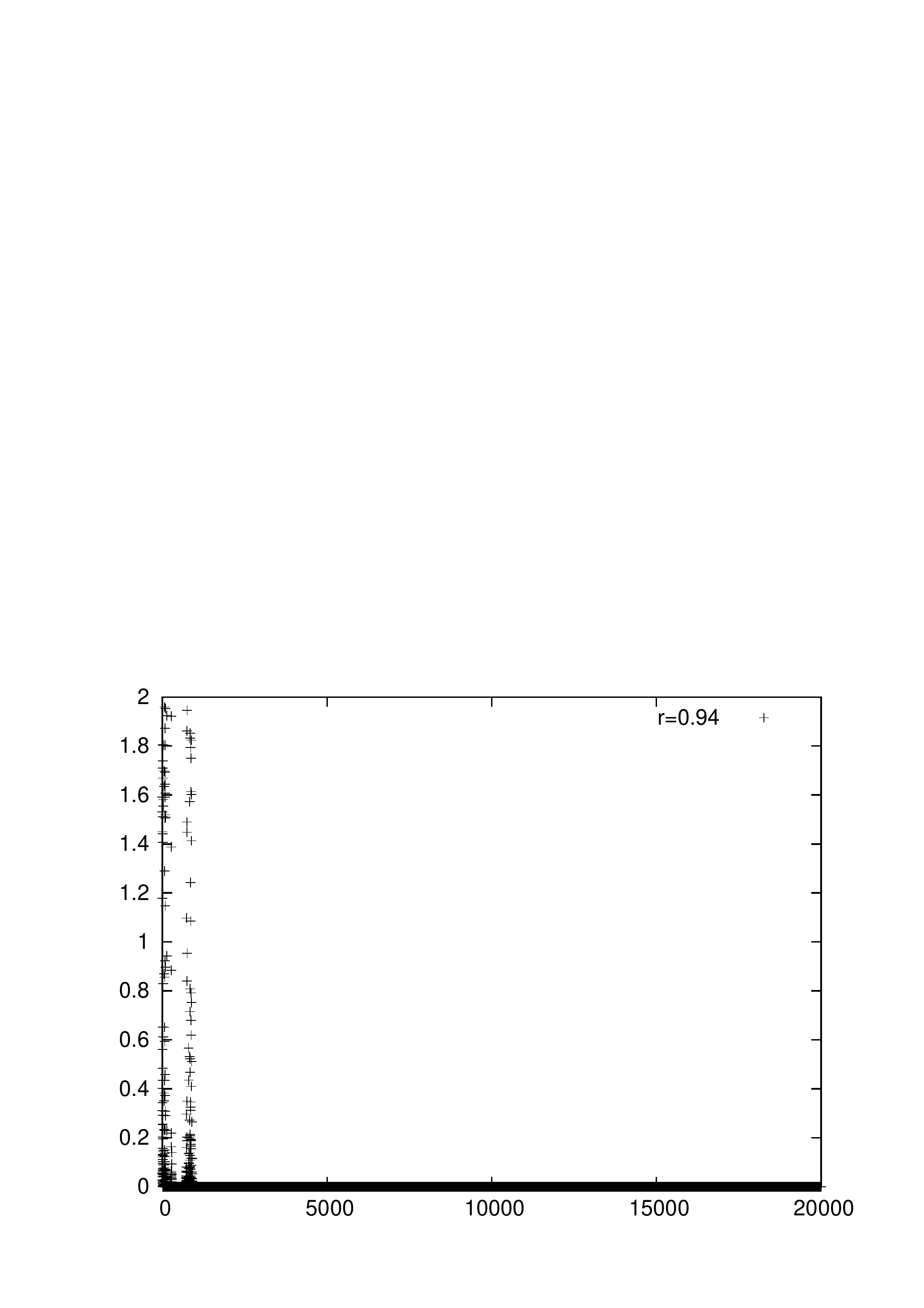}
\includegraphics[height=.3\textheight]{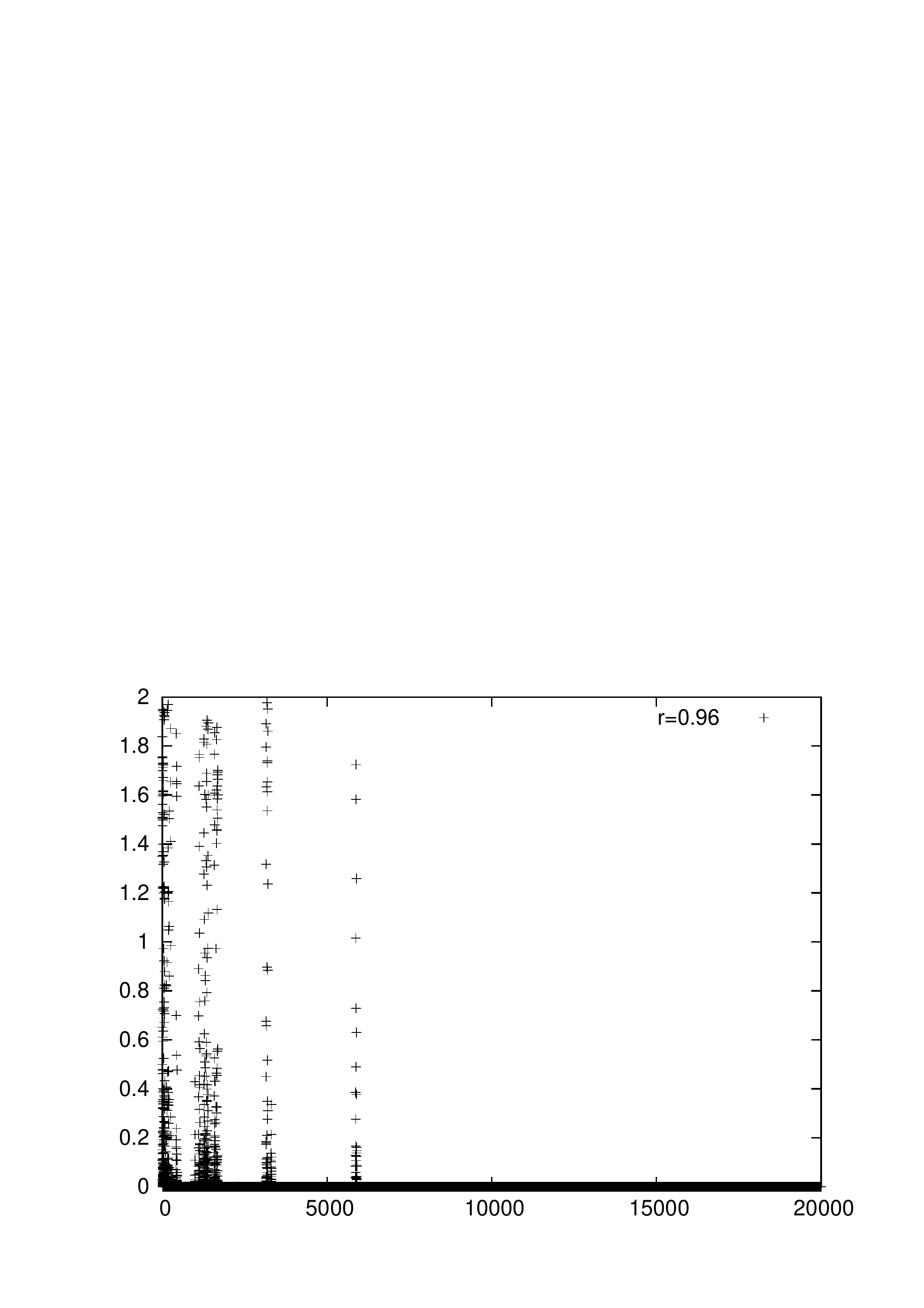}
\includegraphics[height=.3\textheight]{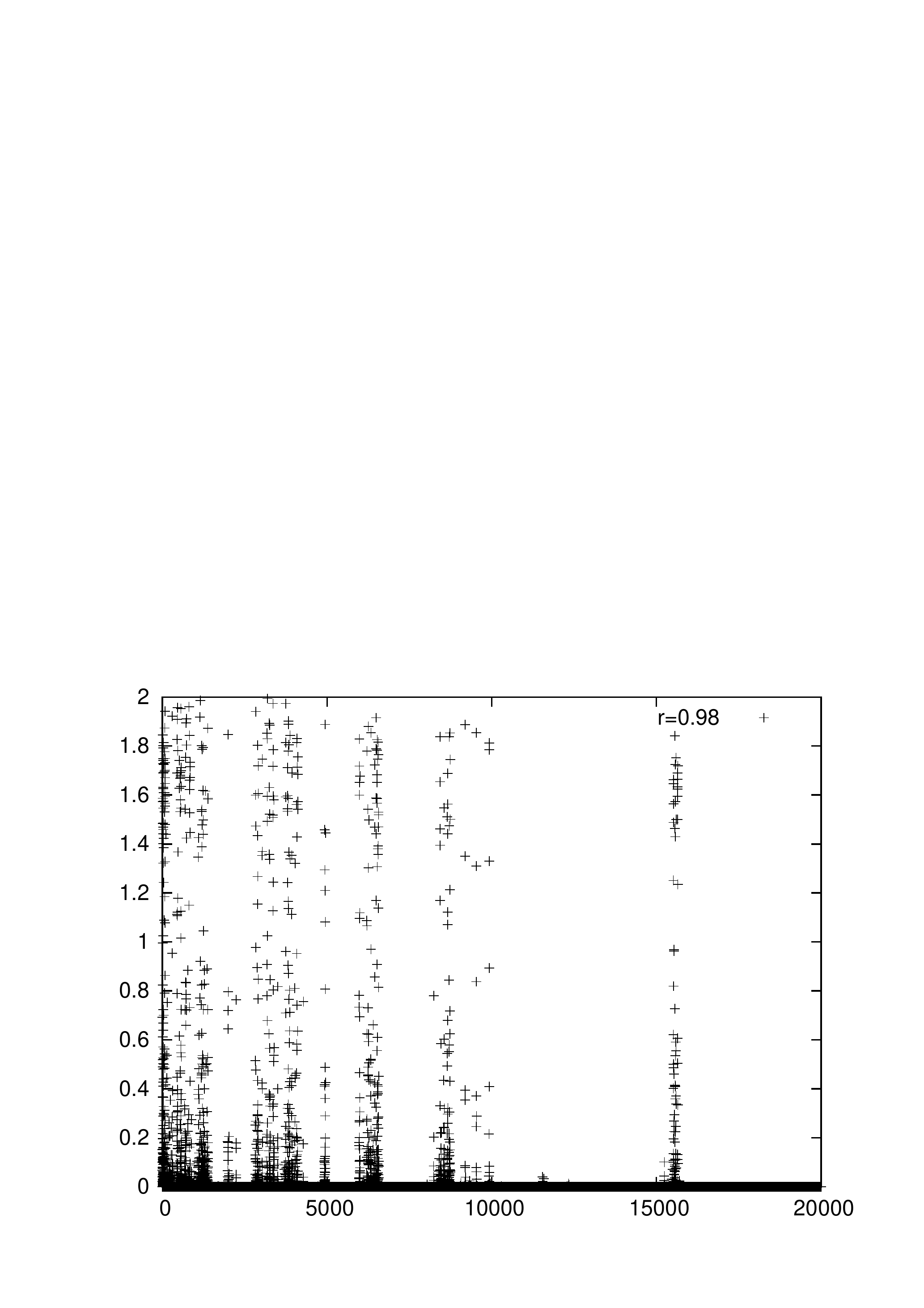}
\caption{Five runs for the Ricker equation and noise \eqref{def:xicontk} with  $s=3$, $\sigma=1$, $x_0=0.5$ and (from left to right) 
$r=0.94,0.96,0.98$.}
\label{figure26}
\end{figure}

\item [(b)] Consider discrete uniform distribution \eqref{def:dud}. The zero equilibrium of the logistic map for $r=2$ is 
unstable but can be stabilized for  $\sigma=1$,
however, in Fig.~\ref{figure4} we explore how for a smaller $\sigma=0.865$ stabilization depends on the choice of $l$.

\begin{figure}[ht]
\centering
\vspace{-25mm}
\includegraphics[height=.3\textheight]{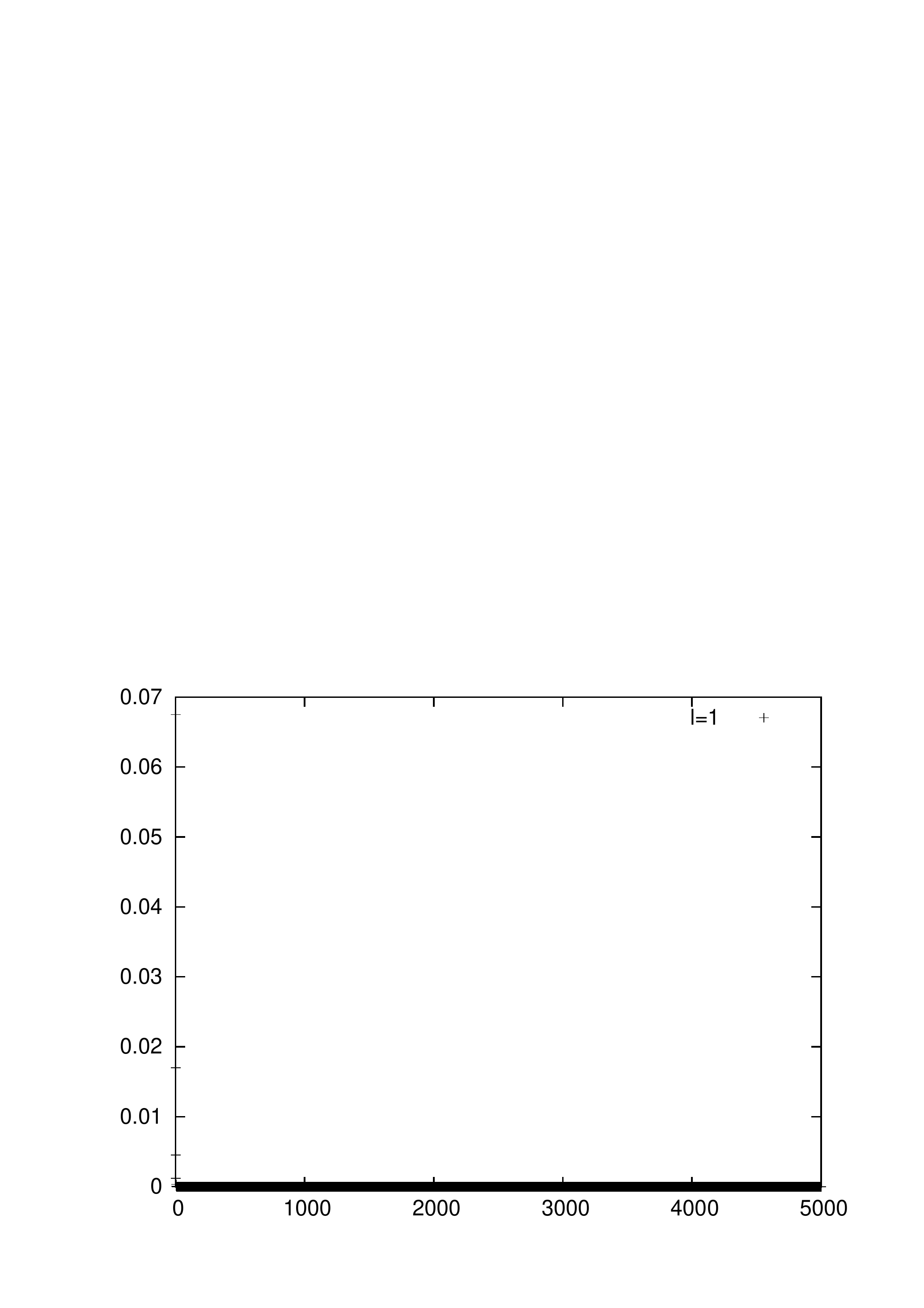}
\includegraphics[height=.3\textheight]{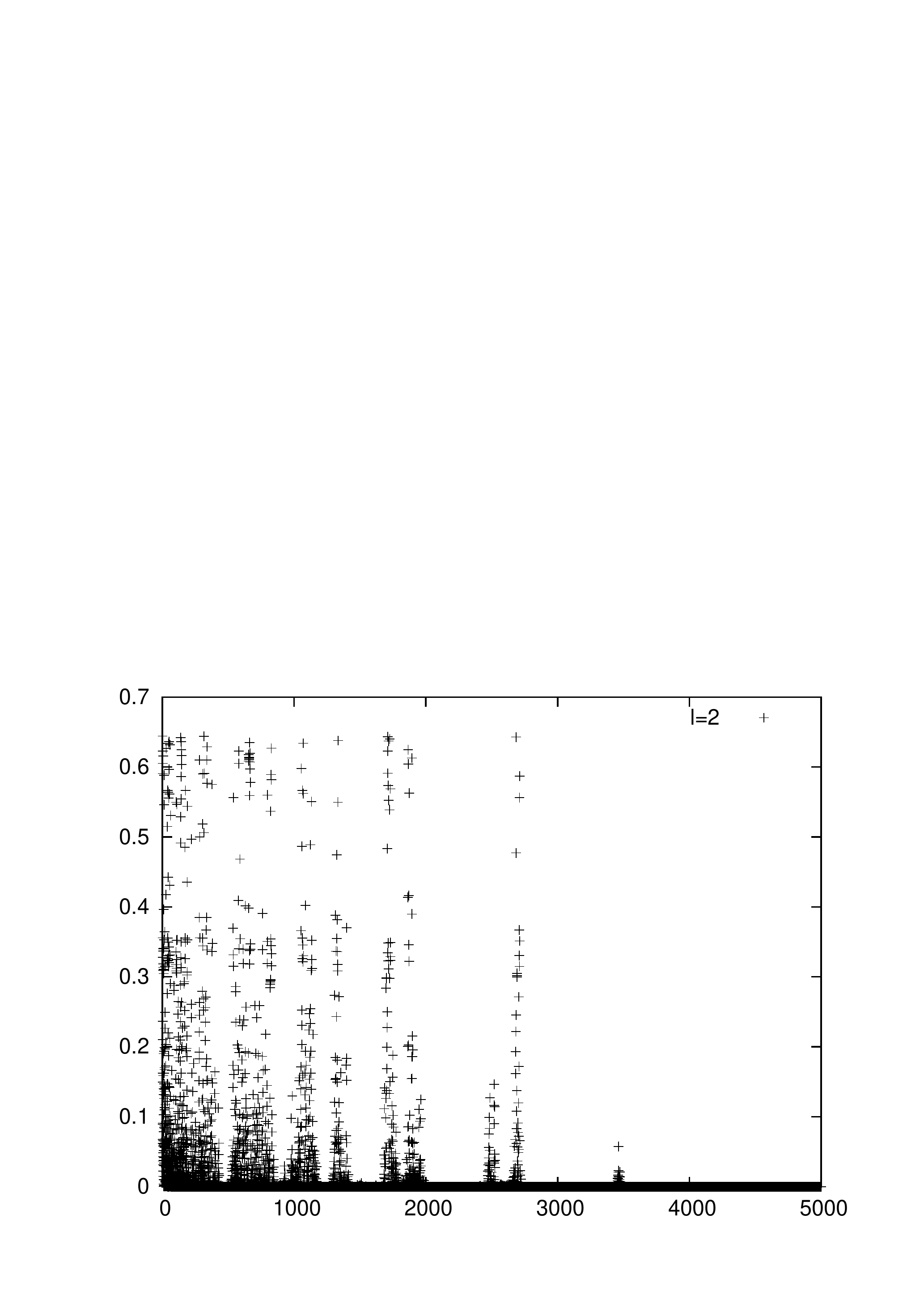}
\includegraphics[height=.3\textheight]{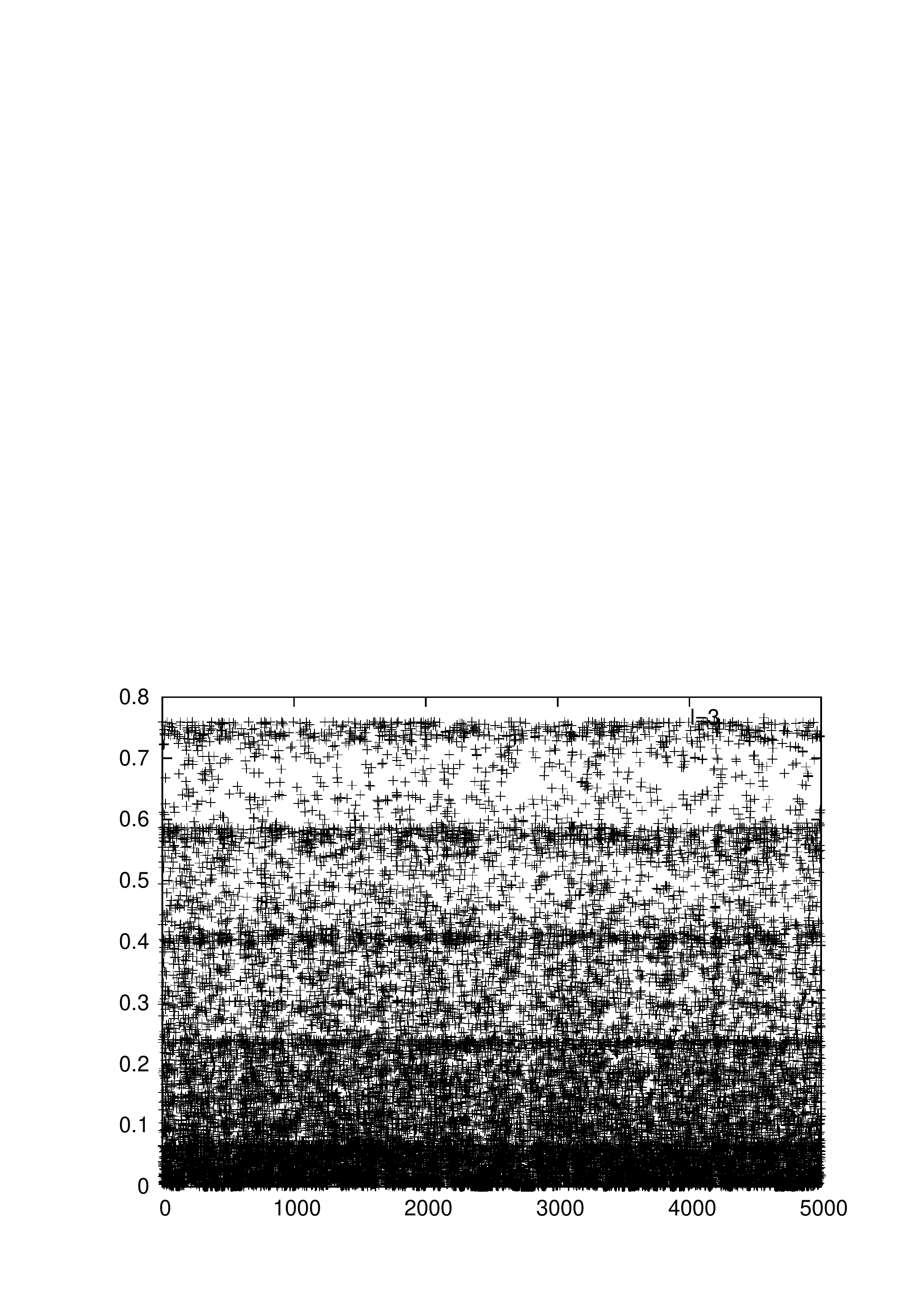}
\caption{Five runs for the perturbed logistic equation, $x_0=0.5$,
$\sigma=0.865$, $r=2$ and  \eqref{def:dud}: (left)  $l=1$ with fast
convergence to the zero equilibrium, (middle) $l=2$ where slower convergence is observed, and (right) $l=3$,
there is no convergence.
}  
\label{figure4}
\end{figure}
\ee

\end{example}

Stabilization of a positive equilibrium $K>0$ corresponds to the equation
\[
x_{n+1}=x_nf(x_n)+\sigma[x_nf(x_n)-K]\xi_{n+1} .
\]
\begin{example}
\label{ex:forlena5}
\be
\item [(a)] Consider   \eqref{def:xicontk} with $s=0$, i.e. the continuous uniform distribution on $[-1, 1]$.
\be
\item [(i)] For the Ricker model we can stabilize $K=1$ for $\sigma=1$ and $r \in (2,2.3068)$. 
Fig.~\ref{figure6} illustrates that stabilization for these and some  higher $r$ can be observed.

\begin{figure}[ht]
\centering
\vspace{-25mm}
\includegraphics[height=.3\textheight]{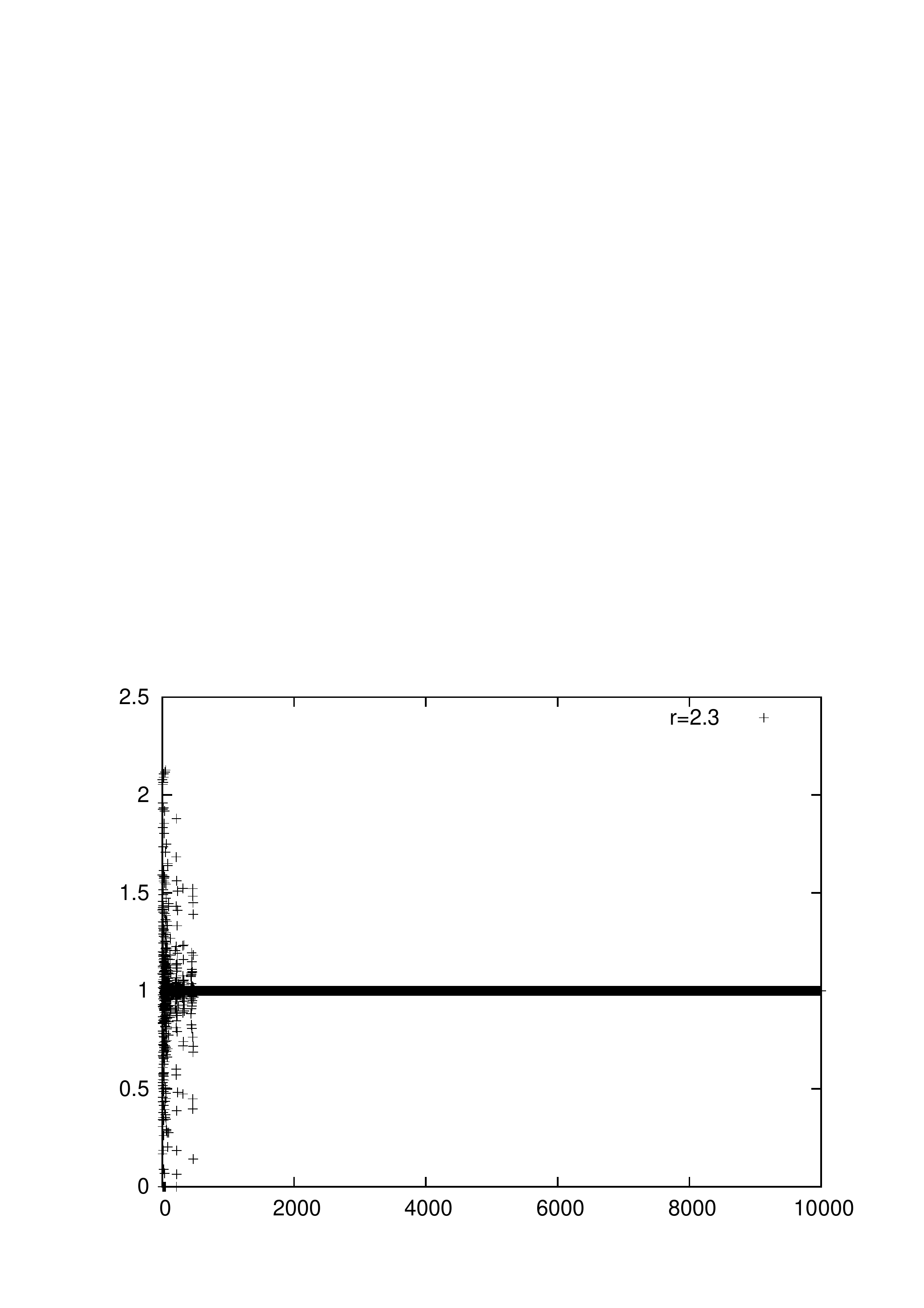}
\includegraphics[height=.3\textheight]{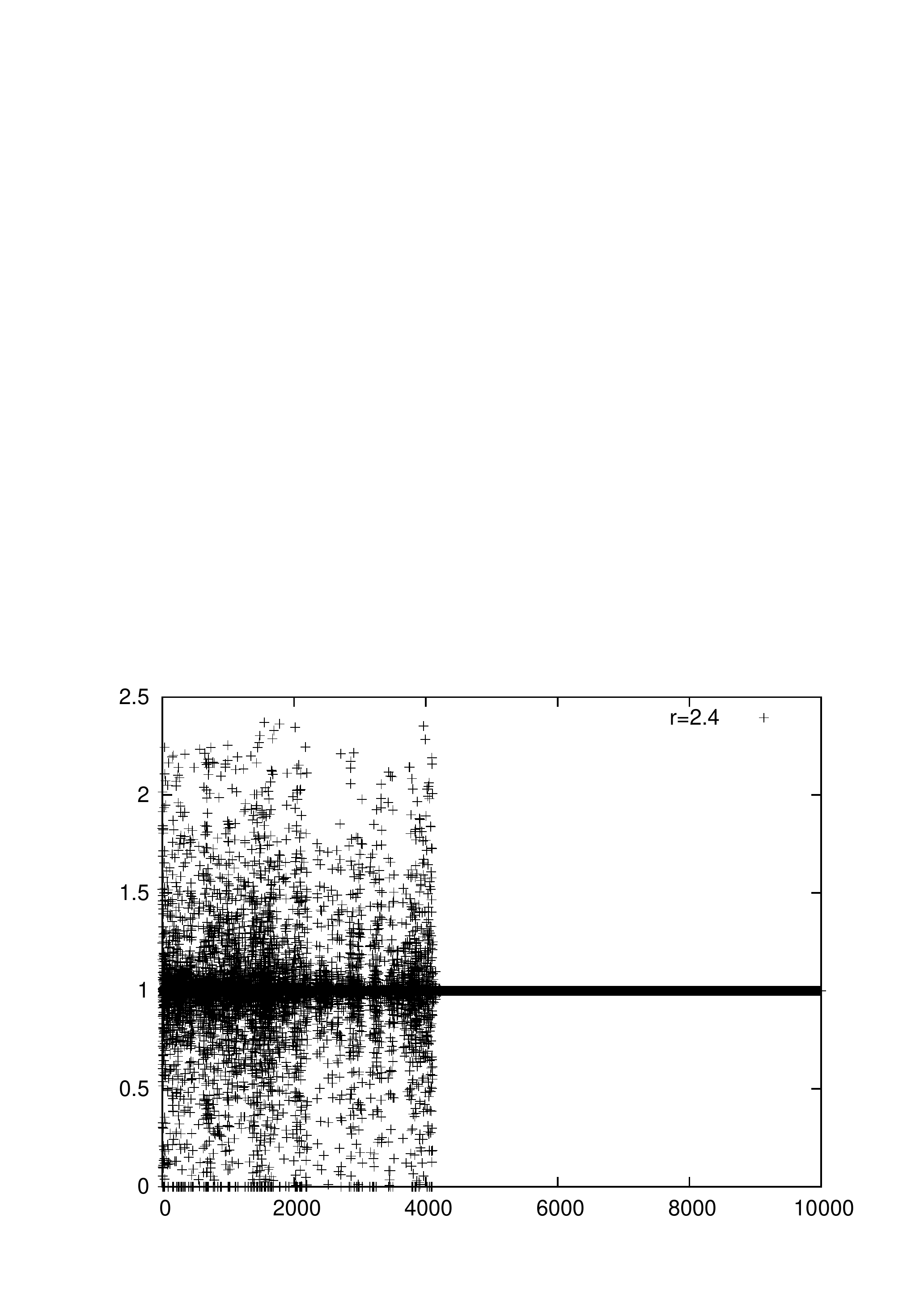}
\includegraphics[height=.3\textheight]{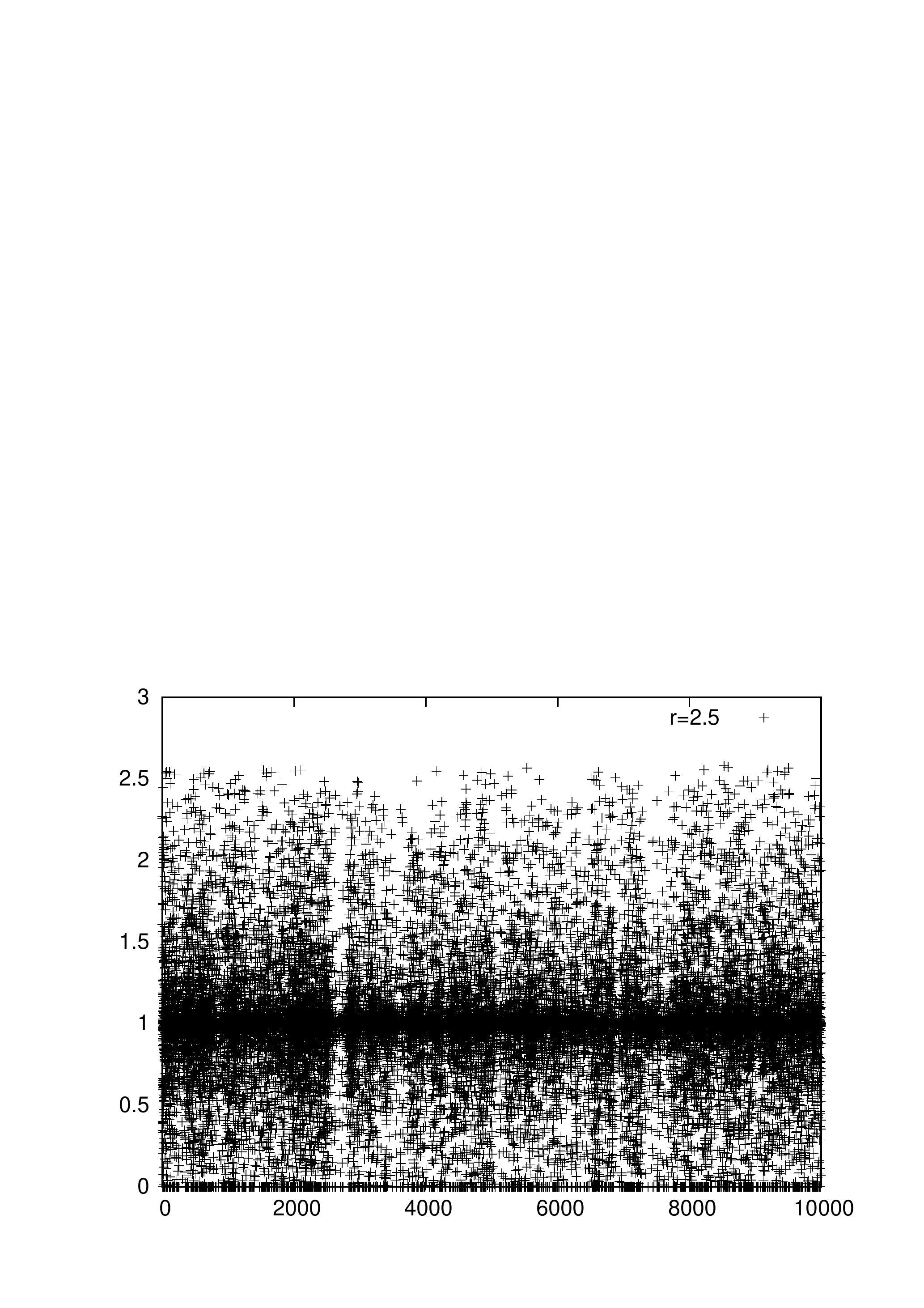}
\caption{Five runs for the perturbed Ricker equation, $x_0=0.5$, $\sigma=1$, \eqref{def:xicontk} with $s=0$ and (left)
$r=2.3$ with fast
convergence to the equilibrium $K=1$, (middle) $r=2.4$ where slower convergence is observed, and (right) $r=2.5$,
there is no convergence.
}  
\label{figure6}
\end{figure}

\item [(ii)] For the logistic map we consider stabilization of  $K=1-\frac 1r$ for $r\in (3, 3.35)$ and $\sigma=1$.
Fig.~\ref{figure7} shows that $r \approx 3.3484$ is an approximate stabilization bound.

\begin{figure}[ht]
\centering
\vspace{-25mm}
\includegraphics[height=.3\textheight]{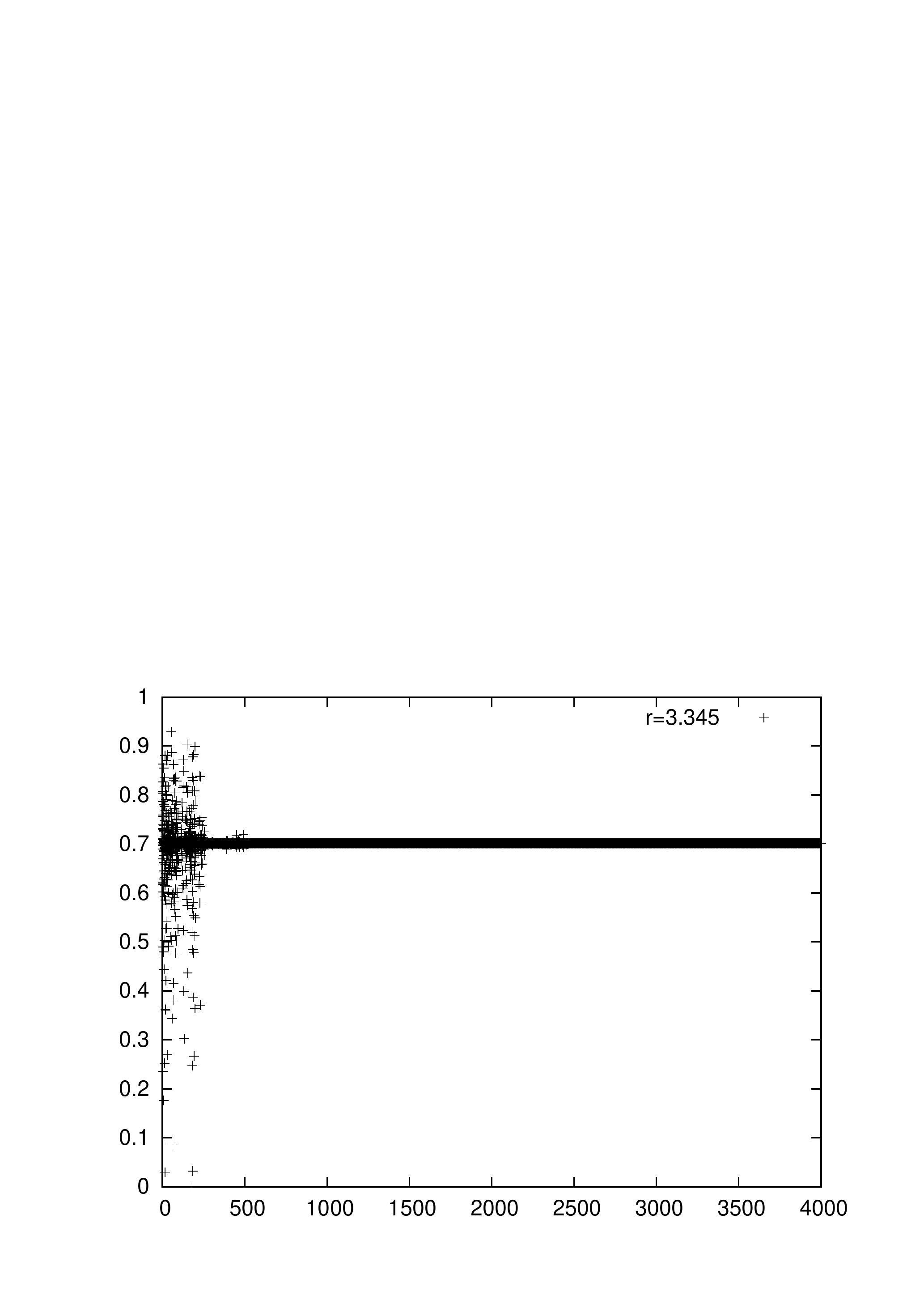}
\includegraphics[height=.3\textheight]{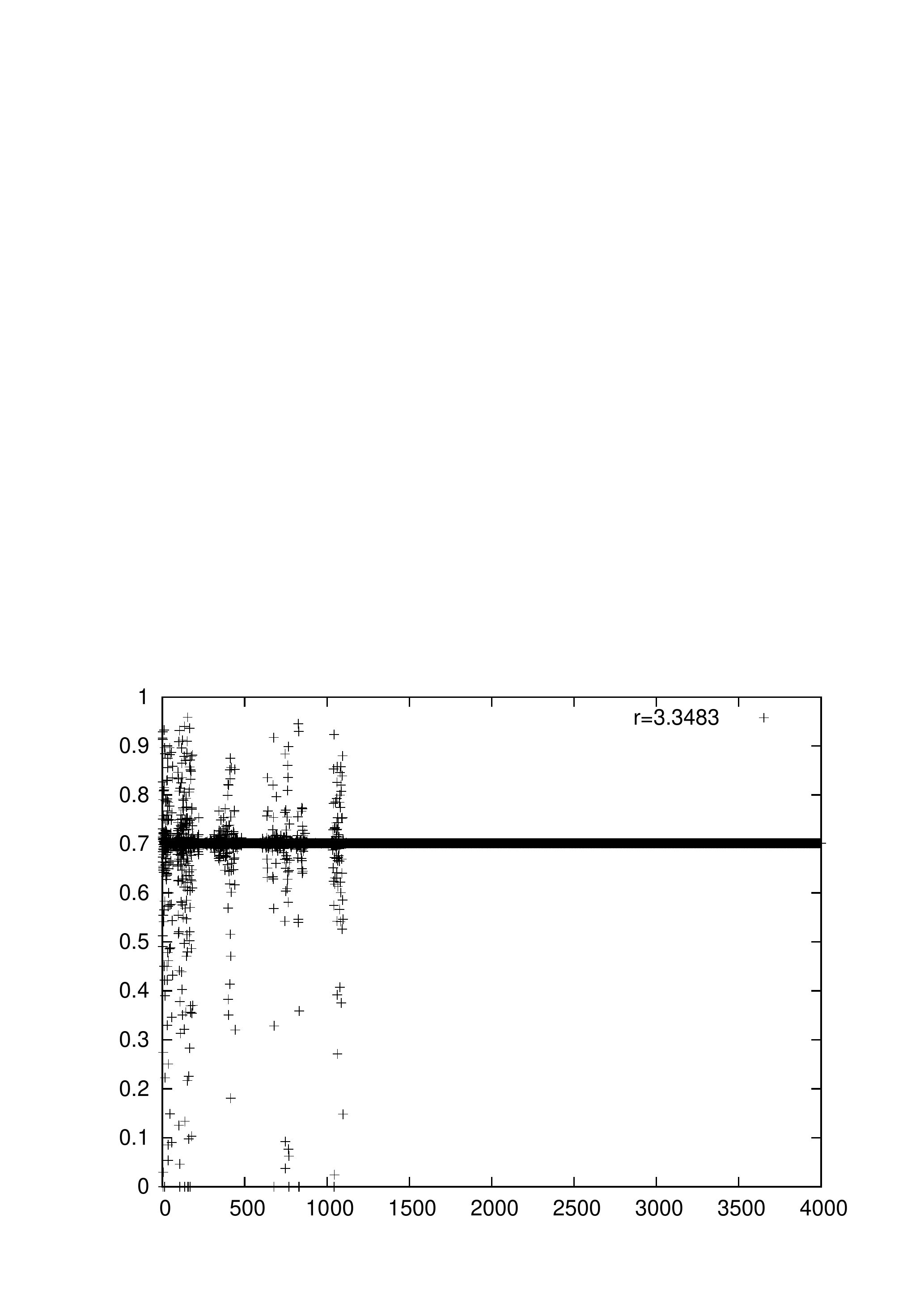}
\includegraphics[height=.3\textheight]{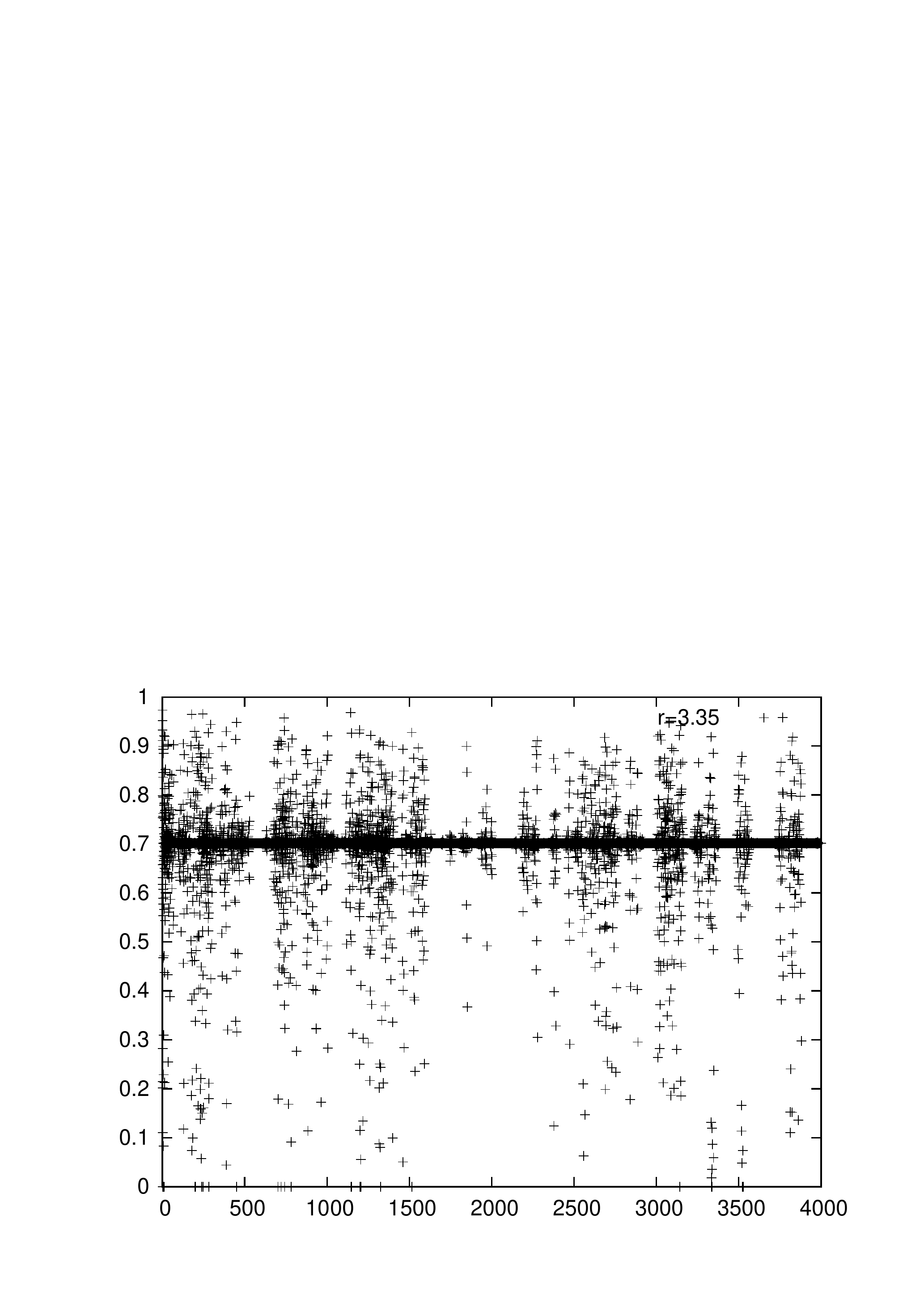}
\caption{Five runs for the perturbed logistic equation, $x_0=0.5$, $\sigma=1$, \eqref{def:xicontk} with $s=0$, and 
(left)
$r=3.345$ with fast
convergence to the equilibrium $K=1-\frac{1}{r}$, (middle) $r=3.3483$ where slower convergence is observed, and (right) $r=3.35$, 
there is no convergence.
}  
\label{figure7}
\end{figure}
\ee
\item [(b)]  Consider \eqref{def:dud}. 

\be
\item [(i)] The logistic map for $r=4$ is chaotic. We stabilize the positive equilibrium using a noise with 
$\sigma=1.05$ and various $l$, see Fig.~\ref{figure9}.

\begin{figure}[ht]
\centering
\vspace{-25mm}
\includegraphics[height=.3\textheight]{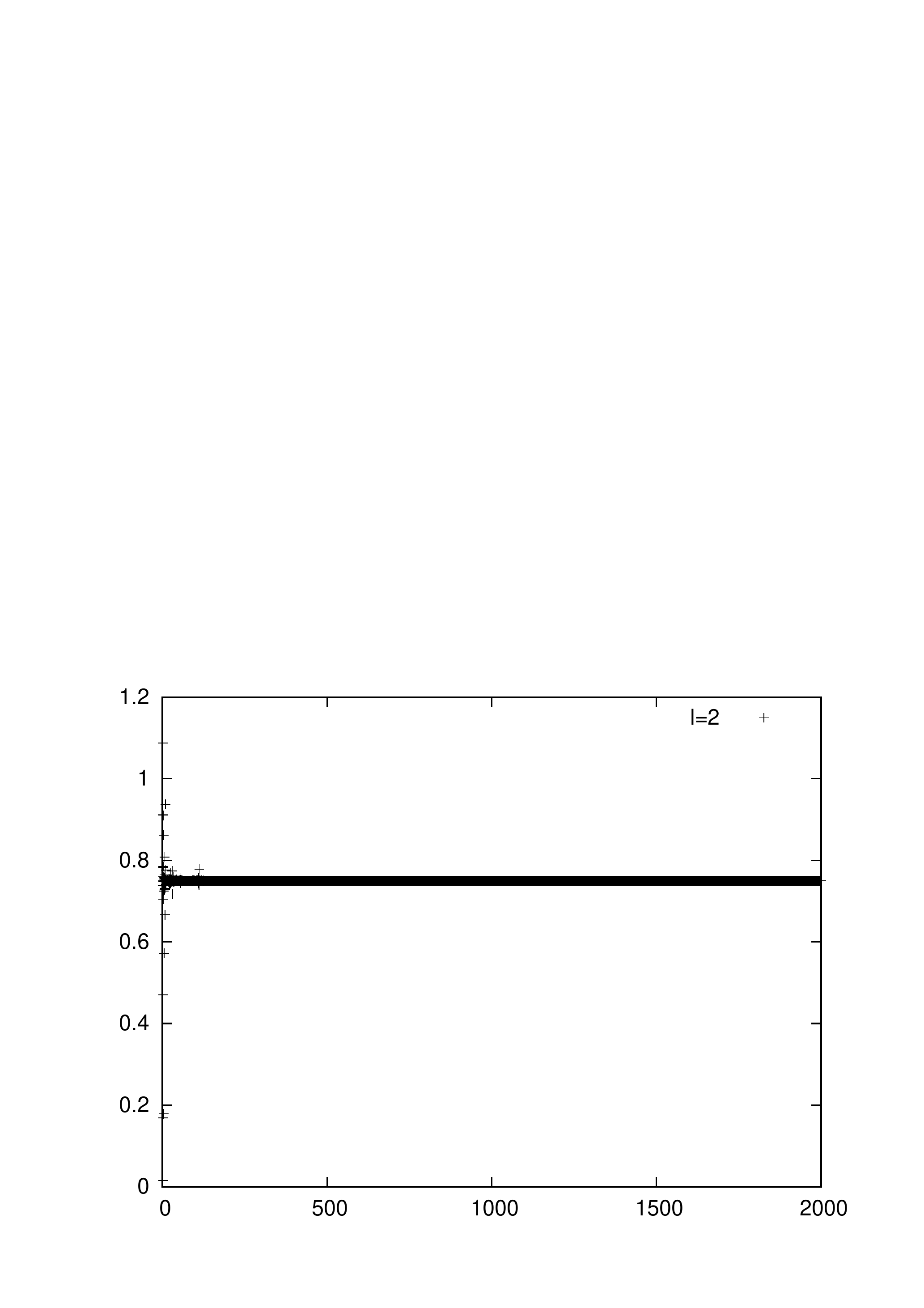}
\includegraphics[height=.3\textheight]{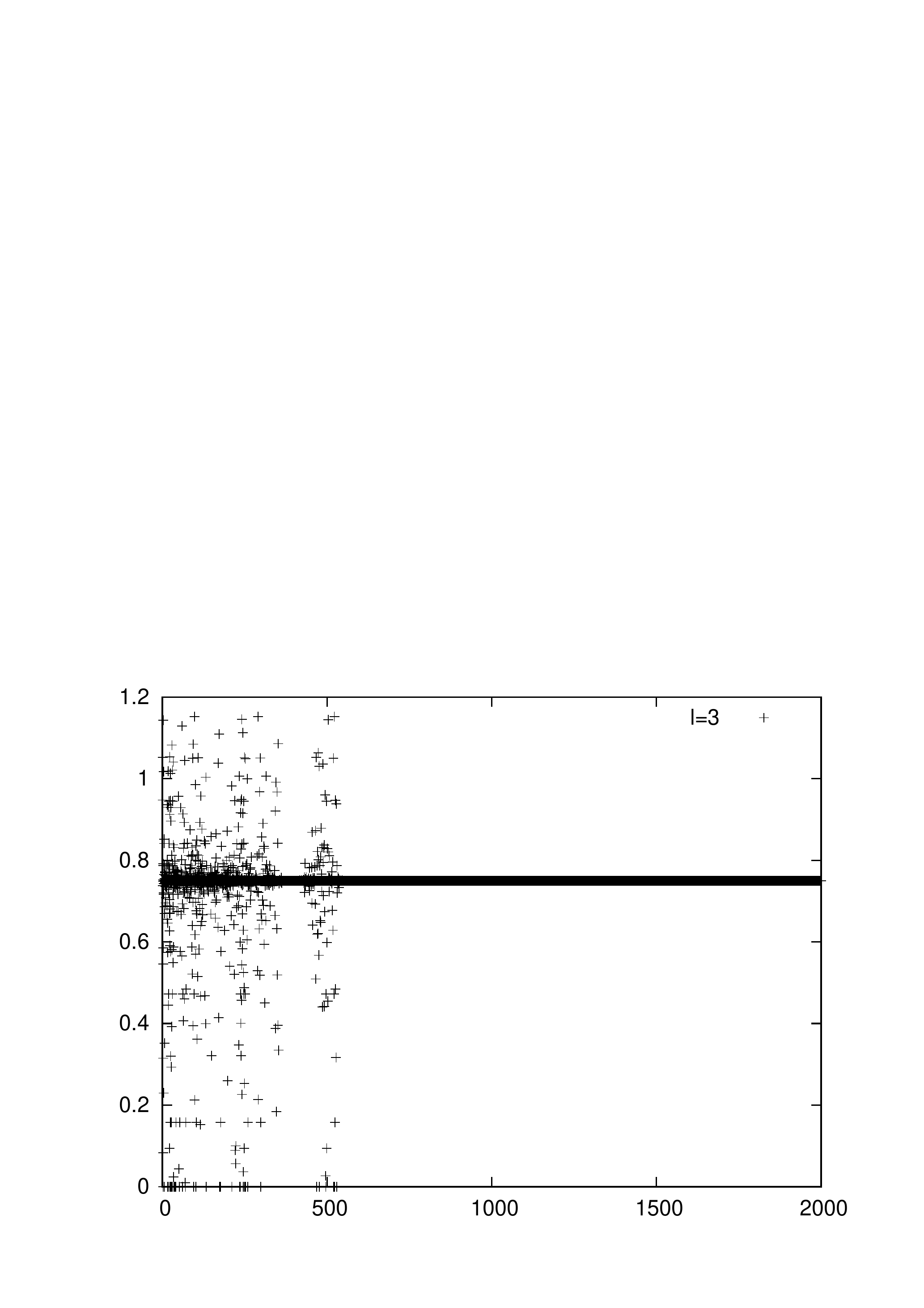}
\includegraphics[height=.3\textheight]{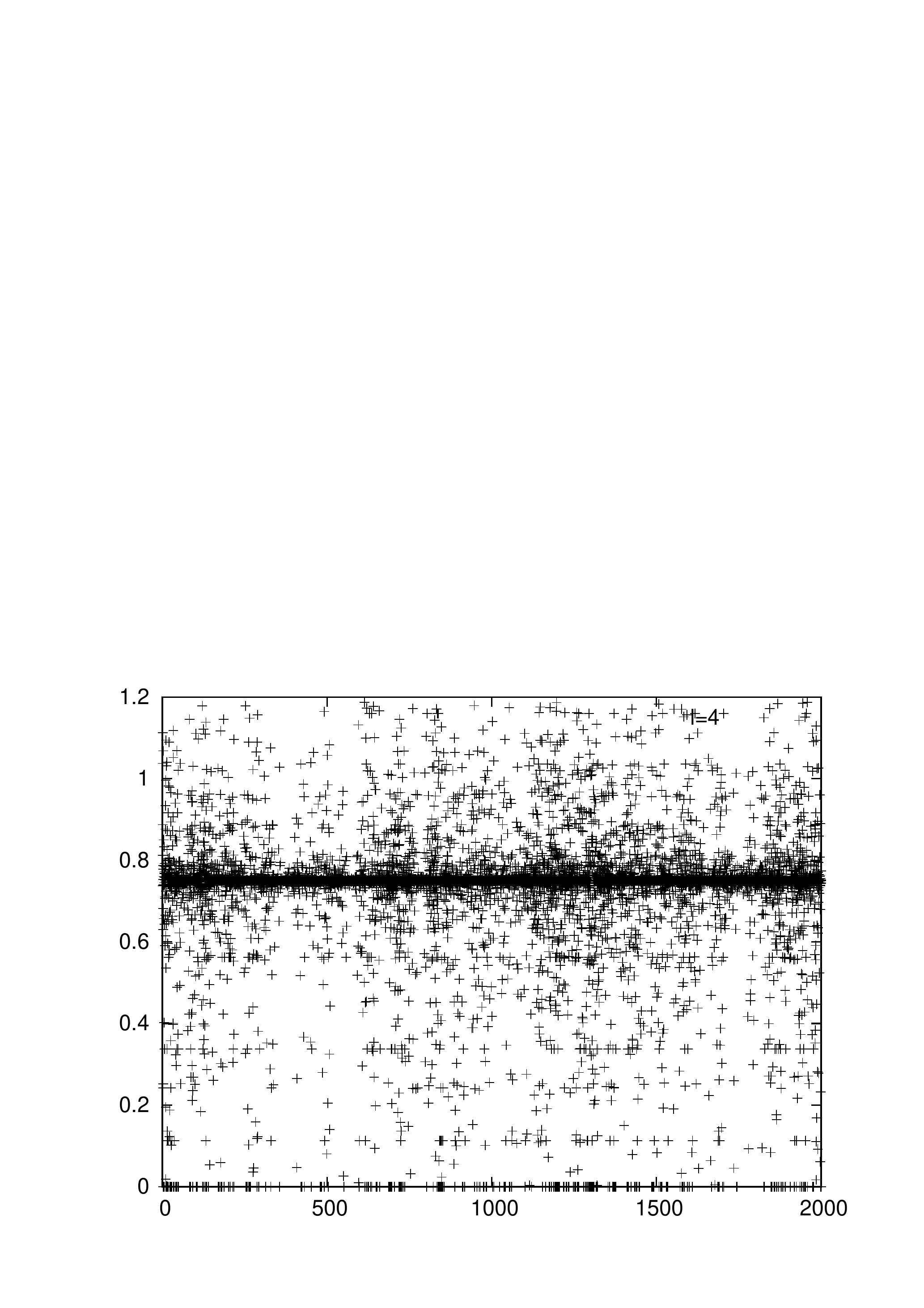}
\caption{Five runs for the perturbed logistic equation, $x_0=0.5$,
$\sigma=1.05$, $r=4$ and  \eqref{def:dud}: (left) $l=2$ with fast
convergence to the positive equilibrium, (middle) $l=3$ where slower convergence is observed, and (right) $l=4$,
there is no convergence.
}  
\label{figure9}
\end{figure}

\item[(ii)]  Function   $F(x)=\frac {3x}{2+(x-3)^2}$ has   two positive fixed points, $K_1=2$ and $K_2=4$.  
Also, $F'(4)= \frac{3(11-x^2)}{(2+(x-3)^2)^2}\biggr|_{x=4}=-\frac 53<-1$, so $K_2$ is an unstable equilibrium. 
To stabilize $K_2=4$  we consider the modified Beverton-Holt equation
\[
 x_{n+1}=\frac {3x_n}{2+(x_n-3)^2}+\left[\frac {3x_n}{2+(x_n-3)^2}-4\right]\xi_{n+1},\quad x_0\in \mathbb R,
\]
with $\xi$ defined by \eqref{def:dud}.  Stabilization  depending on $l$  is illustrated in Fig.~\ref{figure10}.

\begin{figure}[ht]
\centering
\vspace{-25mm}
\includegraphics[height=.3\textheight]{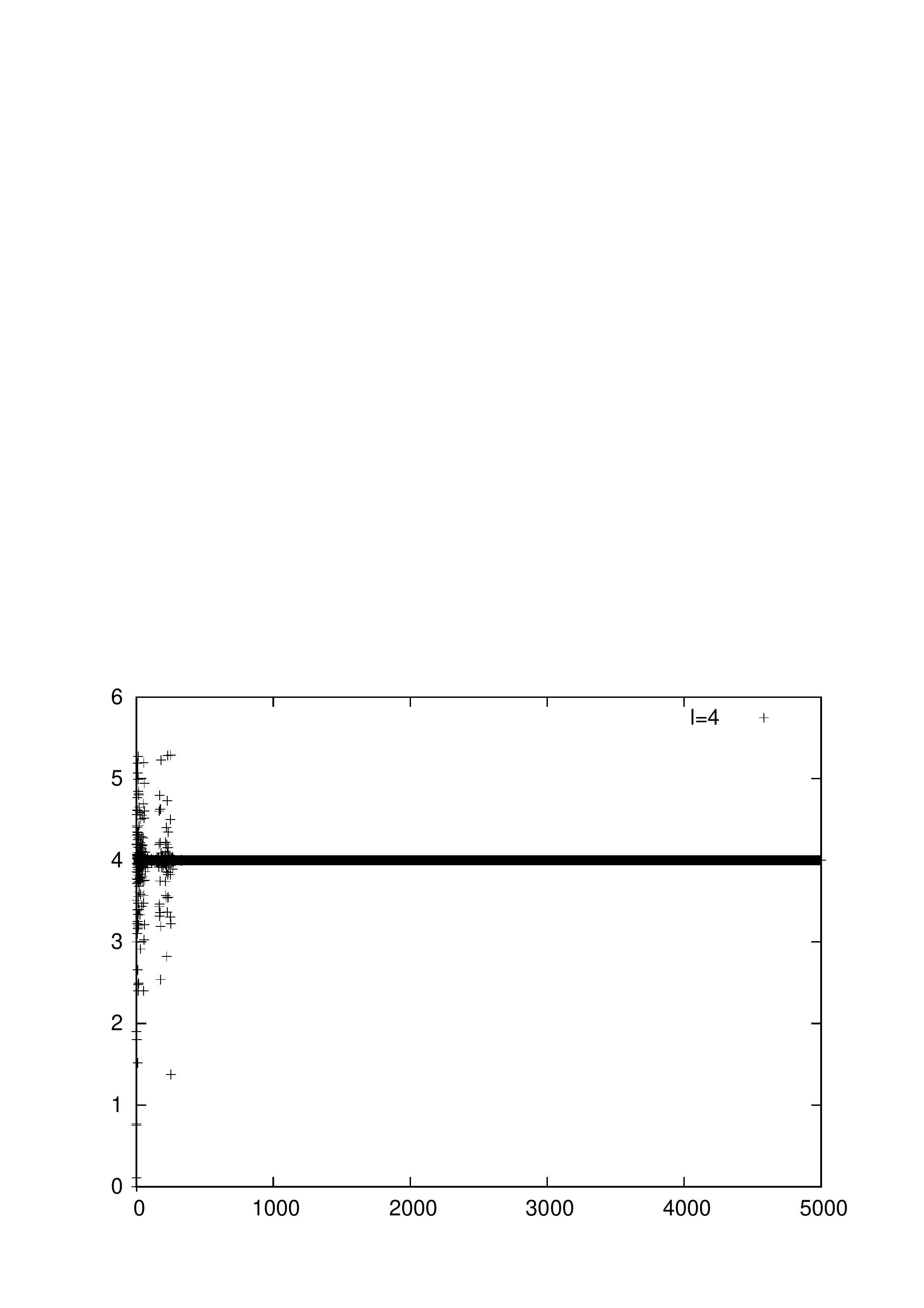}
\includegraphics[height=.3\textheight]{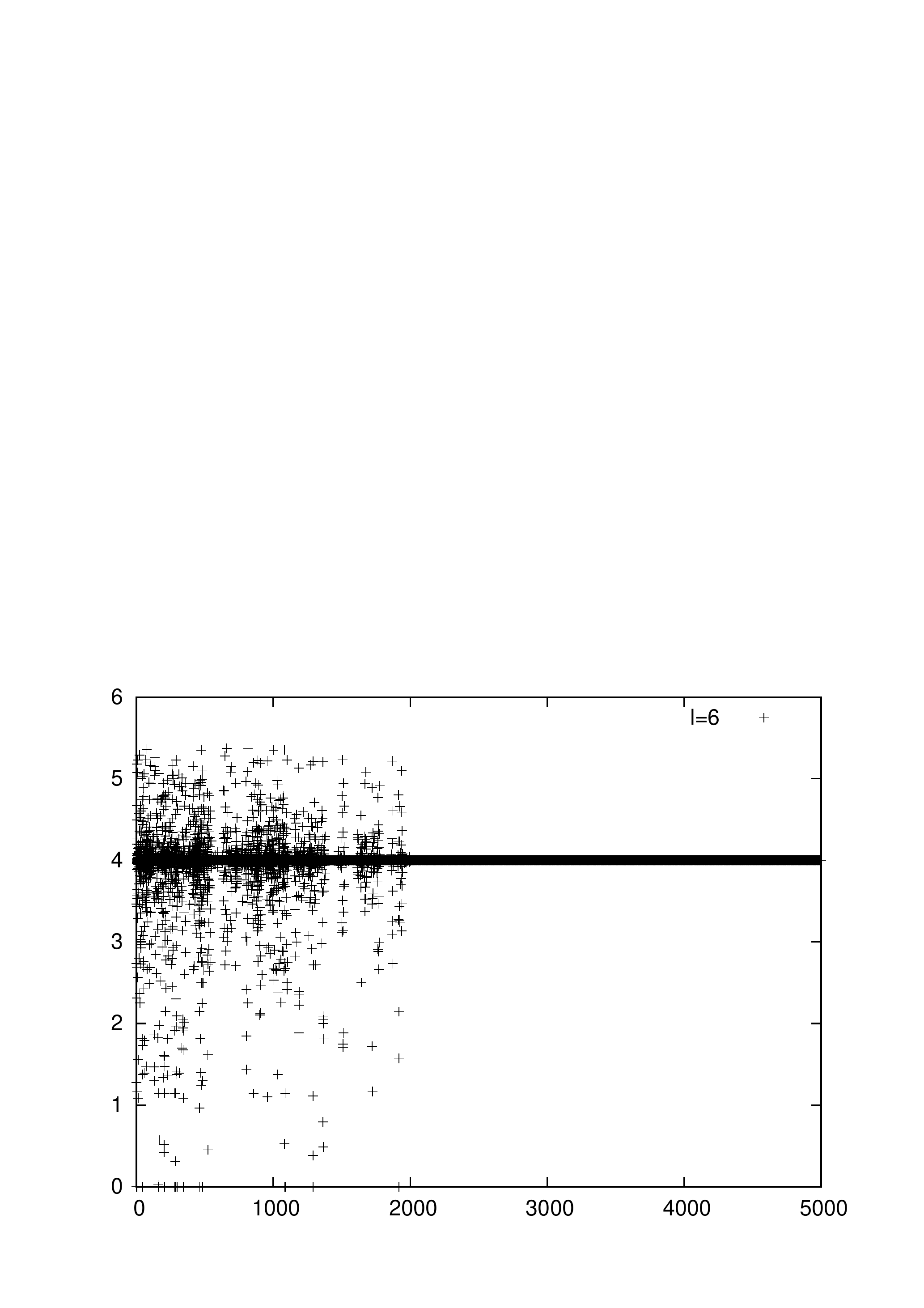}
\includegraphics[height=.3\textheight]{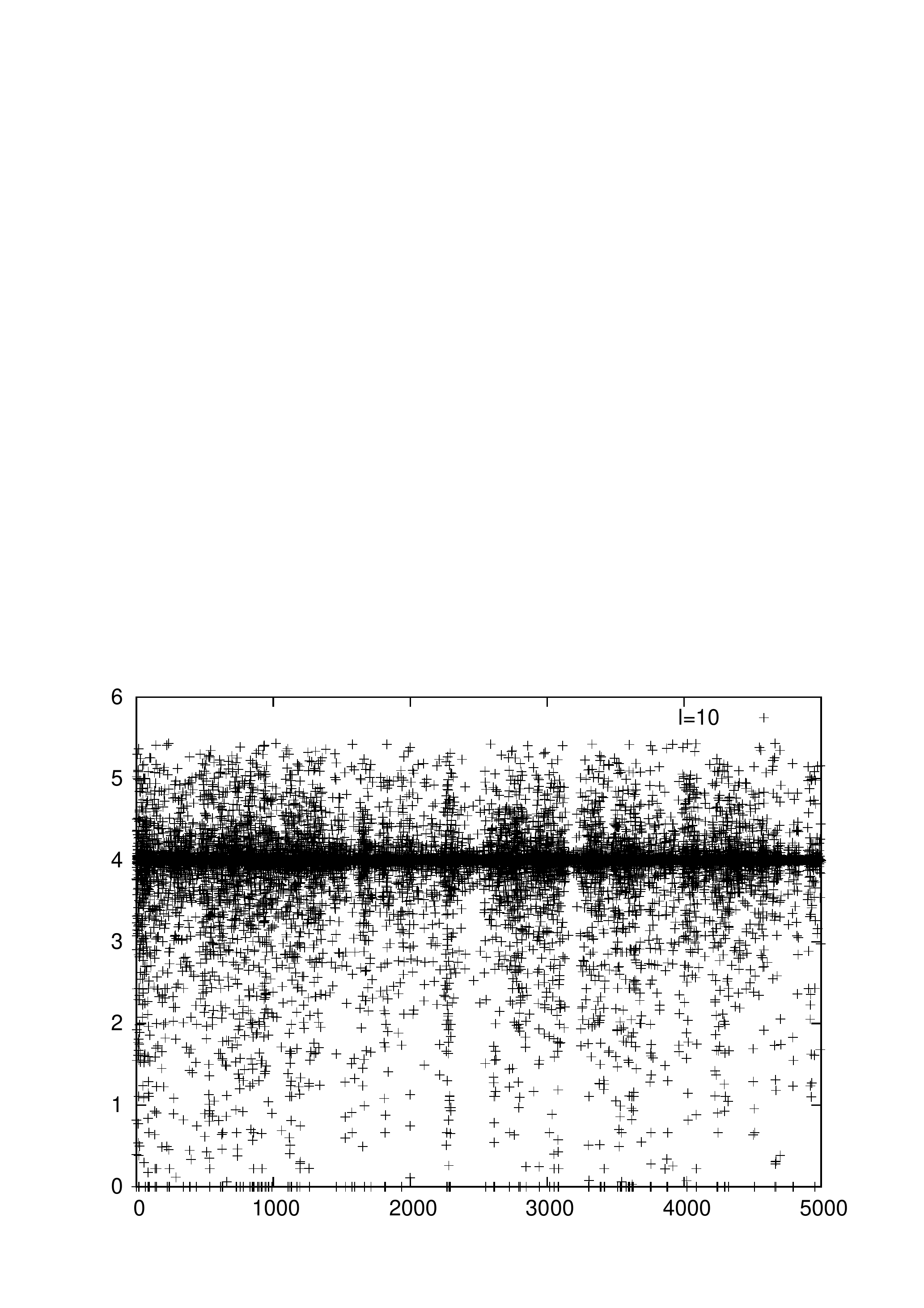}
\caption{Five runs for the perturbed modified Beverton-Holt equation, 
$x_0=0.5$, $\sigma=1.05$, and \eqref{def:dud}: (left) $l=4$ with fast
convergence to the positive equilibrium, (middle) $l=6$ where slower convergence is observed, and (right) $l=10$,
there is no convergence.
}  
\label{figure10}
\end{figure}

\ee
\ee
\end{example}

Finally, for stabilization we consider  the noise of type \eqref{def:contdud}.
\begin{example}
We consider stabilization \eqref{eq:stabK} of the positive equilibrium $K=1$ for the Ricker model with $r=2.1$, 
$\sigma=0.7$, noise \eqref{def:contdud} with $\delta=0.02$ and $l=2,6$, see Fig.~\ref{figure_add1}.

\begin{figure}[ht]
\centering
\vspace{-25mm}
\includegraphics[height=.3\textheight]{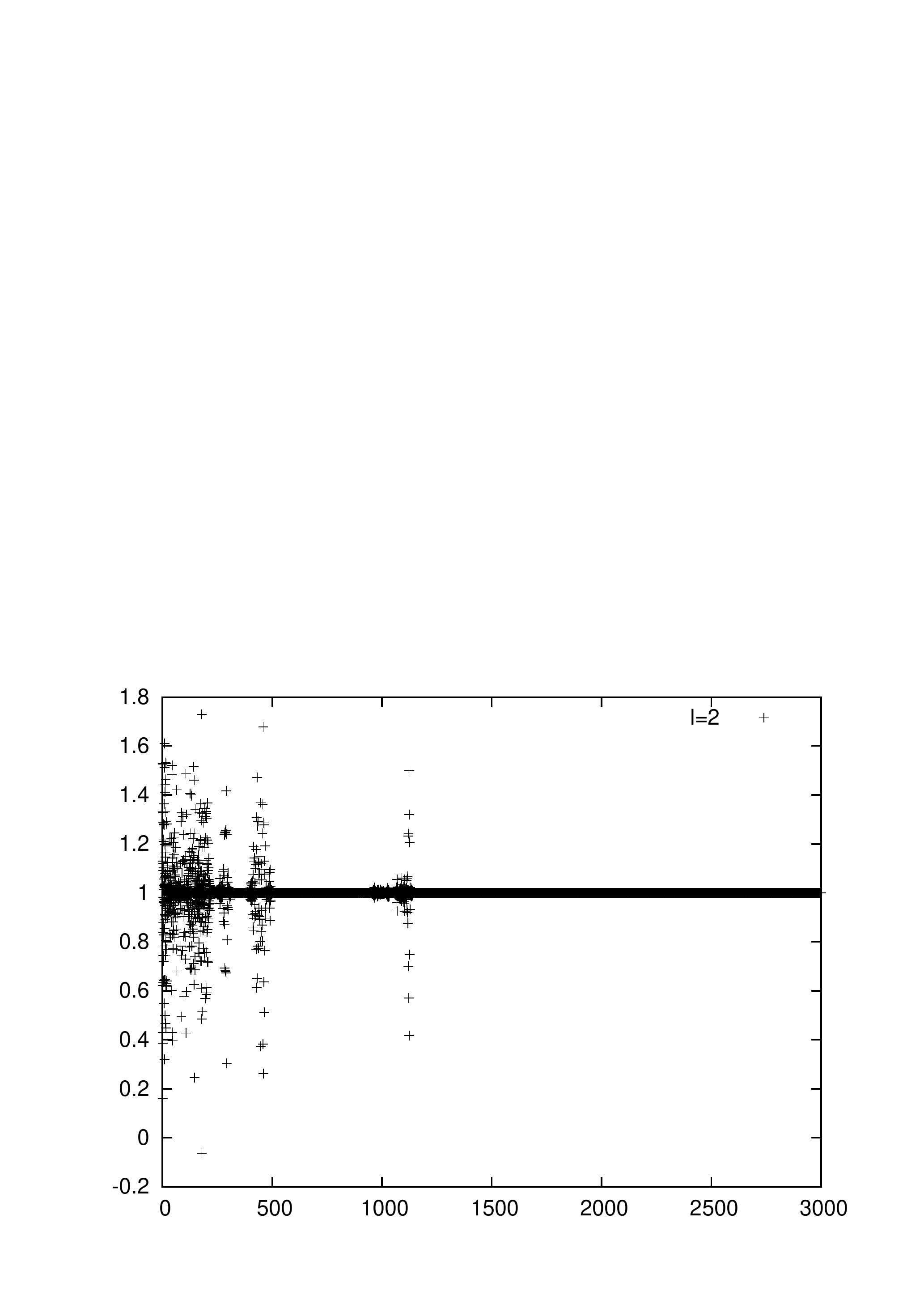}
\includegraphics[height=.3\textheight]{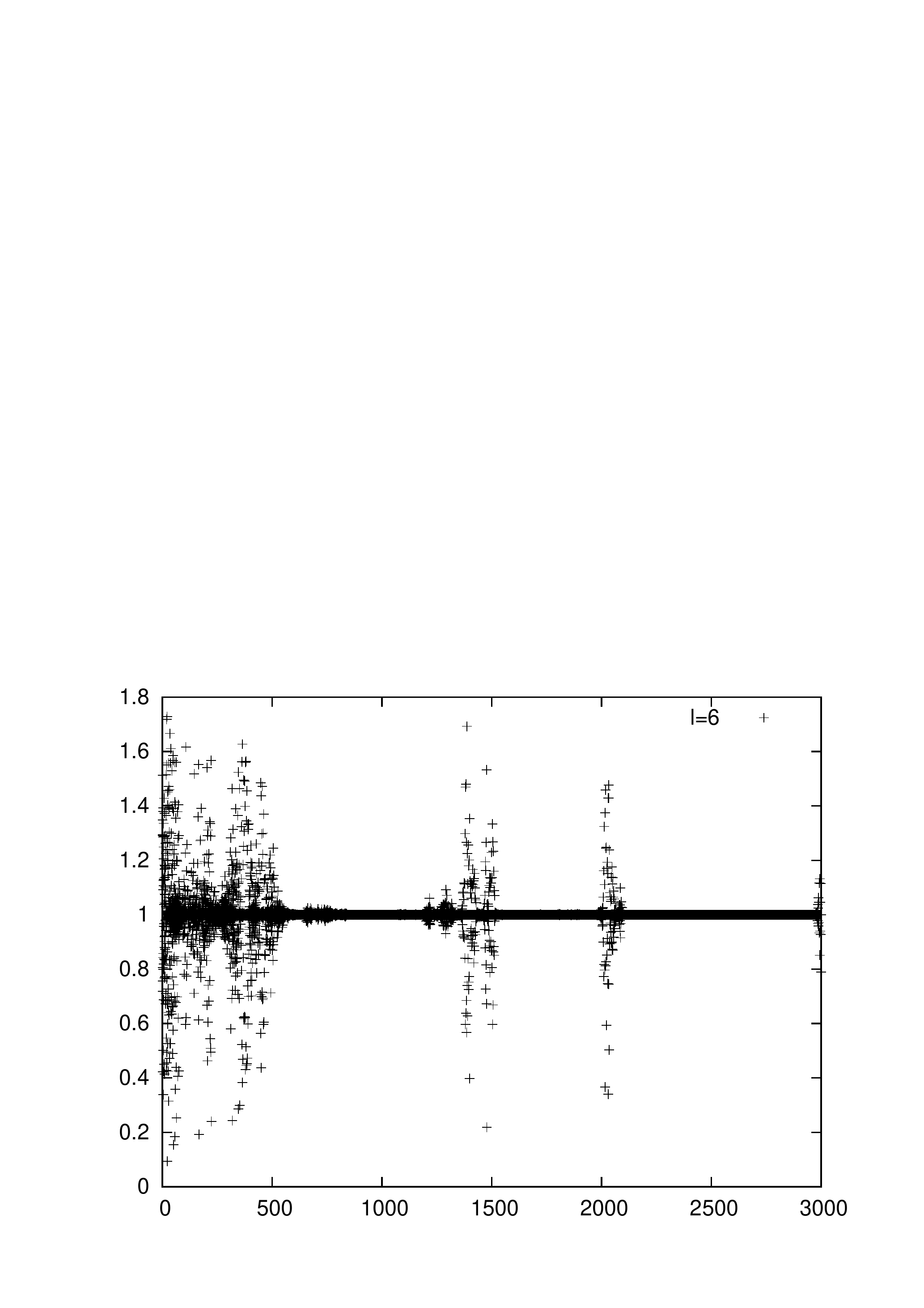}
\includegraphics[height=.3\textheight]{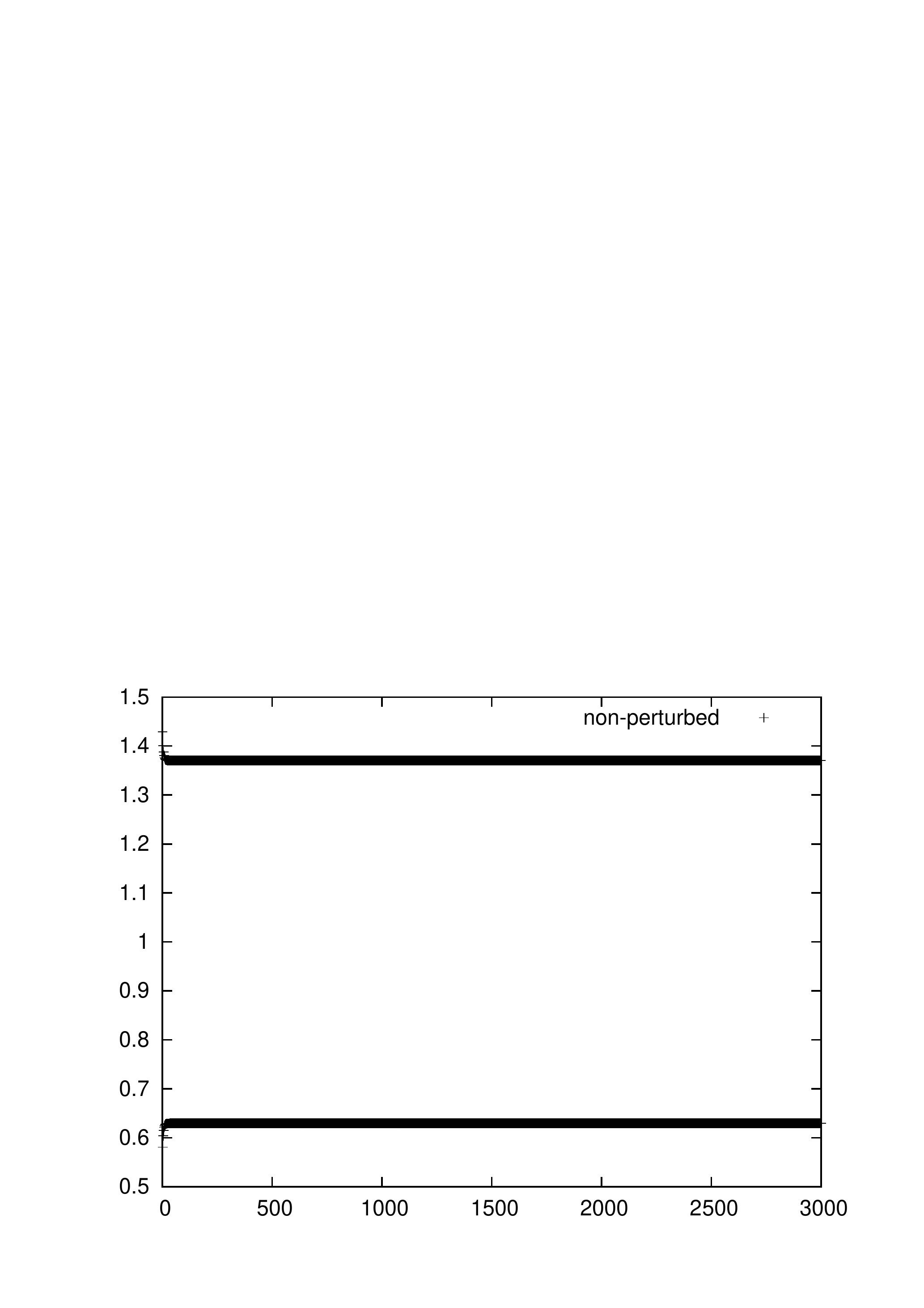}
\caption{The noise (\protect\ref{def:contdud}) with $\delta=0.02$ and 
five runs for the perturbed Ricker equation with $r=2.1$, $\sigma=0.7$, $x_0=0.5$ and
(left) $l=2$, (middle) $l=6$ and (right) for $\sigma=0$.  
}
\label{figure_add1}
\end{figure}
\end{example}

Next, let us proceed to destabilization. We start with destabilization of the zero equilibrium, which  corresponds to the equation
$\displaystyle
x_{n+1}=x_nf(x_n)+x_n\sigma_b(x_n) \xi_{n+1},
$
$\displaystyle \sigma_b(x) = \left\{ \begin{array}{ll} \sigma(x), & x \leq b, \\ 0, & x>b, \end{array} \right.$
where $\sigma(x)$ is chosen either by \eqref{ineq:sdud} or \eqref{cond:fe}, depending on the distribution of  $\xi$.
\begin{example}
\label{ex:destab0}
Consider $F$ as in  Example~\ref{ex:forlena5} (b)(ii). Since $F'(0)=\frac 3{11}<1$, zero is a stable equilibrium. 
We have 
$
F_{max}\approx 4.73736, \quad F(F_{max})=F(4.73736)\approx 2.8320\in (2, F_m).
$
We put
$
b=2, \quad \mbox{then}\quad  F:[b, F_m]\to [b, F_m],
$
so $[b, F_m]=[2, 4.73736] $ could be a trap if $\displaystyle x_{n+1}=x_n\left[f(x_n)+\sigma (x_n)\xi_{n+1} \right]$, with $x_n\in [0,b]$, will not jump over $F_m$. 
Below $\xi$ is defined by two types of distributions: \eqref{def:xicontk}  and \eqref{def:dud}. 
In each case, the theoretical results from Section \ref{sec:destab} are more restrictive than suggested by simulations.

\be

\item [(i)]  Fig.~\ref{figure29} illustrates the influence of $\sigma$ on destabilization for distribution \eqref{def:xicontk} in case $s=1$: still stability for $\sigma=1.2$,
partial destabilization for $\sigma=1.4$ and destabilization for $\sigma=1.6$.  
Fig.~\ref{figure30} considers the case $s=4$ with stability for  $\sigma=1.1$, partial destabilization
for $\sigma=1.15$ and destabilization for $\sigma=1.2$; here $s=4$, we observe that higher values of $s$ lead to smaller $\sigma$, sufficient for destabilization.
    
\begin{figure}[ht] 
\centering
\vspace{-25mm}
\includegraphics[height=.3\textheight]{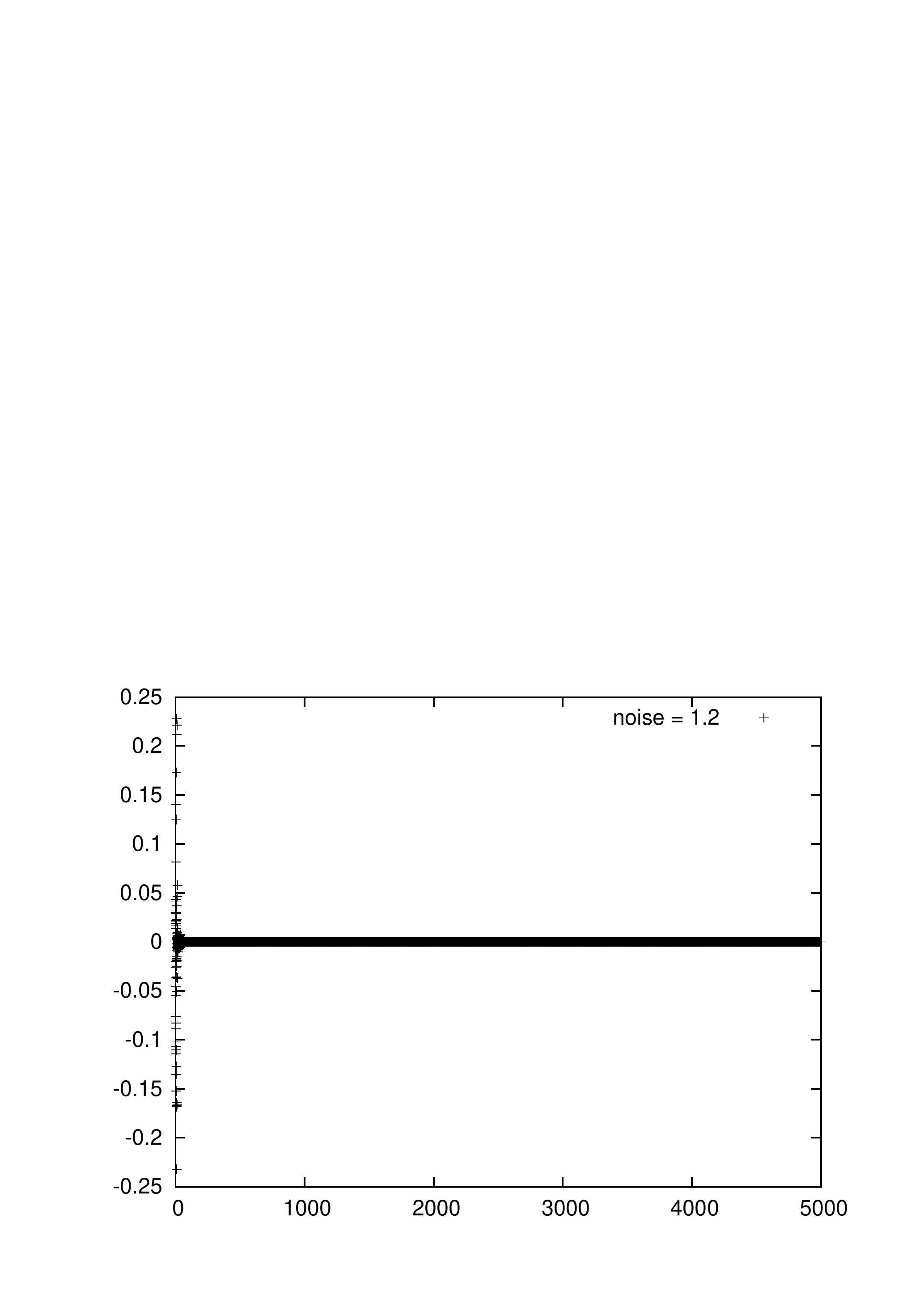}
\includegraphics[height=.3\textheight]{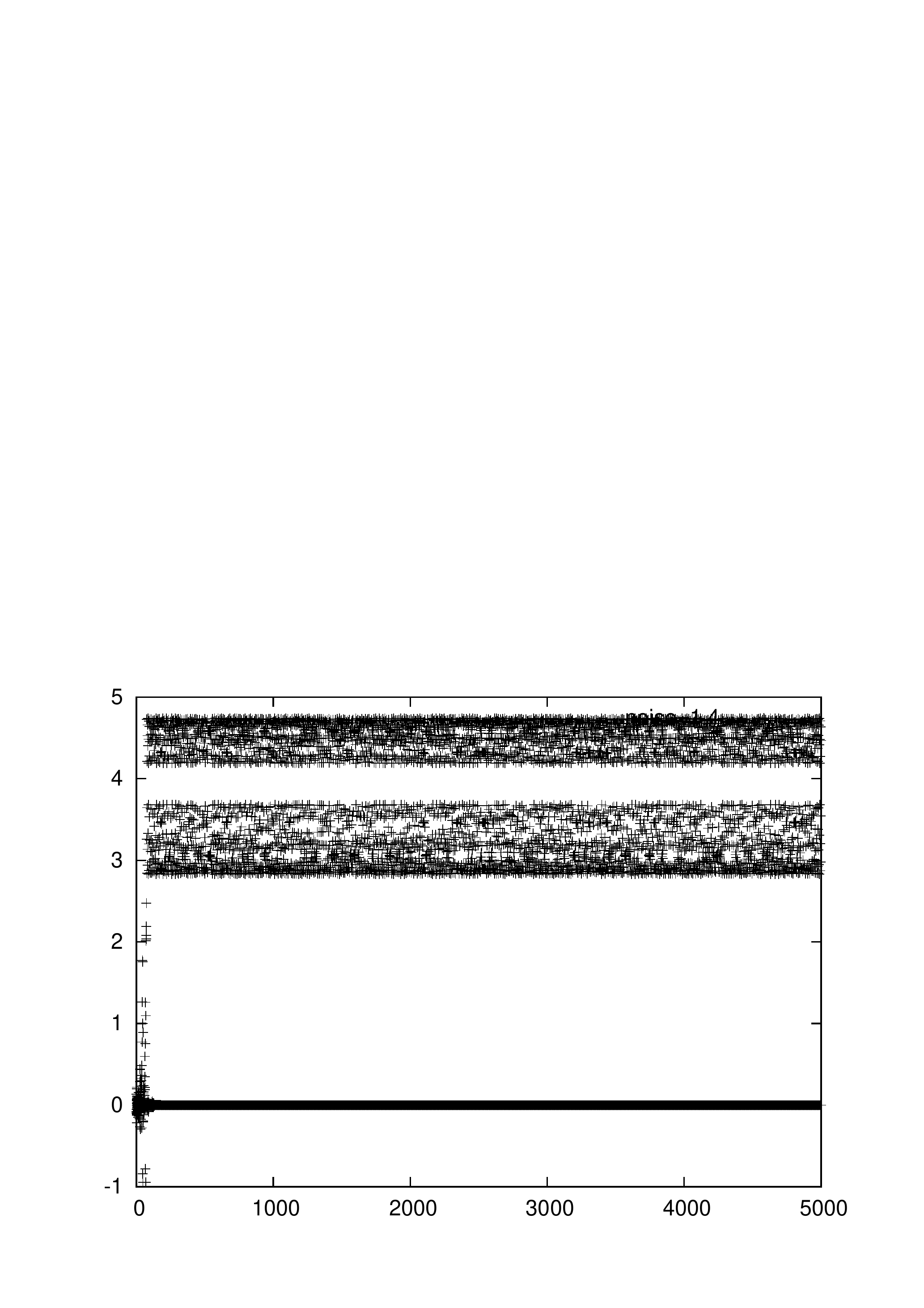}
\includegraphics[height=.3\textheight]{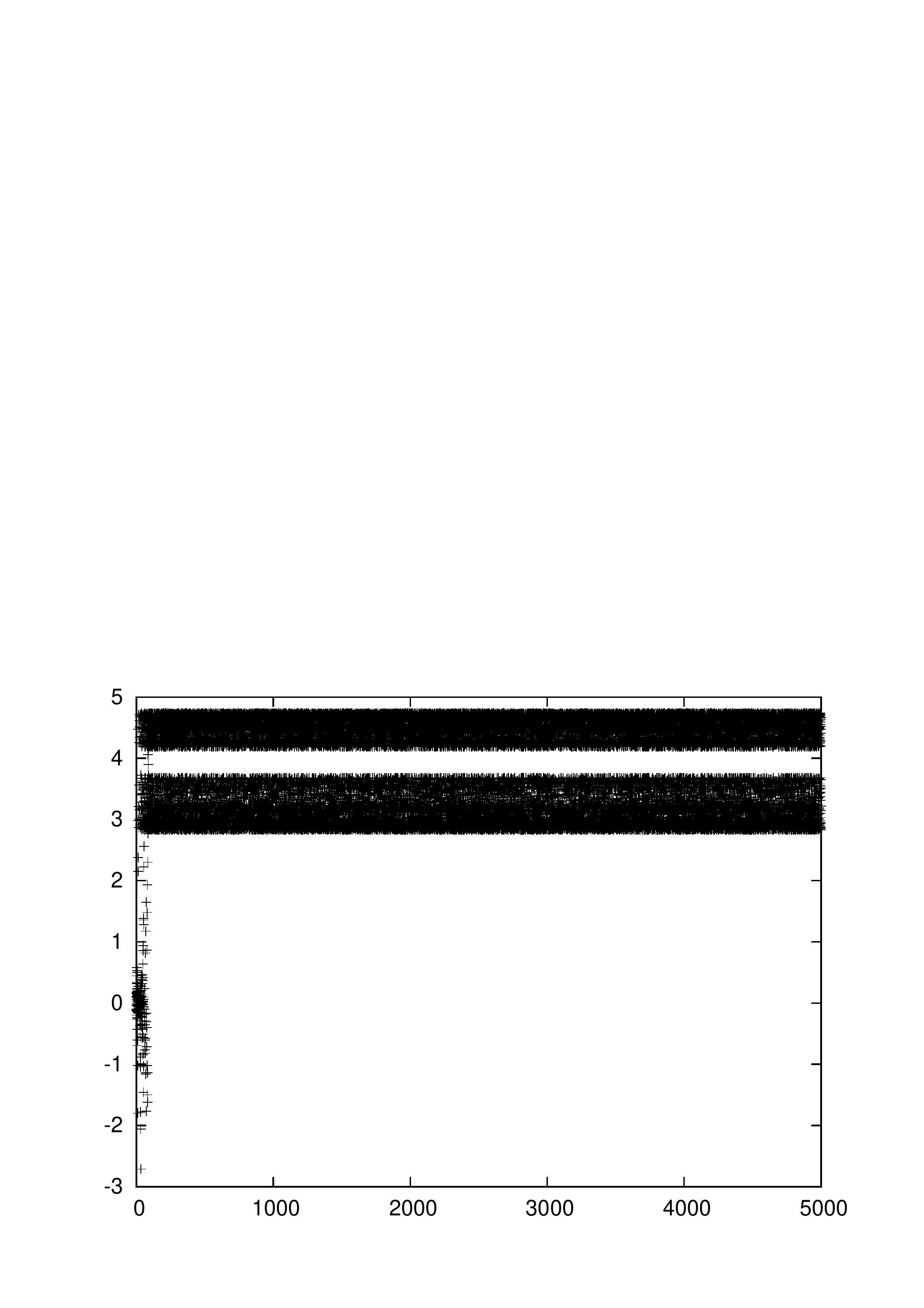}
\caption{Five runs for (\protect{\ref{def:xicontk}})  with $s=1$, $x_0=0.1$ and (from left to right) 
$\sigma=1.2,1.4,1.6$.}
\label{figure29}
\end{figure}

\begin{figure}[ht]  
\centering
\vspace{-25mm}
\includegraphics[height=.3\textheight]{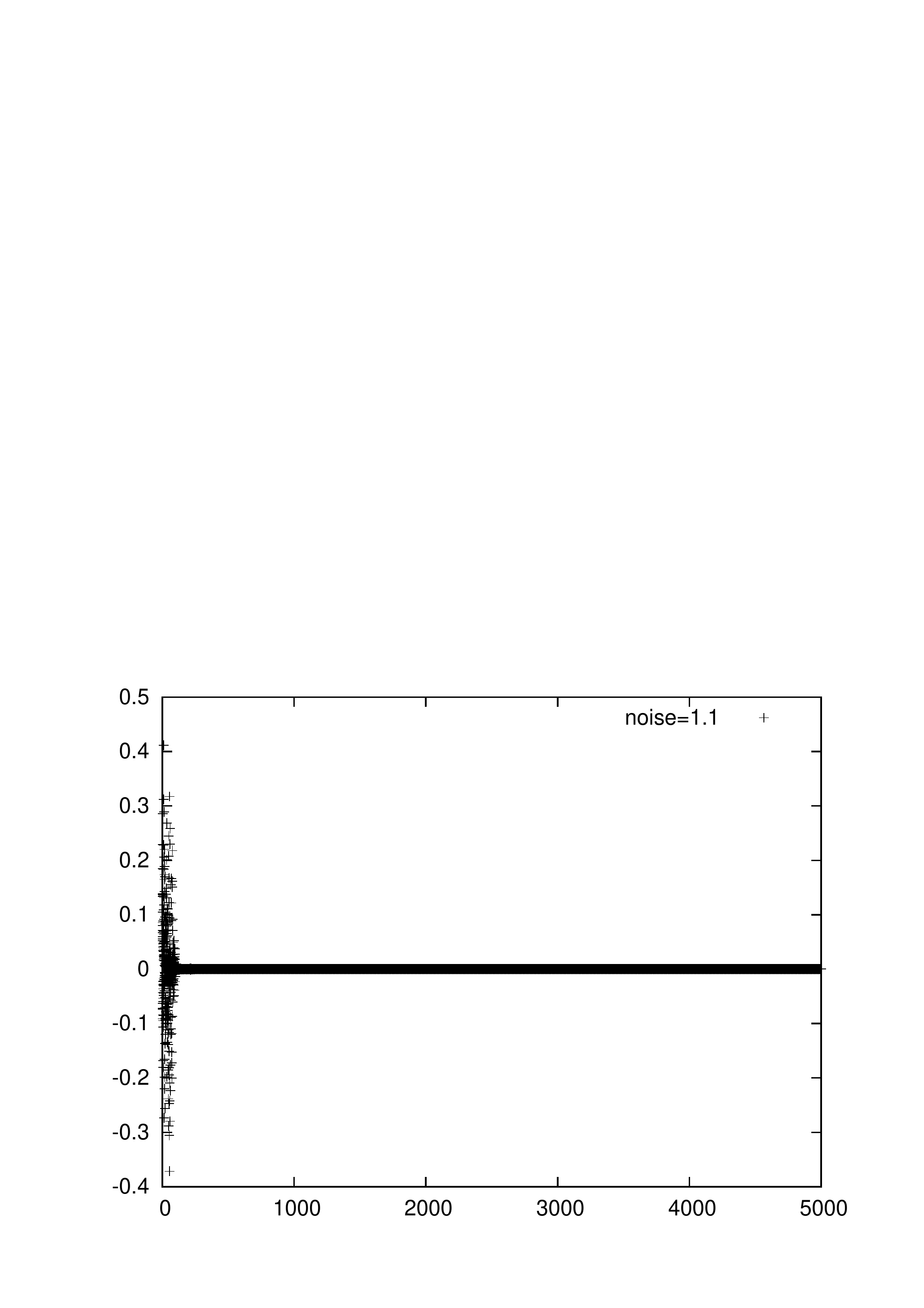}
\includegraphics[height=.3\textheight]{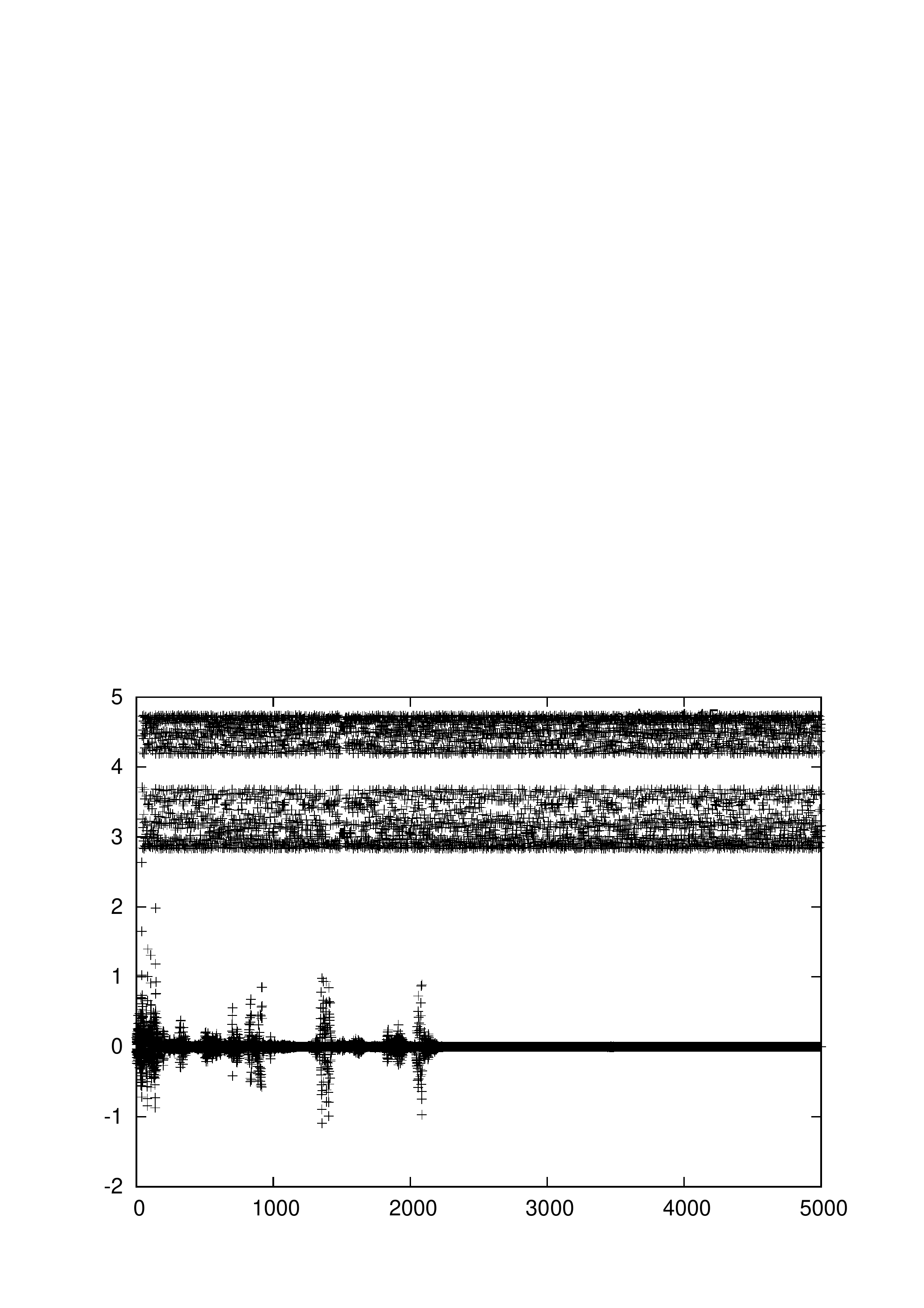}
\includegraphics[height=.3\textheight]{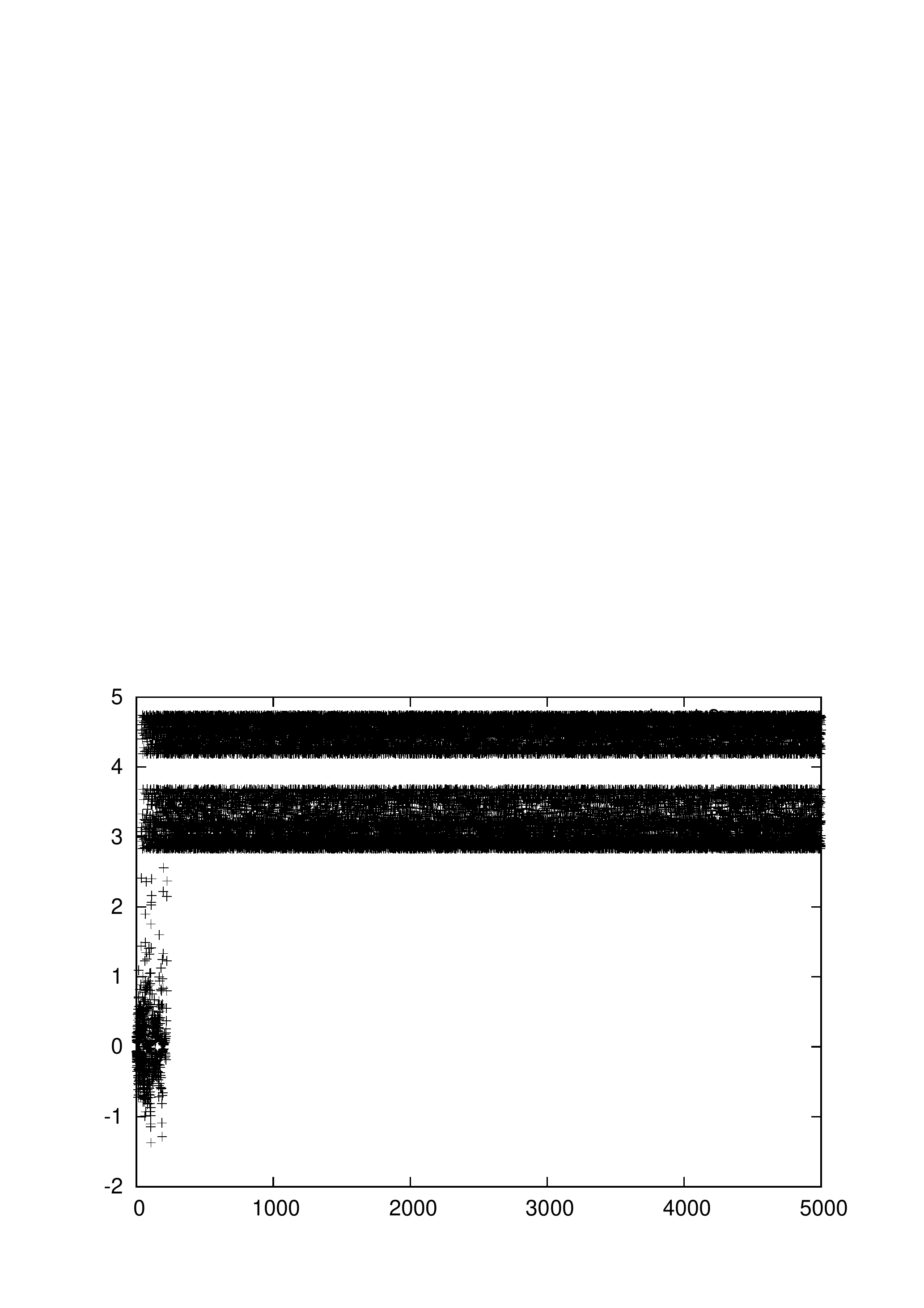}
\caption{Five runs for (\protect{\ref{def:xicontk}}) with $s=4$, $x_0=0.1$ and (from left to right) 
$\sigma=1.1,1.15,1.2$.}
\label{figure30}
\end{figure}

\item [(ii)]    Fig.~\ref{figure20} illustrates destabilization of zero for distribution \eqref{def:dud}, $l=1,3,4$, for an
initial value $x_0=0.3$ in the vicinity of the otherwise locally stable zero equilibrium, where higher
$l$ makes the transient period shorter.

\begin{figure}[ht]
\centering
\vspace{-25mm}
\includegraphics[height=.3\textheight]{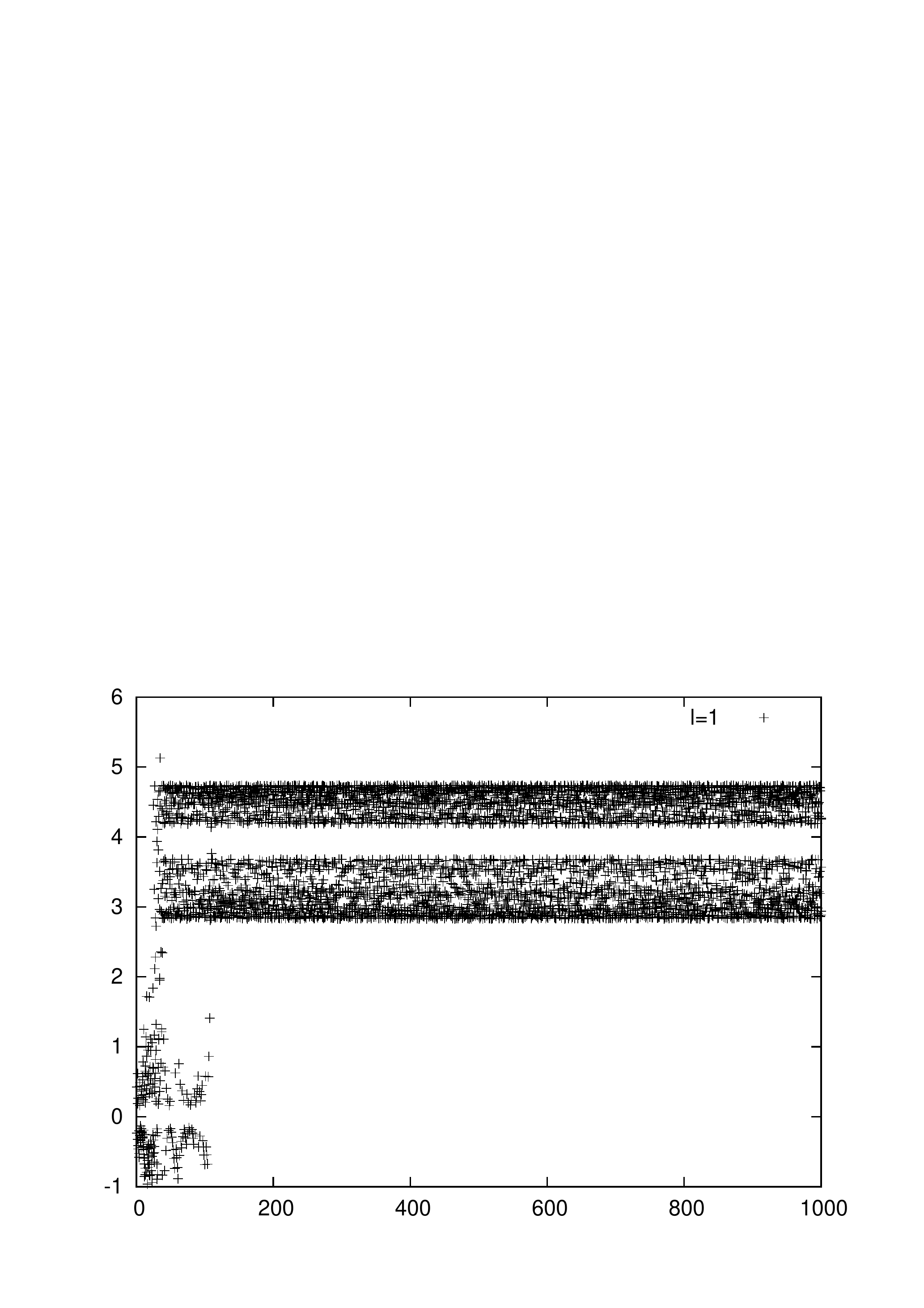}
\includegraphics[height=.3\textheight]{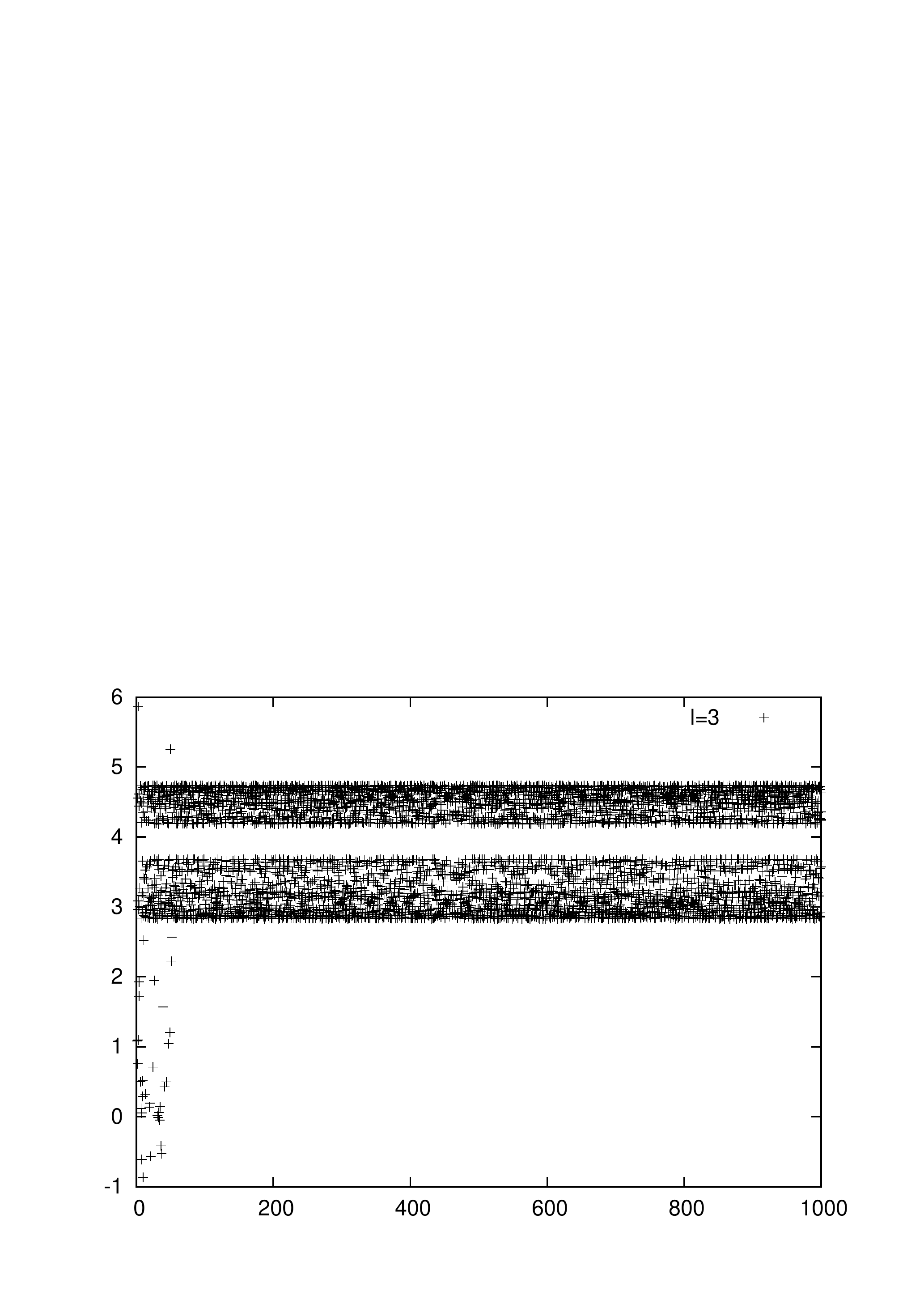}
\includegraphics[height=.3\textheight]{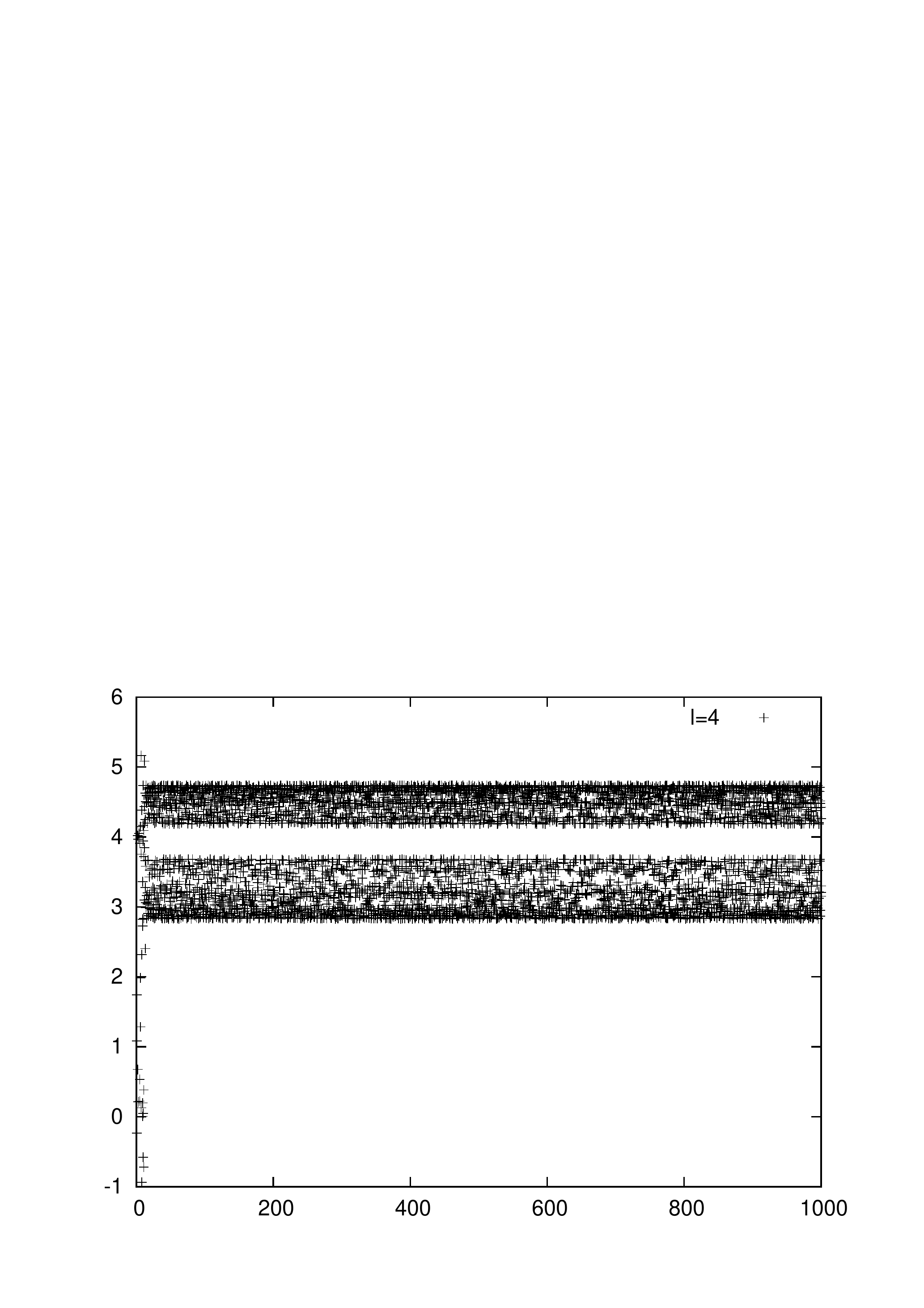}
\caption{Five runs for the discrete uniform distribution $l=1,3,4$, $x_0=0.3$.}
\label{figure20}
\end{figure}
\ee
\end{example}

Finally, we deal with destabilization of a positive equilibrium. 

\begin{example}
\label{ex:forlena7}
Consider the equation
 \[
     x_{n+1}=  F(x_n)+(x_n-3)\sigma_4(x_n-3)\xi_{n+1}, \quad x_0\in \mathbb R,
\]
where 
\begin{equation}
\label{def:FinstabK3}
F(x):=\left \{
  \begin{array}{cc}
 \frac {8.25x}{7.25+(x-2)^2} ,& \quad x < 3, \vspace{2mm} \\
   \frac {3(x-3)}{2+(x-6)^2}+3, & \quad x\ge 3.
  \end{array}
\right.
\end{equation}
The function $F$ in \eqref{def:FinstabK3} has five fixed points, $0,1,3,5,7$, where zero and $K=3$ are stable only.
It is easy to check that $F(x)$, $f(x)=F(x)/x$ and $F'(x)$ are continuous on $[0,\infty)$.
We destabilize $K=3$, where $\xi$ is as in \eqref{def:dud} with $l=2$, so, by Remark \ref{rem:cond2122} destabilization is  achieved if $\sigma_4(x)$  satisfies \eqref{ineq:sdud}. However,  Fig.~\ref{figure11} illustrates that  \eqref{ineq:sdud} is not sharp:
for $\sigma = 1.2$ the equilibrium $K=3$ is still stable (left),  for $\sigma=1.8$ we observe bistability and destabilization of $K=3$ for $\sigma=1.9$.



\begin{figure}[ht]
\centering
\vspace{-25mm}
\includegraphics[height=.3\textheight]{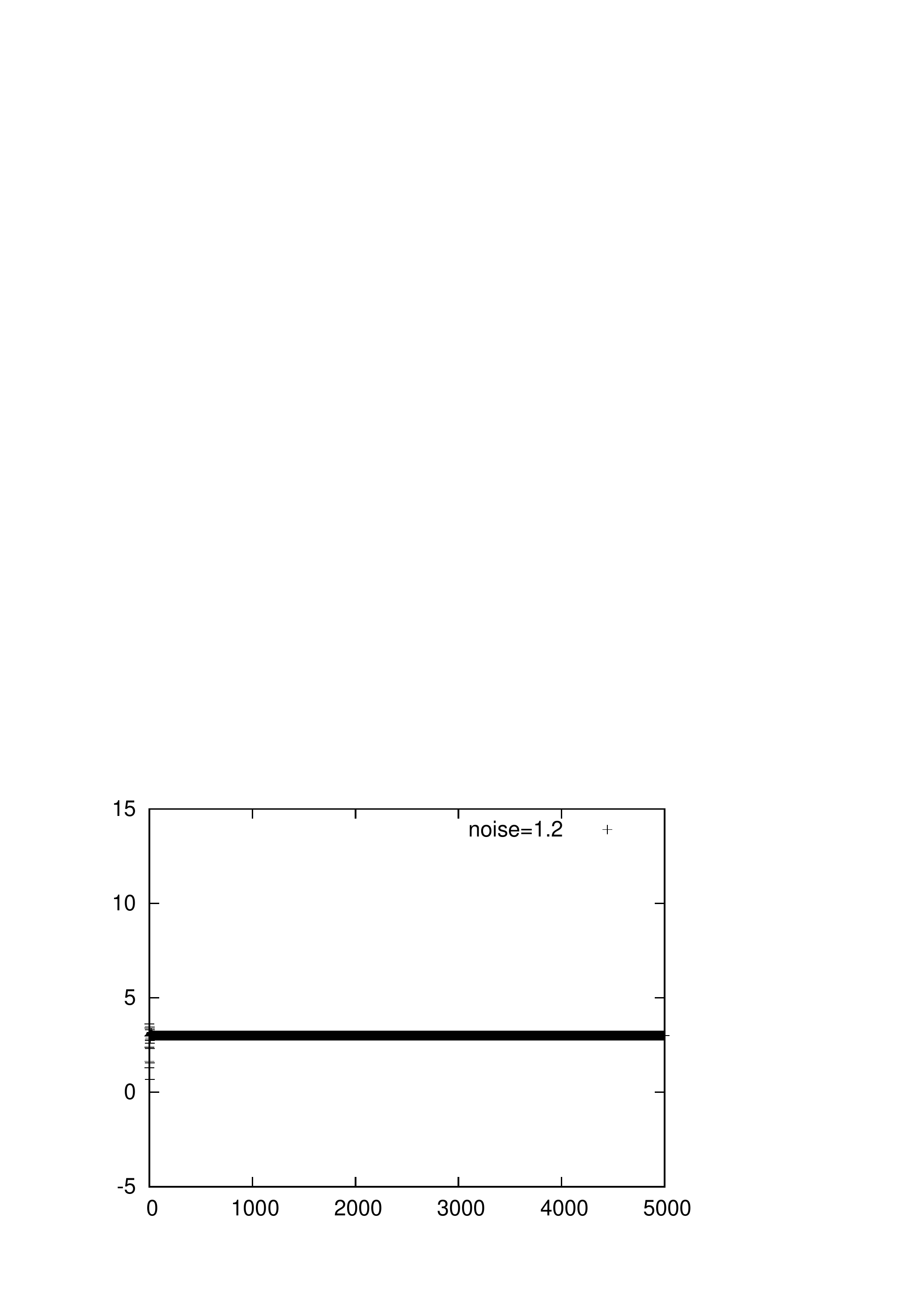}
\includegraphics[height=.3\textheight]{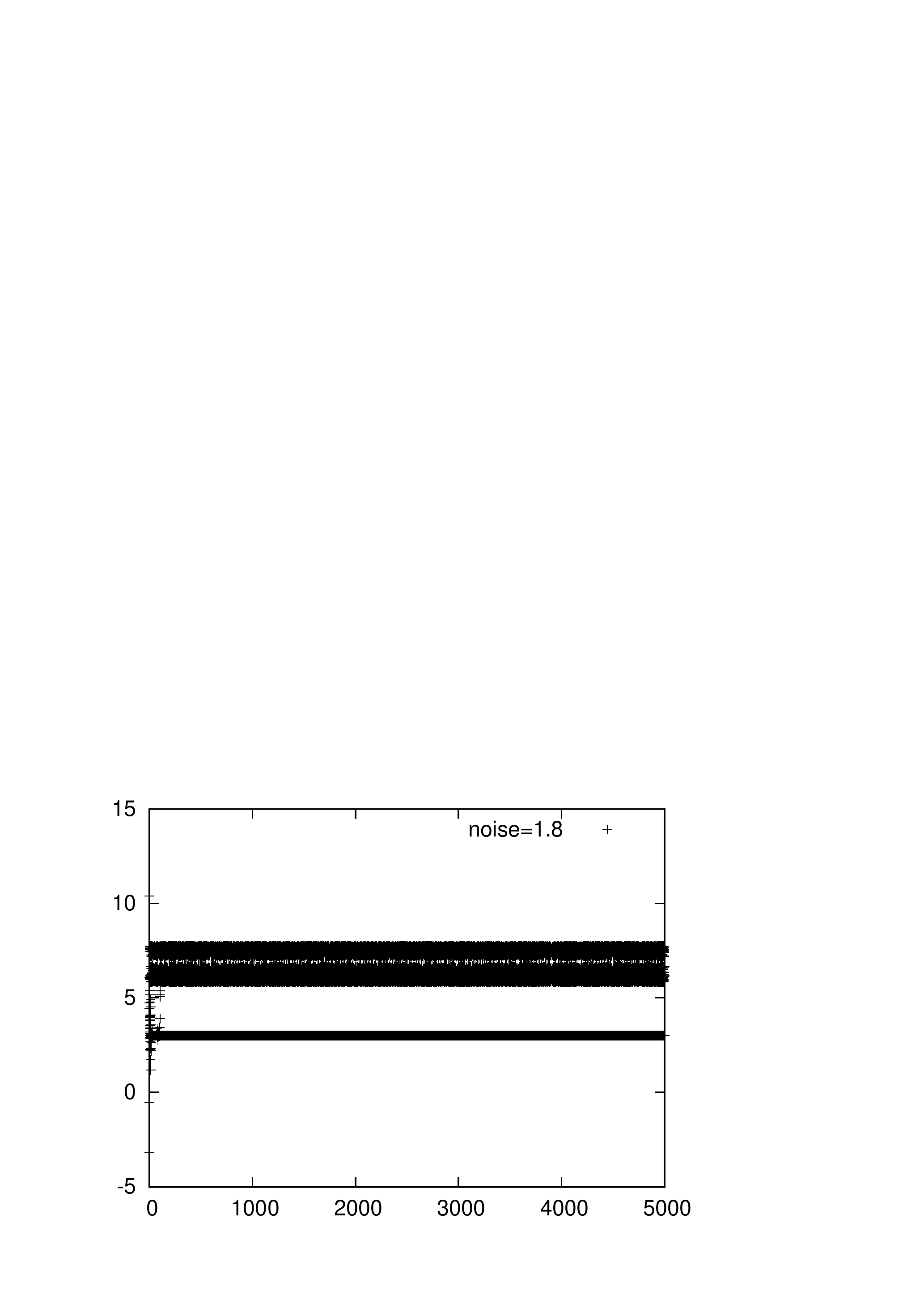}
\includegraphics[height=.3\textheight]{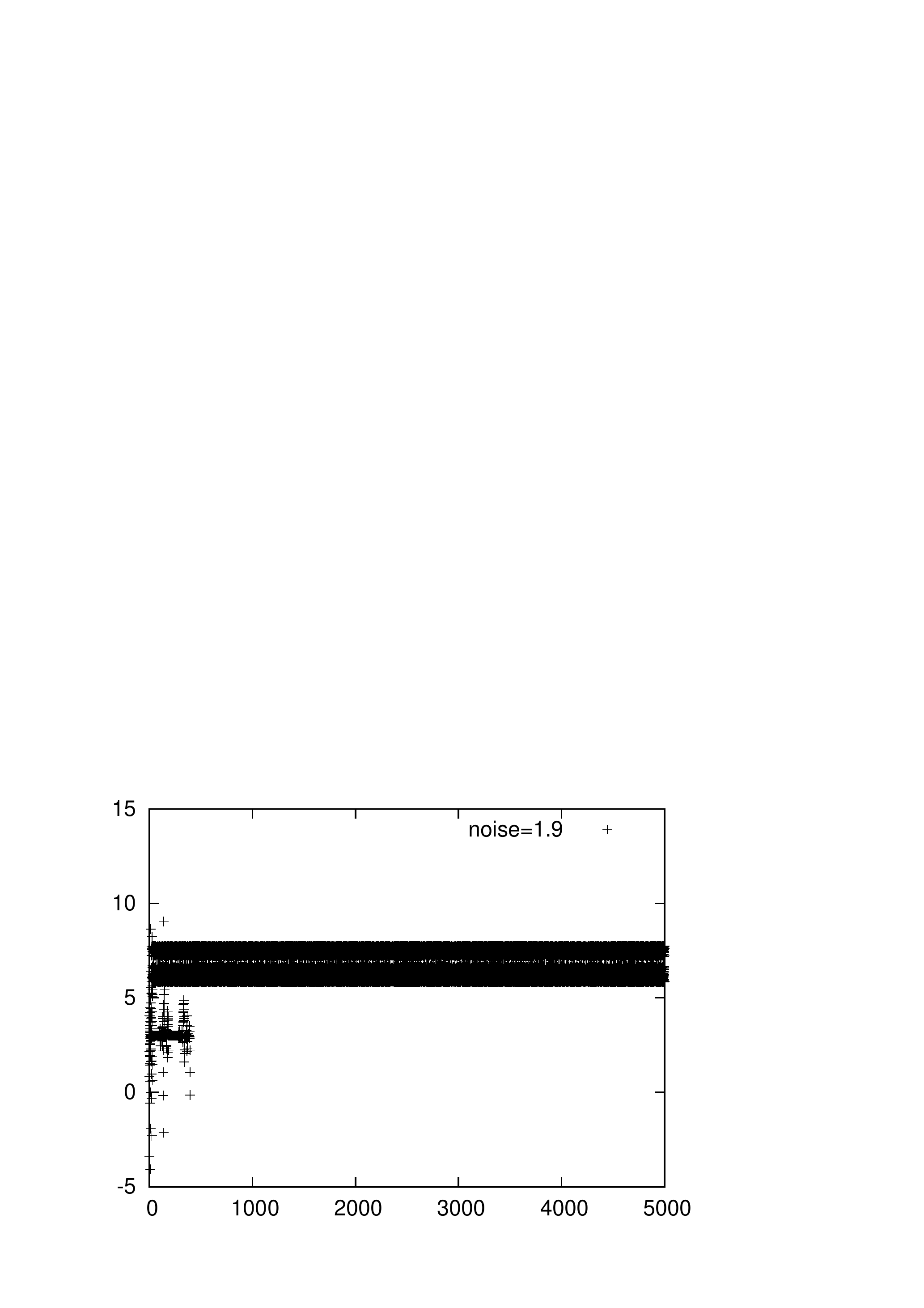}
\caption{
Five runs for the perturbed modified Beverton-Holt equation with $F$ as in \eqref{def:FinstabK3}, noise  \eqref{def:dud} with $l=2$,
$x_0=0.8$, and (left) $\sigma=1.2$ with fast
convergence to the positive equilibrium $K=3$, (middle) $\sigma=1.8$  with bistability and (right) $\sigma=1.9$ with destabilization.
}  
\label{figure11}
\end{figure}

In Fig.~\ref{figure11a}, we explore the possibility of introducing noise for $\sigma=1.9$ not everywhere on $(-\infty,4]$ but, first,
on $[1,4]$, then on 
$[1.5,3.5]$, $[2.2,3.8]$ and, finally, on $[2.5,3.5]$ only.  
Here $x_0=2.0$, to avoid immediate attraction to zero for 
$x_0=0.8$.

\begin{figure}[ht]
\centering
\vspace{-25mm}
\includegraphics[height=.25\textheight]{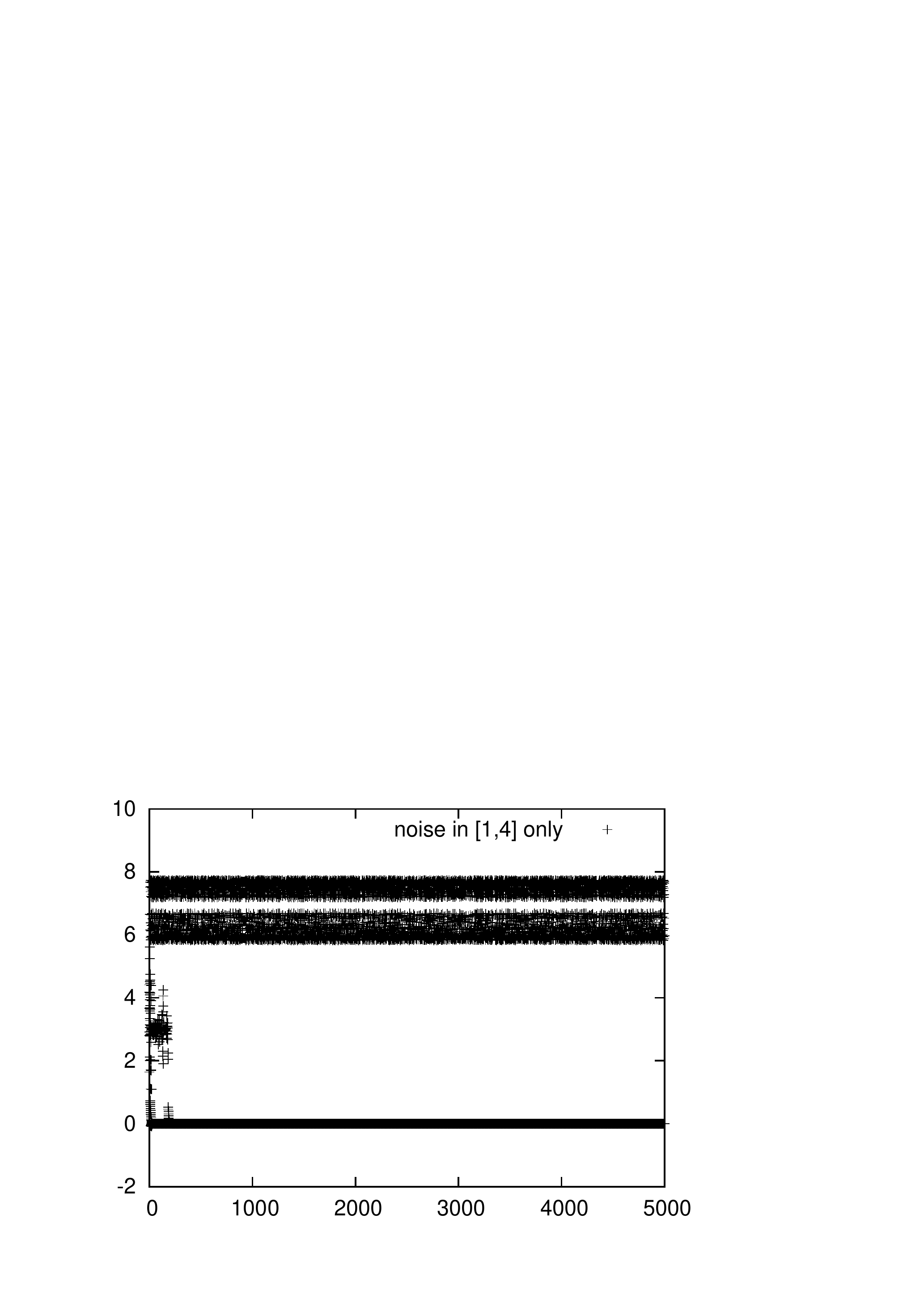}
\includegraphics[height=.25\textheight]{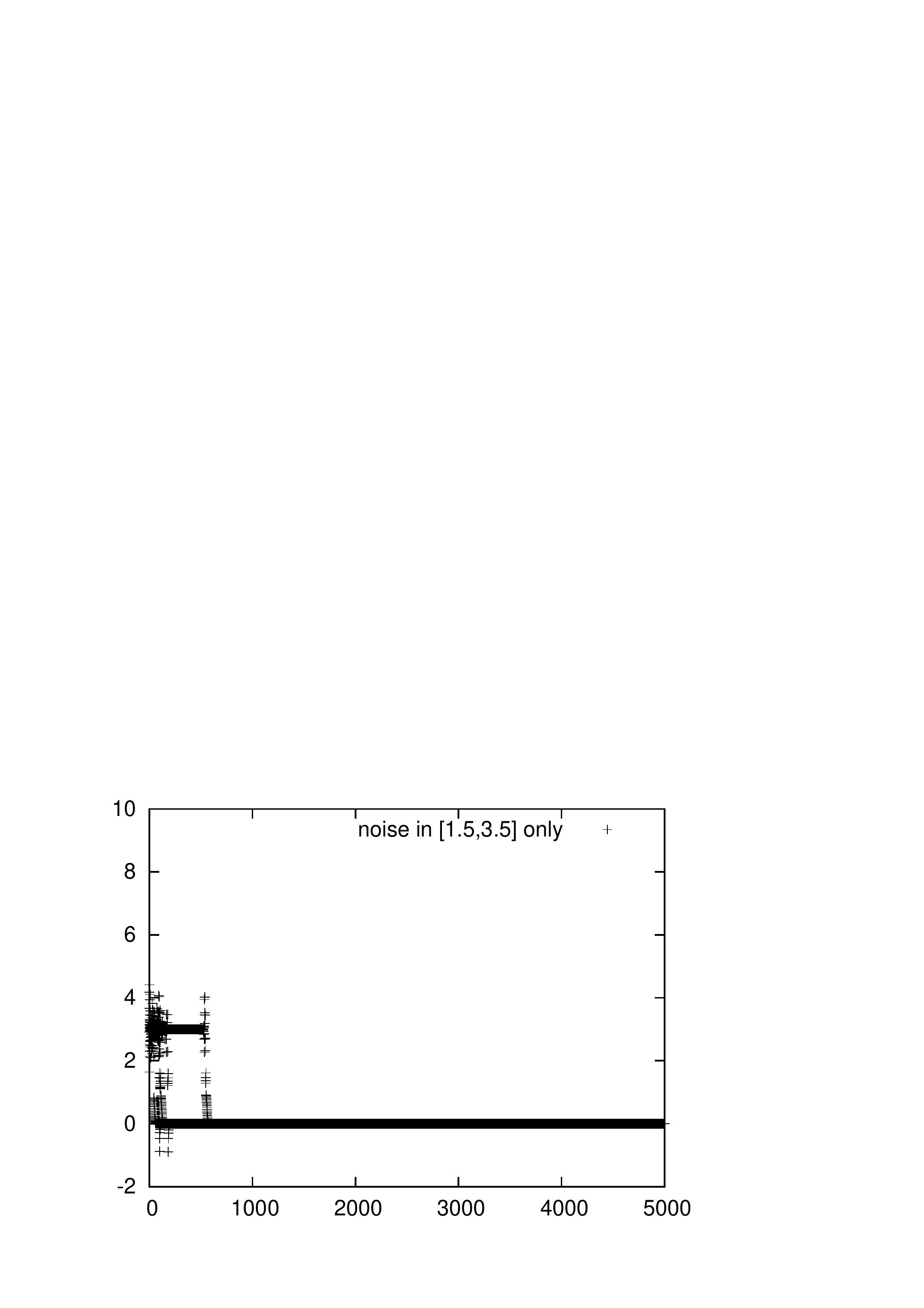}
\includegraphics[height=.25\textheight]{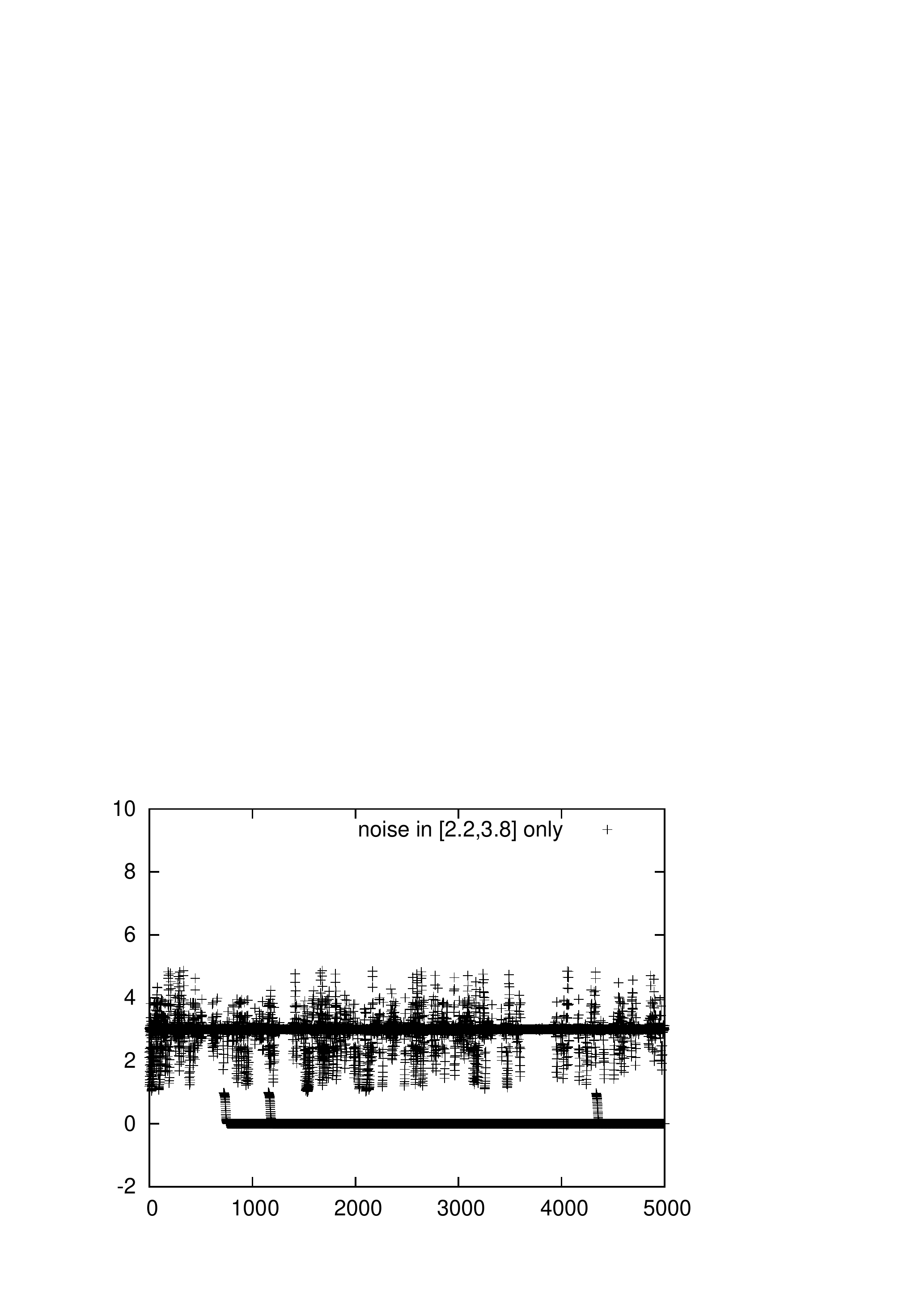}
\includegraphics[height=.25\textheight]{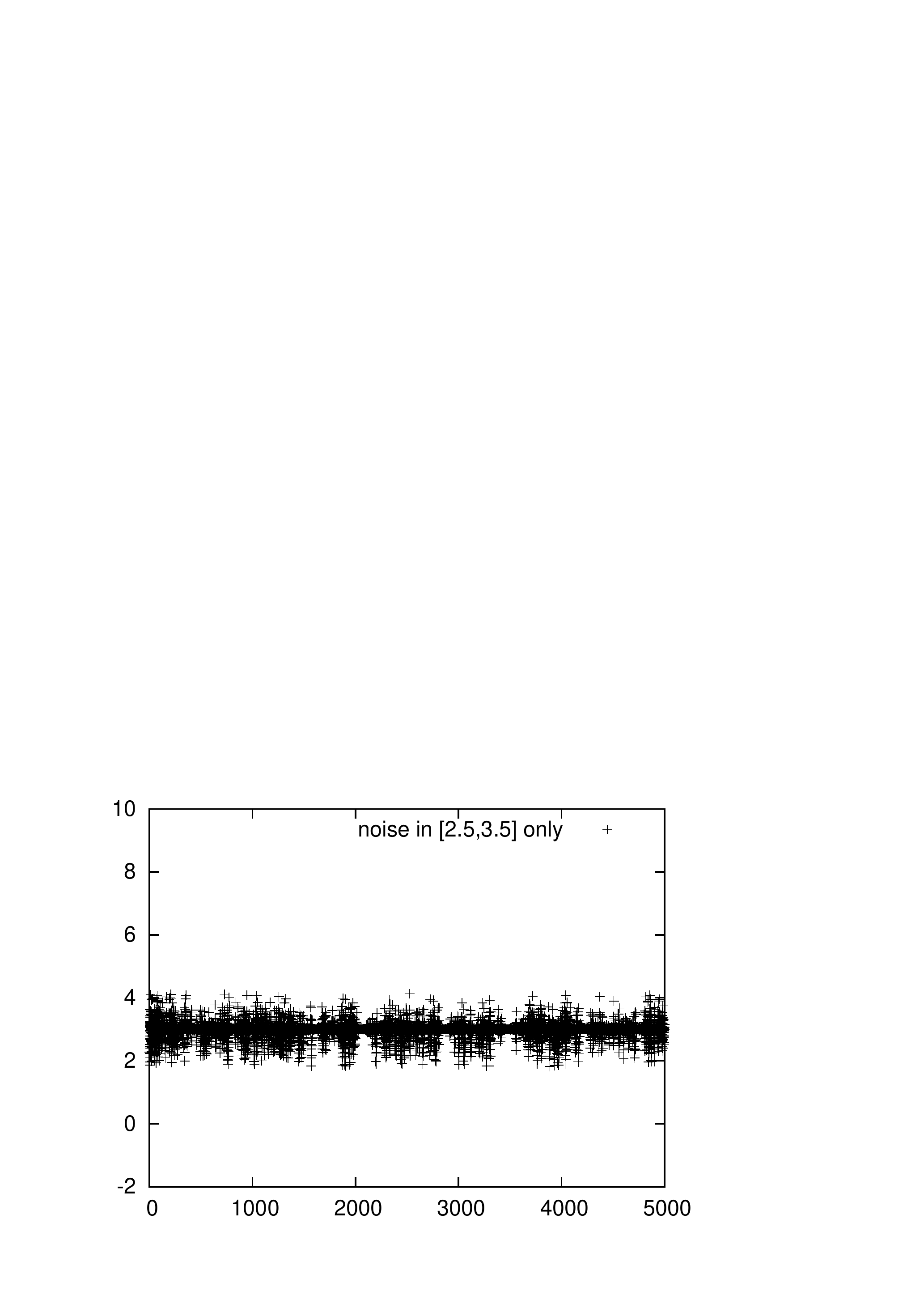}
\caption{
Five runs for the perturbed modified Beverton-Holt equation with $F$ as in \eqref{def:FinstabK3}, noise  \eqref{def:dud} with $l=2$,
$x_0=2$, and zero noise outside $[1,4]$, 
$[1.5,3.5]$, $[2.2,3.8]$  and $[2.5,3.5]$.
}
\label{figure11a}
\end{figure}
\end{example}

\section{Summary}
\label{sec:sum}

In the present paper, we obtained sufficient conditions for stabilization of an unstable equilibrium, as well as for destabilization of 
an otherwise stable equilibrium. Estimates are closely connected to a class of noise distributions. Numerical simulations not only 
illustrate applicability of obtained noise lower bounds but also evaluate sharpness of results: in certain cases, sufficient estimates are close to necessary,
in other cases for significantly smaller noise amplitudes, either stabilization or destabilization is achieved in a reasonable time.

Two possible further directions of this study include, first, exploring transient behaviour of solutions and, second, investigating equations with more 
than two equilibrium points. The first problem is very important in practical applications, for example, to develop explicit estimates of 
time when a solution with a certain $x_0$ enters a given neighbourhood of an equilibrium with a probability exceeding 
$1-\gamma$, for any 
prescribed $\gamma \in (0,1)$. Some pilot simulations  in the second direction are provided in Example~\ref{ex:forlena7}, where among the 
zero and four positive equilibrium points, two (0 and 3) were stable. Destabilization of the stable positive equilibrium, when achieved, 
either attracted a solution to the zero equilibrium or took a solution out of the domain of attraction of the positive equilibrium. 

Also, so far we considered stabilization and destabilization of equilibrium points, though cyclic behaviour is quite common in observations of population dynamics, and treating these cycles rather than just fixed points is an essential direction of further research.

Another important extension of our results is stabilization of otherwise unstable fixed points in models described by systems of difference equations. 


\section*{Acknowledgments}

The authors are grateful to Prof. Stefan Siegmund who attracted our attention to this problem, as well as to organizers of the two 
International Conferences on Difference Equations and Applications (ICDEA-2017 in Timi\c{s}oara, Romania 
and ICDEA-2018 in Dresden, Germany) who supported both authors and provided a collaboration opportunity to start and progress the current  research. The first author was also partially supported by the NSERC research grant RGPIN-2015-05976. The authors are also grateful to the anonymous reviewers whose valuable comments significantly contributed to a better presentation of the results of the paper.



\section{Appendix}
\label{sec:Ap}

\begin{proof} [Proof of Lemma \ref{lem:topor}]
Consider a sequence of events $(A_k)_{k\in \mathbb N}$, where
\[
A_k=\left\{\xi_{k J+i}\in [a_i, b_i], \quad  i=1, 2,  \dots, J\right\}.
\]

The events $A_k$ are independent due to independence of $\xi_n$ and since $A_k$ and $A_m$, for $k\neq m$, 
contain $\xi$ with different indices.  
Also, by independence of $\xi_n$ and $p_i>0$, $i=1, 2,  \dots, J$, for each $k\in \mathbb N$,
$\displaystyle
\mathbb P\{A_k\}=\prod_{i=1}^{ J}p_i>0$, and thus $\displaystyle \sum_{k=1}^\infty\mathbb P\{A_k\}=\infty$.
So, by Borel-Cantelli Lemma~\ref{lem:BC}, \,\, 
$\displaystyle
\mathbb P\{A_k \,\,  \text{occurs infinitely often}\}=1$.
Denoting
$
\displaystyle
\mathcal N:=\min\left\{ k: A_k \,\, \text{occurs} \right\}
$,
we conclude the proof.
\end{proof}

\begin{proof}[Proof of Lemma \ref{lem:genstab0}]
A solution of \eqref{eq:addstab} can be presented as 
  \begin{equation*}
  \label{eq:stabproductrepr}
  x_{n+1}=x_0\prod_{i=0}^{n}f(x_i)\prod_{i=0}^{n}\left( 1+\sigma \xi_{i+1} \right), 
   \end{equation*}
  so, by Assumption \ref{as:fBound}, we have
   \begin{equation}
  \label{eq:stabproductest11}
  |x_{n+1}|= |x_0|\prod_{i=0}^{n}|f(x_i)|e^{\sum_{i=0}^{n}\ln |1+\sigma \xi_{i+1}|}\le |x_0|H^{n+1}e^{\sum_{i=0}^{n}\ln |1+\sigma \xi_{i+1}|}. 
   \end{equation}  
 By Kolmogorov's Law of Large Numbers, Lemma~\ref{thm:Kolm}, with $v_n=\ln |1+\sigma \xi_{n+1}|$, we obtain that for any 
$\varepsilon\in (0, \eta)$ there exists an a.s. finite $\mathcal N=\mathcal N(\varepsilon)$ such that, for all $n\ge \mathcal N$, ~
$-(n+1)(\eta+\varepsilon)\le \sum_{i=0}^{n}\ln |1+\sigma \xi_{i+1}|\le -(n+1)(\eta-\varepsilon)$.

Substituting this into \eqref {eq:stabproductest11}, we conclude that, for all $n\ge \mathcal N$, a.s.,
 \begin{equation*}
  \label{eq:stabproductest2}
  |x_{n+1}|\le |x_0|H^{n+1}e^{ -(n+1)(\eta-\varepsilon)}=|x_0|\left[He^{ -(\eta-\varepsilon)}\right]^{n+1}  . 
  \end{equation*}  
 Choosing  $\varepsilon< \varepsilon_1:= \eta - \ln H$
we get, for all $n\ge \mathcal N$, a.s.,
\[
 |x_{n+1}|\le |x_0|\left[e^{ \ln H-\eta+\varepsilon}\right]^{n+1}= |x_0|e^{ -(\varepsilon_1-\varepsilon)(n+1)},
  \]
which implies  the necessary result.
\end{proof}

\begin{proof}[Proof of Lemma~\ref{lem:Eln123}]
(i) We have
\begin{equation*}
\begin{split}
&\mathbb E\ln (1+\xi_{i+1})=\frac{2s+1}2\int_{-1}^1\ln(1+x)x^{2s}dx\\= &\frac{2s+1}2\left[\ln (1+x) \frac{x^{2s+1}}{2s+1}\biggr|_{-1}^1-\frac1{2s+1}\int_{-1}^1\frac{x^{2s+1}+1-1}{1+x}dx\right]\\= &
\frac{1}2\left[2\ln 2-\lim_{x\to -1}\ln (1+x)(x^{2s+1}+1)-\int_{-1}^1\left(x^{2s}-x^{2s-1}+\dots +1\right]dx\right)
\\= &
\ln 2-\left[ \frac{1}{2s+1}+\frac{1}{2s-1}+\dots +1\right],
\end{split}
\end{equation*}
since $\lim_{x\to -1}[(x+1)\ln (1+x)]=0$ and, therefore, 
\[
\lim_{x\to -1}\ln (1+x)(x^{2s+1}+1)=\lim_{x\to -1}[\ln (1+x)(x+1)]\left[x^{2s}-x^{2s-1}+\dots +1\right]=0.
\]
As
$\displaystyle
\sum_{s=0}^\infty\frac 1{2s+1}=\infty$,
for each $H$ we can find $s\in n\in {\mathbb N}_0$ such that
\[
\frac{1}{2s+1}+\frac{1}{2s-1}+\dots +1>\ln (2H),
\]
which implies
\[
e^{-\mathbb E\ln (1+\xi_{i+1})}=e^{-\ln 2+\left[ \frac{1}{2s+1}+\frac{1}{2s-1}+\dots +1\right]}>\frac 12e^{\ln (2H)}=H.
\]

(ii) (a) Consider discrete distribution \eqref{def:dud} with $2l$ states. For some $\sigma\in (0, 1)$, 
\begin{equation}
\label{calc:Ekxidiscr}
\begin{split}
-\eta= & \mathbb E\ln (1+\sigma \xi_{i+1})=\frac1{2l}\sum_{i=0}^{2l-1}\ln \left[1+\sigma\left(-1+\frac{2i}{2l-1} \right)\right]\\=
& \frac1{2l} \left[\ln (1-\sigma^2)+\sum_{j=1}^{l-1}\ln \left[1-\sigma^2\left(-1+\frac{2j}{2l-1} \right)^2\right]\right],
\end{split}
\end{equation}
as 
$\displaystyle
\ln \left[1+\sigma\left(-1+\frac{2j}{2l-1} \right)\right]+\ln \left[1-\sigma\left(-1+\frac{2j}{2l-1} \right)\right]$ 
$\displaystyle =\ln \left[1-\sigma^2\left(-1+\frac{2j}{2l-1} \right)^2\right]
$
for all $j=1, \dots, l-1$. Note that all the expressions under the logarithms are positive since $\sigma \in (0,1)$ and
$\displaystyle
1+\sigma\left(-1+\frac{2i}{2l-1}\right)>1-\sigma >0$.
Every term of  the sum in  the last line in \eqref{calc:Ekxidiscr} is negative, so $
-\eta=\mathbb E\ln (1+\sigma \xi_{i+1})< \frac1{2l} \ln (1-\sigma^2). $
Since 
$\displaystyle 
\lim_{\sigma\to 1^-}\frac1{2l} \ln (1-\sigma^2)=-\infty$,
for each $H>1$ we can choose $\sigma=\sigma_{l, H}$  sufficiently close to 1 such that $\eta>\ln H. $ Indeed,
$\sigma>\sqrt{1-H^{-2l}}$ implies $\eta>\ln H$.
So we can set $\sigma_{l, H}\in \left(\sqrt{1-H^{-2l}}, 1\right)$. Here without loss of generality we assume that $H>1$, see Remark \ref{rem:Hlambda}.

\medskip


(ii) (b) Consider now piecewise continuous uniform distribution \eqref{def:contdud} with the parameters 
$l\in \mathbb N$ and $\delta>0$. We assume again  that $\sigma\in (0, 1)$. Further,   
\begin{equation}
  \label{calc:Ealphacontdud}
  \begin{split}
\mathbb E \ln [1+\sigma \xi_{n+1}]  = & \frac 1{2\delta(2l-1)}\sum_{i=1}^{2l-2}\int\limits_{-1+\frac 
{2i}{2l-1}-\delta}^{  -1+\frac 
{2i}{2l-1}+\delta}\ln (1+\sigma s)ds
\\&
+\frac1{2\delta (2l-1)}\int\limits_{-1}^{-1+\delta}\ln(1+\sigma s) ds
+\frac1{2\delta (2l-1)}\int\limits_{1-\delta}^{1}\ln( 1+\sigma s) ds.
\end{split}
\end{equation}
By the Mean Value Theorem,
\\
$\displaystyle \frac1{2\delta }\int\limits_{-1}^{-1+\delta}\ln(1+\sigma s) ds
=\frac{\ln(1-\sigma +\sigma \delta \theta_1)}{2}$
~and~ 
$\displaystyle \frac1{2\delta}\int\limits^{1}_{1-\delta}\ln(1+\sigma s) ds
= \frac{\ln(1+\sigma -\sigma \delta \theta_2)}{2}$,
for some $\theta_1, \theta_2\in [0,1]$. The sum of the two  above terms equals
\begin{equation*}
\frac{\ln [(1+\sigma -\sigma \delta \theta_2)(1-\sigma +\sigma \delta \theta_1)] }{2\delta}
=\frac{\ln\biggl[1-\sigma^2 +\delta \tau\left(\delta, \sigma, \theta_1, \theta_2 \right)\biggr]}{2}
\end{equation*} 
and, since $\delta, \sigma, \theta_1, \theta_2\in (0, 1)$,  we have the estimate 
$\displaystyle
|\tau(\delta, \sigma, \theta_1, \theta_2)|\le 3$.

Thus, for each $H>0$, by making  $\sigma$  close to 1 and $\delta$ small, we can get 
\[
\frac{\ln(1-\sigma^2 +\delta \tau)}{2(2l-1)}<-\ln H.
\]
In a similar way, by applying the Mean Value Theorem, grouping  the sum terms of the integrals in \eqref{calc:Ealphacontdud} and making $\delta$ small enough, we  can guarantee that each group  is negative:
$$
\ln \left[1+\sigma\left(-1+\frac{2j}{2l-1} +\delta \theta_{1,j}\right)\right]+\ln \left[1-\sigma\left(-1+\frac{2j}{2l-1}+\delta \theta_{2,j} \right)\right]$$ $$=\ln \left[1-\sigma^2\left(1-\frac{2j}{2l-1} \right)^2+\delta \tau_{j}\right]<0,
$$
where $|\tau_j|=|\tau_j(\delta, \sigma, \theta_{1,j}, \theta_{2,j})| \leq 5$. Thus, for $1-\sigma$ and $\delta$ small enough, we have
\[
\mathbb E \ln (1+\sigma \xi_{n+1}) \le \frac{\ln(1-\sigma^2 +\delta \tau)}{2(2l-1)}<-\ln H.
\]
\end{proof}


\begin{proof}[Proof of Lemma ~\ref{lem:destab}]
We have, for $\alpha\in (0, 1]$, for all $n\in n\in {\mathbb N}_0$,
\begin{equation}
\label{calc:unstab2}
|x_{n+1}|^{\alpha}
=|x_0|^{\alpha}\frac 1{M_n} \prod_{i=0}^n\frac 1{\mathbb E\left[ |f(x_i)+\sigma(x_i) \xi_{i+1}|^{-\alpha}\biggr| \mathcal F_i\right] },
\end{equation}
where
\begin{equation}
\label{def:mart}
M_n:=\prod _{i=0}^n\frac { |f(x_i)+\sigma(x_i) \xi_{i+1}|^{-\alpha}}{\mathbb E\left[ |f(x_i)+\sigma(x_i) \xi_{i+1}|^{-\alpha}\biggr| \mathcal F_i\right] }.
\end{equation}
By the assumptions  of the lemma all the expressions in \eqref {calc:unstab2} and \eqref {def:mart} are well defined, 
by Lemma~\ref{lem:martprod}, $(M_n)_{n\in \mathbb N}$ is a nonnegative $\mathcal F_n$-martingale. So Lemma \ref{lem:nnM}  implies that $M_n$ converges to an a.s. finite limit.

By condition \eqref{cond:destab22},  each factor 
 $
  \left(\mathbb E\left[ |f(x_i)+\sigma(x_i) \xi_{i+1}|^{-\alpha}\biggr| \mathcal F_i\right] \right)^{-1}
   $
exceeds 1, and  $\frac 1{M_n}$ converges to an a.s. nonzero limit, thus, the solution $x_n$  can be estimated from below by an a.s. 
finite positive random variable.

\end{proof}

\begin{proof}[Proof of Theorem~\ref{lem:Ealpha123}]
 
 (i).   Note that since $\xi_n$  are continuous noises and, for each $n\in \mathbb N$,  $\xi_{n+1}$  is independent of $x_n$, the probability that 
$|f(x_n)+\sigma(x_n) \xi_{n+1}|=0$ is zero, so the expression $|f(x_i)+\sigma(x_i) \xi_{i+1}|^{-\alpha}$ is well defined.

{\it Case $s=0$ (uniform distribution).}  We have 
\begin{equation*}
\begin{split}
&\mathbb E\left[ |f(x_i)+\sigma(x_i) \xi_{i+1}|^{-\alpha}\biggr| \mathcal F_i\right]=\frac 12\int_{-\frac f\sigma}^1\frac{dv}{(f+\sigma v)^{\alpha}}+\frac 12\int_{-1}^{-\frac f\sigma}\frac{dv}{(-f-\sigma v)^{\alpha}}\\&=
\frac 1{2\sigma}\left[\frac{(f+\sigma)^{1-\alpha}}{1-\alpha} +\frac{(\sigma-f)^{1-\alpha}}{1-\alpha}  \right]=\frac {\sigma^{1-\alpha}}{2\sigma(1-\alpha)}\left[\left(1+\frac f\sigma\right)^{1-\alpha}+\left(1-\frac f \sigma\right)^{1-\alpha}\right],
\end{split}
\end{equation*}
where  we omit $x_i$  for simplicity and set $f:=f(x_i)$,  $\sigma:=\sigma(x_i)$. Applying  the inequality
$\displaystyle (1+y)^\alpha\le 1+\alpha y$ for $-1<y<\infty$ and $0<\alpha<1$,
we get
\[
\left(1+\frac f\sigma\right)^{1-\alpha}+\left(1-\frac f \sigma\right)^{1-\alpha}\le1+(1-\alpha)\frac f\sigma+1-(1-\alpha)\frac f\sigma= 2.
\]
This implies that
$\displaystyle
\mathbb E\left[ |f(x_i)+\sigma(x_i) \xi_{i+1}|^{-\alpha}\biggr| \mathcal F_i\right]\le \frac {1}{\sigma^\alpha(1-\alpha)}$. 
Then  \eqref{cond:destab22} holds for 
\[
\sigma(x_i)\ge \frac 1{\sqrt[\alpha]{1-\alpha}}.
\]
As
$\displaystyle
\lim_{\alpha \to 0}(1-\alpha)^{\frac 1\alpha}=e^{-1}$ 
and $f(\alpha)=(1-\alpha)^{\frac 1\alpha}$ decreases in $\alpha$,    
we have $(1-\alpha)^{\frac 1\alpha}<\frac{1}{e}$. Then 
\eqref{cond:destab22} holds if we choose
\begin{equation*}
\sigma(x)>\max \left\{f(x), e\right\}, \quad \forall x\in \mathbb R.
\end{equation*}
Indeed,  if $\inf_{x\in \mathbb R}\sigma (x)>e$ we can find $\alpha\in (0, 1)$ such that 
$\displaystyle
\frac 1{\sqrt[\alpha]{1-\alpha}}\in \left(e, \, \inf_{x\in \mathbb R}\sigma (x)\right)$
and apply Lemma \ref{lem:destab} for this particular $\alpha$.


{\it Case of arbitrary $s\in \mathbb N$.} We have
\begin{equation*}
\mathbb E\left[ |f(x_i)+\sigma(x_i) \xi_{i+1}|^{-\alpha}\biggr| \mathcal F_i\right]=\frac {2s+1}2\int\limits_{-\frac f\sigma}^1\frac{x^{2s}dx}{(f+\sigma x)^{\alpha}}+\frac {2s+1}2\int\limits_{-1}^{-\frac f\sigma}\frac{x^{2s}dx}{(-f-\sigma x)^{\alpha}}.
\end{equation*}
Integrating by parts and doing several estimations, we conclude that 
\eqref{cond:destab22} holds if we choose, for some $\alpha\in (0, 1)$,
\begin{equation}
\label{cond:discrcontsigma}
\sigma(x)>\max \left\{f(x), \, \sqrt[\alpha]{\frac{2s+1}{1-\alpha}}\, \right\},~~ \forall x\in \mathbb R.
\end{equation}
Since for each  $s\in \mathbb N$ we have $\lim\limits_{\alpha\to 0} \sqrt[\alpha]{2s+1}=1$, we can decrease $\alpha$ to get 
$
\displaystyle \sqrt[\alpha]{\frac{2s+1}{1-\alpha}} \in \left(e, \, \inf_{x\in \mathbb R}\sigma (x)\right).
$
So condition \eqref{cond:discrcontsigma} holds, if inequality \eqref{cond:fe} is satisfied and $\alpha\in (0, 1)$ is sufficiently small.


 (ii) {\it Case \eqref{def:dud}}.   Assume that \eqref{ineq:sdud} holds, i.e.
$\displaystyle \sigma(x)> (2l-1)\left[\frac{1+\sqrt{1+4 f^2(x)}}{2}\right]$, ~$\forall x\in \mathbb R$.
As $\sigma(x)>0$ and $i=0, 1, \dots, l-1$, 
$\displaystyle \frac {2l-1-2i}{2l-1}\sigma(x)\ge \frac{\sigma(x)}{2l-1}$.
Then,  for each $x\in \mathbb R$, $i=0, 1, \dots, l-1$, and $\xi$ from \eqref{def:dud},
\[
\left| f(x)+\left(1-\frac {2i}{2l-1}\right)\sigma(x)\xi\right|\ge \frac{\sigma(x)}{2l-1}-|f(x)| > \frac{1+\sqrt{1+4 f^2(x)}}{2}-|f(x)|\ge \frac 12.
\]
So  the expression
\begin{equation}
\label{expr:-1}
\left| f(x)+\left(1-\frac {2i}{2l-1}\right)\sigma(x)\xi\right|^{-1}
\end{equation}
is  well defined for each $x\in \mathbb R$, $i=0, 1, \dots, l-1$.   Note that positive solutions of the inequality $z^2-z-f^2(x)>0$ satisfy $z>\frac{1+\sqrt{1+4f^2(x)}}{2}$. 
By \eqref{ineq:sdud} we have that for each $x\in \mathbb R$  and   $i=0, 1, \dots, l-1$,
$$z:=\left(\frac {2l-1-2i}{2l-1}\right)\sigma(x) > (2l-1-2i)\left[\frac{1+\sqrt{1+4 f^2(x)}}{2}\right]\ge \frac{1+\sqrt{1+4 f^2(x)}}{2},
$$ 
which implies that $z=\left(\frac {2l-1-2i}{2l-1}\right)\sigma(x)$ satisfies the  inequality $z^2-z-f^2(x)>0$.
Then,  for each $x\in \mathbb R$ and $i=0, 1, \dots, l-1$  
\begin{equation*}
\begin{split}
&\left[ f(x)+\left(\frac {2l-1-2i}{2l-1}\right)\sigma(x)\xi
\right]^{-1}+\left[\left(\frac {2l-1-2i}{2l-1}\right)\sigma(x)\xi- f(x)\right]^{-1}\\=&\frac {2\left(\frac {2l-1-2i}{2l-1}\right)\sigma (x)}{\left(\frac {2l-1-2i}{2l-1}\right)^2\sigma^2(x)- f^2(x)}<2.
\end{split}
\end{equation*}
And, therefore, for each $n\in \mathbb N$,  
\begin{equation}
\begin{split}
\label{calc:Edud}
&\mathbb E\left[|f(x_n)+\sigma(x_n) \xi_{n+1}|^{-1}\bigl|\mathcal F_n\right]\\ =& \frac 1{2l}\sum_{i=0}^{2l-1}
\left|  f(x_n)+\left(1-\frac {2i}{2l-1}\right)\sigma(x_n)\xi_{n+1}\right|^{-1}
<\frac 1{2l}\sum_{j=0}^{l-1} 2=\frac {2l}{2l}=1,
\end{split}
\end{equation}
which proves the result.


{\it Case \eqref{def:contdud}. } Let $u:[0, H] \to {\mathbb R}$ be defined as
\begin{equation*}
u(f):=\frac {1+\sqrt{1+4f^2}}2,  \quad f\in  [0, H],
\end{equation*}
which is a continuous and increasing on $[0, H]$ function, so its maximum 
$\displaystyle u(H) =\frac {1+\sqrt{1+4H^2}}2$
is reached at the right end of the interval  $[0, H]$. Let also
\begin{equation*}
\mathcal B:=\left\{(f, u): \frac {1+\sqrt{1+4f^2}}2\le u \le 2u(H), \quad f \in [0, H] \right\},
\end{equation*}
\begin{equation*}
\mathcal A:=\mathcal B\times [-0. 25, \, 0.25]\times [-0.25,\, 0.25] \subset  \mathbb R^4.
\end{equation*}
Note that, for $f\in [0, H]$ and  $u\ge u(f)$,  we have
\begin{equation}
\label{est:u2f2}
\begin{split}
&u^2-f^2\ge \left[\frac {1+\sqrt{1+4f^2}}2\right]^2-f^2
=\frac 12+\frac{\sqrt{1+4f^2}}{2} \ge 1.
\end{split}
\end{equation}
Consider the function $G:\mathcal A\to \mathbb R^1$,  defined by 
\begin{equation}
\label{def:G}
G(f,u, y, z):=\frac{u+y}{u^2-f^2+z}, \quad (f,u, y, z)\in \mathcal A.
\end{equation}
By \eqref{est:u2f2} the denominator of $G$ is always positive on $\mathcal A$ and 
\[
G(f,u(f), 0, 0)=1 \quad \mbox{since} \quad  u^2(f)-u(f)-f^2=0.
\]
Since $u(f)$ is the largest  of the two solutions of the equation $u^2-u-f^2=0$, for each $f\in [0, H]$ and $u>u(f)$ we have 
\[
u^2-u-f^2>0, \quad \mbox{which implies} \quad G(f,u, 0, 0)<1.
 \]
Fix some $\lambda\in (0, u(H))$ and consider 
\begin{equation}
\label{def:calBAlambda}
\begin{split}
\mathcal B_\lambda:=&\left\{(f, u): \frac {1+\sqrt{1+4f^2}}2+\lambda \le u\le 2u(H), \quad f\in [0, H] \right\},\\ \mathcal A_\lambda:=&\mathcal B_\lambda\times [-0.25, \, 0.25]\times [-0.25,\, 0.25], \\
 \mathcal C_\lambda:=&\mathcal B_\lambda\times \{ 0\}\times \{0\}\subset \mathcal A_\lambda.
\end{split}
\end{equation}
Based on the above, we conclude that $G(f,u, 0, 0)<1$ on $\mathcal B_\lambda$. Define
\begin{equation*}
\varepsilon_{\lambda}:=\min_{(f, u)\in \mathcal B_\lambda}\left\{1-  G(f,u, 0, 0) \right\} >0,
\end{equation*}
since $\mathcal B_\lambda$ is a closed (and therefore compact) set, thus
$\displaystyle
G(f,u, 0, 0)<1-\varepsilon_{\lambda}, \quad (f, u)\in \mathcal B_\lambda$.  Since $A_\lambda$  is also a compact set, the function $G$ is uniformly continuous on $A_\lambda$. 
Thus there exists $\varsigma\in (0, \, 0.25)$ such that
\begin{equation}
\label{est:Glambda1}
G(x, t, y, z)-G(f, u, 0, 0)<\frac {\varepsilon_{\lambda}}2 \mbox{ ~ for ~ } \max\left\{ |x-f|, |t-u|, |y|, |z| \right\} < \varsigma.
\end{equation}
Then, for each $(f, u, y, z)\in \mathcal A_\lambda$, ~
$\displaystyle
G(f, u, y, z)<G(f, u, 0, 0)+\frac {\varepsilon_{\lambda}}2<1-\frac {\varepsilon_{\lambda}}2,
$
as soon as 
$
\max\{ |y|, |z| \} < \varsigma.
$
Let function $\sigma$ satisfy
\begin{equation}
\label{ineq:sdudcont}
2u(H)\ge \frac {\sigma(x)}{2l-1}\ge \frac{1+\sqrt{1+4 f^2(x)}}{2}+\lambda.
\end{equation}
Inequalities \eqref{ineq:sdudcont} and \eqref{def:calBAlambda}
imply  $\left( |f(x)|, \frac {\sigma(x)}{2l-1}\right)\in \mathcal B_\lambda$ for all $x\in \mathbb R$.

Assume first that 
$\displaystyle \delta< \frac1{2l-1}$,
which guarantees that intervals do not overlap.
Now we seek $\delta>0$ so small that on each of the intervals
\[
[-1, -1+\delta], \, [1-\delta, 1], \, \left[-1+\frac {2i}{2l-1}-\delta, \,  -1+\frac {2i}{2l-1}+\delta \right], \quad i=1, \dots l-1,
\]
we have $|f+\sigma \xi|>0$, where $\sigma$ satisfies \eqref{ineq:sdudcont}. Since the minimum value which $|\xi|$ takes is  $\frac 1{2l-1}-\delta$,
we have
\[
|f+\sigma \xi|\ge |\sigma \xi|-|f|\ge (2l-1)\left[ \frac{1+\sqrt{1+4 f^2(x)}}{2}+\lambda\right]\left[ \frac 1{2l-1}-\delta \right]-|f|.
\]
Applying the inequality $\frac{1+\sqrt{1+4 f^2(x)}}{2}>|f|+\frac 12$, we arrive at 
\begin{equation*}
\begin{split}
&\left[ \frac{1+\sqrt{1+4 f^2(x)}}{2}+\lambda\right]\left[ 1-\delta (2l-1)\right]-|f|
\\
\ge & -H\delta (2l-1)+ \left[\frac 12+\lambda\right]\left[ 1-\delta (2l-1)\right].
\end{split}
\end{equation*}
From the above we get an estimation on $\delta$ which guarantees positivity of $|f+\sigma \xi|$:
\begin{equation*}
\begin{split}
&\delta <\frac{\frac 12+\lambda}{(2l-1)\left[H+\frac 12+\lambda\right]}=\frac 1{(2l-1)\left[H( \frac 12+\lambda)^{-1}+1\right]}.
\end{split}
\end{equation*}
When $x$ is a solution to \eqref{eq:sdnoise}, for each $n\in \mathbb N$ we have
\begin{equation}
\begin{split}
\label{calc:Edudcont}
&\mathbb E\left[|f(x_n)+\sigma(x_n) \xi_{n+1}|^{-1}\bigl|\mathcal F_n\right]= \frac 1{2\delta(2l-1)}\sum_{i=1}^{2l-2}\int_{-1+\frac {2i}{2l-1}-\delta}^{  -1+\frac {2i}{2l-1}+\delta} \left|  f+\sigma s\right|^{-1}ds\\&+\frac1{2\delta (2l-1)}\int_{-1}^{-1+\delta}\left|  f+\sigma s\right|^{-1} ds+\frac1{2\delta (2l-1)}\int_{1-\delta}^{1} \left|  f+\sigma s\right|^{-1}ds.
\end{split}
\end{equation}
Here we set again $f:=f(x_n)$, $\sigma:=\sigma(x_n)$ and get by the Mean Value Theorem
\begin{equation*}
\begin{split}
&\frac1{2\delta (2l-1)}\left[\int_{-1}^{-1+\delta}\left|  f+\sigma s\right|^{-1} ds+\int_{1-\delta}^{1} \left|  f+\sigma s\right|^{-1}ds\right]\\
= &\frac\delta{2\delta (2l-1)}\left[\left[ - f-\sigma (-1+\delta \theta_1)\right]^{-1}+\left[  f+\sigma (1-\delta \theta_2)\right]^{-1} \right]\\
= & \frac 1{2(2l-1)}\, \frac{2\sigma -\delta \sigma [\theta_1+\theta_2]}{\sigma^2 -f^2 +\delta (-\sigma^2[\theta_1+\theta_2]+\sigma^2\delta\theta_2\theta_1+ f\sigma(\theta_2-\theta_1))}\\ = &
\frac 1{2(2l-1)}\, \frac{2\left[\sigma +\delta \tau_{01}\right]}{\sigma^2 -f^2 +\delta \tau_{02}}=\frac{1}{ 2l-1}G\left(f, \, \sigma , \, \delta  \tau_{01}, \,\delta  \tau_{02}\right).
\end{split}
\end{equation*}
Here $G$ is defined by \eqref{def:G},  $|\theta_1|\leq 1$, $|\theta_2|\le 1$,
\begin{equation*}
\begin{split}
&\tau_{01}:=-\frac 12\sigma  [\theta_1+\theta_2], \quad \tau_{02}:=-\sigma^2[\theta_1+\theta_2]+\sigma^2\delta\theta_2\theta_1+f\sigma  [\theta_2-\theta_1],\\&
|\tau_{01}|\le u(H)=: L_1(H), \quad  |\tau_{02}|\le 3u^2(H)+2Hu(H) =: L_2(H).
\end{split}
\end{equation*}
Analogously, for $i=1, \dots, l-1$,
\begin{equation*}
\begin{split}
&\frac1{2\delta (2l-1)}\left[\int_{-1+\frac {2i}{2l-1}-\delta}^{  -1+\frac {2i}{2l-1}+\delta} \left|  f+\sigma s\right|^{-1}ds+
\int_{1-\frac {2i}{2l-1}-\delta}^{  1-\frac {2i}{2l-1}+\delta} \left|  f+\sigma s\right|^{-1}ds\right]\\
= &
\frac{1}{ 2l-1}\frac{2\left[\sigma \left(1-\frac {2i}{2l-1}\right)+\delta \tau_{i1}\right]}{\sigma^2 
\left(1-\frac {2i}{2l-1}\right)^2-f^2+\delta  \tau_{i2}}=\frac{2}{ 2l-1}G\left(f, \, \sigma \left(1-\frac {2i}{2l-1}\right), \, \delta  \tau_{i1}, \,\delta  \tau_{i2}\right).
\end{split}
\end{equation*}
Here $|\theta_{i1}|\leq 1$, $|\theta_{i2}|\le 1$,
$\displaystyle \tau_{i1}:=-\frac 12\sigma  [\theta_{i1}+\theta_{i2}]$, $\displaystyle \tau_{i2}:=-\sigma^2 \left(1-\frac 
{2i}{2l-1}\right)[\theta_{i1}+\theta_{i2}]+\sigma^2\delta\theta_{i2}\theta_{i1}+f\sigma [\theta_{i2}-\theta_{i1}]$,
$\displaystyle |\tau_{i1}|\le u(H) =: L_1(H)$, $\displaystyle |\tau_2|\le 3u^2(H)+2Hu(H) =: L_2(H)$. 

Based on \eqref{est:Glambda1}, we claim that for each 
\begin{equation*}
\delta<\min\left\{ \frac1{2l-1}, \, \frac 1{(2l-1)\left[H( \frac 12+\lambda)^{-1}+1\right]}, \,\, \frac {\varsigma}{\max\{L_1(H), \, L_2(H) \}}  \right\}
\end{equation*}
and $i=0, 1, \dots, l-1$, we have 
$
\left(f, \, \sigma \left(1-\frac {2i}{2l-1}\right), \, \delta  \tau_{i1}, \,\delta  \tau_{i2}\right)\in \mathcal A_\lambda,
$
which yields that 
\\
$
G\left(f, \, \sigma \left(1-\frac {2i}{2l-1}\right), \, \delta  \tau_{i1}, \,\delta  \tau_{i2}\right)\le 1-\frac {\varepsilon_{\lambda}}2$.
The above argument implies
\begin{equation*}
\begin{split}
&\mathbb E\left[|f(x_n)+\sigma(x_n) \xi_{n+1}|^{-1}\bigl|\mathcal F_n\right]\\
= & \frac{1}{ 2l-1}G\left(f, \, \sigma , \, \delta  \tau_{01}, \,\delta  \tau_{02}\right)
+\frac{2(l-1)}{ 2l-1}G\left(f, \, \sigma \left(1-\frac {2i}{2l-1}\right), \, \delta  \tau_{i1}, \,\delta  \tau_{i2}\right)
\\ \le & \left[\frac{2l-2}{ 2l-1}+\frac{1}{ 2l-1}\right]\left( 1-\frac{\varepsilon_{\lambda}}2\right)= 1-\frac{\varepsilon_{\lambda}}2<1.
 \end{split}
\end{equation*}
\end{proof}

\begin{proof} [Proof of Theorem  \ref{thm:destabtrunc}]
First, we prove that whenever $x_0\in (0, b)$, there exists some random $\mathcal N_0$ such that  $x_{\mathcal N_0}>b$, a.s.  
Assume the contrary:  for all $n\in \mathbb N$ we have $x_n<b$ on some $\Omega_1\subseteq \Omega$, where $\mathbb P(\Omega_1)>0$.

 {\it Case (a). } Reasoning as in Theorem~\ref{lem:Ealpha123}, (ii),  we conclude that for $x\in [0, b]$, expression 
\eqref 
{expr:-1} is well defined. By \eqref{ineq:sdud}, for $\sigma_b$ defined as in \eqref{def:modsigmal} and $x<0$, a.s.,
\[
|f(x)+\sigma_b(x) \xi |\ge |\sigma_b(x) \xi |-|f(x)|\ge \frac{1+\sqrt{1+4f^2(x)}}{2}-f(x)>\frac 12.
\]
In addition, for  $x\ge b$ expression \eqref {expr:-1} is also well defined since, a.s., $
f(x)+\sigma_b(x) \xi_{n+1}=f(x)>0$. Further,
for each $n\in \mathbb N$  we have
 \[
|x_{n}|=x_{0}\prod_{i=0}^{n-1}\left| f(x_i)+\sigma_{b}(x_i)\xi_{i+1} \right|=x_0\frac 1{M_n}\frac 1{\prod_{i=1}^{n-1}\mathbb E\left[ \left| f(x_i)+\sigma_{b}(x_i)\xi_{i+1} \right|^{-1}\bigl |\mathcal F_{i}\right]},
\]
where $M_n$ is defined by \eqref{def:mart} with $\alpha=1$.  By assumption,  $x_i<b$ on $\Omega_1$ for all $i\in \mathbb N$. Then,  by the choice of $\sigma_{b}$ in \eqref{def:modsigmal},  we have $\mathbb E\left[ \left| f(x_i)+\sigma_{b}(x_i)\xi_{i+1} \right|^{-1}\bigl |\mathcal F_{i}\right]\le 1$ on $\Omega_1$, see \eqref{calc:Edud}. Also the non-negative martingale $(M_n)_{n\in \mathbb N}$ converges to an a.s. finite limit.
This implies  that there exists a random $\mathcal N\in \mathbb N$ such that, 
on $\Omega_1$,  $|x_{k}|\ge \frac{x_{0}}{M_{k}}\ge \varsigma(\omega)>0$ for some random a.s. positive $\varsigma(\omega)$ and $k>\mathcal N$. So there exists 
$\Omega_2\subseteq \Omega_1$ with $\mathbb P(\Omega_2)>0$, and a nonrandom $c>0$ such that $|x_{k}|\ge c$ on $\Omega_2$ for $k\ge 
\mathcal N$.  

If for  a certain $k\ge \mathcal N$, we have $x_k>0$ on some $\Omega_{21} \subseteq \Omega_2$ with $\mathbb P(\Omega_{21})>0$ then  it should be $c\le x_k<b$, i.e. we have $c<b$.   Assume now that $x_{k}<0$  on $\Omega_2$ for   $k\ge \mathcal N$. 
Applying Lemma \ref{lem:topor} with $J=1$, we conclude that there exists $\mathcal N_1>\mathcal N$ such that $\xi_{\mathcal N_1+1}\le -\frac 1{2l-1}$. 
Suppose  that it is the first such moment after $\mathcal N$. 
By \eqref{def:modsigmal} and \eqref{ineq:sdud}, this implies $f(x_{\mathcal N_1})+\sigma_{b}(x_{\mathcal N_1})\xi_{{\mathcal N_1}+1}<0$,
and therefore, $x_{\mathcal N_1+1}>0$, which leads up to the previous case.  So we have $0<c<b$.

Denote
\[
a(c, b) := \inf_{x\in(c, b)}f(x)>0,
\]
where positivity is by   Assumption~\ref{as:Fbd}~(i). Let  
\begin{eqnarray*}
Q_l(c, b):&=&\inf_{x\in  (c, b)}\left[f(x)+\frac 1{2l-1}\sigma_{b}(x)\right] \geq \inf_{x\in  (c, b)}\left[f(x)+\frac{1+\sqrt{1+4f^2(x)}}{2}\right]\\&\ge& 
\left[a(c, b)+\frac{1+\sqrt{1+4a^2(c, b)}}{2}\right]\ge 1+\varepsilon_{a},
\end{eqnarray*}
where $\varepsilon_{a}>0$ and then  $Q(c, b)>1$. Set 
\[
 j_1:=\left\lfloor \frac{\ln\frac bc}{\ln {Q(c, b)}}\right\rfloor +1= \left\lfloor  \log_{Q(c, b)} \frac{b}{c} \right\rfloor +1,
 \]
where $\lfloor t \rfloor$ is the largest integer not exceeding $t$, then 
$\displaystyle
cQ(c, b)^{j_1}>b$.
By Lemma \ref{lem:topor}, for $J=2j_1+1$  we can find $\mathcal N_2>\mathcal N_1$ such that
\begin{equation}
\label{def:xil1}
\begin{split}
& \xi_i=\frac 1{2l-1}, \,\,  i=\mathcal N_2, \dots, \mathcal N_2+j_1-1, ~~ \\ & \xi_{\mathcal N_2+j_1}=-\frac1{2l-1}, \
~\xi_{\mathcal N_2+j_1+i}=\frac 1{2l-1}, \,\,  i=1, \dots, j_1.
\end{split}
\end{equation}
Recall that by assumptions on $\Omega_1$, $\Omega_2\subseteq \Omega_1$ and for $n \ge \mathcal N_2-1$ we have either $x_n\in [c,b]$ if $x_n$ is positive, or $x_n\le -c$ if $x_n$ is negative.

 Assume first that $x_{\mathcal N_2-1}\in [c, b]$, on some  $\Omega_3\subseteq \Omega_2$ with  $\mathbb P(\Omega_3)>0$.  So $f(x_i)\ge a(c, b)$ for $i=\mathcal N_2-1, \dots,  \mathcal N_2+j_1-2$, and, applying  \eqref{ineq:sdud} and \eqref{def:xil1}, we get 
\begin{equation*}
\begin{split}
x_{\mathcal N_2+j_1-1}&= x_{\mathcal N_2-1}\prod_{i=\mathcal N_2-1}^{\mathcal N_2+j_1 -2}\left[ f(x_i)+\sigma(x_i)\xi_{i+1}\right]
\\&> x_{\mathcal N_2-1}\prod_{i=\mathcal N_2-1}^{\mathcal N_2+j_1 -2}\left[ f(x_i)+\frac{1+\sqrt{1+4f^2(x_i)}}{2}\right]\ge cQ^{j_1}>b,
\end{split}
\end{equation*}
which contradicts to our assumption  that $x_n\le b$ on all $\Omega_1$ for each $n\in \mathbb N$.  

Now assume that $x_{\mathcal N_2-1}<0$ on some $\Omega_4\subseteq \Omega_2$ with  $\mathbb P(\Omega_4)>0$.  By \eqref{def:xil1}, we get on $\Omega_4$ for $m=\mathcal N_2-1, \, \mathcal N_2, \dots,  \mathcal N_2+j_1-2$,
\[
f(x_m)+(2l-1)\frac{1+\sqrt{1+4f^2(x_m)}}{2}\xi_{m+1}\ge \frac 12>0\quad \mbox{so} \quad x_{\mathcal N_2+j_1-1}<0.
\]
However, for $m=\mathcal N_2+j_1-1$, 
\[
f(x_m)+(2l-1)\frac{1+\sqrt{1+4f^2(x_m)}}{2}\xi_{m+1}=f(x_m)-\frac{1+\sqrt{1+4f^2(x_m)}}{2}\le -\frac 12<0, \]
so $x_{\mathcal N_2+j_1}>0$. 

If $x_{\mathcal N_2+j_1}\in (c, b)$ on some $\Omega_5\subseteq \Omega_2$ with  $\mathbb P(\Omega_5)>0$,
we can apply the same  reasoning  as above for $x_{\mathcal N_2+j_1}$ and obtain 
\[
x_{\mathcal N_2+2j_1}= x_{\mathcal N_2+j_1}\prod_{i=\mathcal N_2+j_1}^{\mathcal N_2+2j_1-1 }\left| f(x_i)+\frac{1+\sqrt{1+4f^2(x_i)}}{2}\right|\ge cQ^{j_1}>b,
\]
which again contradicts to our assumption.  The same holds if $x_{\mathcal N_2+j_1}>b$.

{\it Case (b). }   Choosing $\sigma$ such that $\sigma-f$ is continuous on $[-b,b]$, from condition  \eqref{cond:fe} 
we conclude that there exists $v>0$ such that
\begin{equation}
\label{est:sigmafv}
\sigma(x) >f(x)+v, \quad x\in [-b, b].
\end{equation}
Define 
\begin{equation}
\label{def:varepsilon}
\varepsilon < \min\left\{1, \, 1-\frac {1}{H} \sup_{x \in [-b,b]} f(x), \,  \frac{v}{H} \right\},
\end{equation}
and
$\displaystyle \varsigma_1:=\mathbb P\{\xi\in (-1, \, -1+\varepsilon/2) \}>0$, $\displaystyle \varsigma_2:=\mathbb P\{\xi\in 
(1-\varepsilon/2, 1)>0
 \}$.
Note also that   \eqref{est:sigmafv} and \eqref{def:varepsilon} imply
\begin{equation}
\label{est:sigmafeH}
\sigma(x) > f(x)+v\ge f(x)+\varepsilon H, \quad x\in [-b, b].
\end{equation}
The beginning of the proof  is the same as in (a): we assume that $x_n<b$ for all $n\in \mathbb N$
and get  that $|x_k| \geq c>0$ on $\Omega_2$, $k>\mathcal N$.  To show that 
 $x_k<0$ for all  $k>\mathcal N$ is impossible,  we note that 
there exists $\mathcal N_1>\mathcal N$ such that $\xi_{\mathcal N_1+1}<-1+\varepsilon/2$.  Then, by \eqref{est:sigmafeH},  
$$
\begin{array}{ll}
\displaystyle f(x_{\mathcal N_1})+\sigma(x_{\mathcal N_1})\xi_{\mathcal N_1+1} & 
\displaystyle < f(x_{\mathcal N_1})+(f(x_{\mathcal N_1})+\varepsilon H)(-1+\varepsilon/2) \\ &
\displaystyle =\varepsilon f(x_{\mathcal N_1})/2-\varepsilon H(1-\varepsilon/2)<\frac{\varepsilon H}2(-1+\varepsilon)<0,
\end{array}
$$
which implies $x_{\mathcal N_1+1}>0$. So, in the same way as in (a), we conclude that $0<c<b$.
 
Since $\sigma(x)>e$, $f$ is positive on $(c, b)$, and due to the choice of $\varepsilon$, we can define 
\begin{equation}
\label{def:Qs}
\begin{split}
Q_s(c, b):= & \inf_{x\in  (c, b)}\left[f(x)+\sigma_{b}(x)\left( 1-\frac{\varepsilon}{2} 
\right) \right] \\ > & a(b,c)+e\left( 1-\frac{\varepsilon}{2} \right) \geq 
e\left( 1-\frac{\varepsilon}{2} \right)>1.
\end{split}
\end{equation}
Set  
$\displaystyle
 j_1:=\left\lfloor \frac{\ln\frac bc}{\ln {Q(c, b)}}\right\rfloor +1= \left\lfloor \log_Q \frac{b}{c} \right\rfloor +1,
$ 
 then $\displaystyle cQ_s^{j_1}>b$. By Lemma~\ref{lem:topor} for $J=2j_1+1$  we can find $\mathcal N_2>\mathcal N_1$ such that
\begin{equation}
\label{def:xis}
\begin{split}
&
\xi_i\in (1-\varepsilon/2, 1), ~ i=\mathcal N_2, \dots, \mathcal N_2+j_1-1, ~ \xi_{\mathcal N_2+j_1}\in (-1, -1+\varepsilon/2), 
\\
&
\xi_{\mathcal N_2+j_1+i}\in (1-\varepsilon/2, 1), ~~i=1, \dots, j_1.
\end{split}
\end{equation}
Recall that by assumptions on $\Omega_2$ and for $n \ge \mathcal N_2-1$, we have either $x_n\in [c,b]$ if $x_n$ is positive, or 
$x_n\le -c$ if $x_n$ is negative.

 Assume first that $x_{\mathcal N_2-1}\in [c, b]$, on some  $\Omega_3\subseteq \Omega_2$ with  $\mathbb P(\Omega_3)>0$.  So, by  \eqref{def:xis}, 
 $x_i\in(c, b)$  for $i=\mathcal N_2-1, \dots \mathcal N_2+j_1-2$.
Then, by \eqref{def:Qs} and \eqref{def:xis}, we have 
\begin{equation*}
\begin{split}
x_{\mathcal N_2+j_1-1}&= x_{\mathcal N_2-1}\prod_{i=\mathcal N_2-1}^{\mathcal N_2+j_1 -2}\left[ f(x_i)+\sigma_b(x_i)\xi_{i+1}\right]\ge cQ_s^{j_1}>b,
\end{split}
\end{equation*}
which contradicts to our assumption  that $x_n\le b$ on all $\Omega_1$.  

Now assume that $x_{\mathcal N_2-1}<0$
on some $\Omega_4\subseteq \Omega_2$ with  $\mathbb P(\Omega_4)>0$.  By \eqref{def:xis}, we get on $\Omega_4$ for $m=\mathcal N_2-1, \, \mathcal N_2, \dots,  \mathcal N_2+j_1-2$,
\[
f(x_m)+\sigma_b(x_m)\xi_{m+1}\ge \sigma_{b}(x)(1-\varepsilon/2)>1,\quad \mbox{so} \quad x_{\mathcal N_2+j_1-1}<0.
\]
However, for $m=\mathcal N_2+j_1-1$, also, by \eqref{est:sigmafeH} and \eqref{def:xis},
\begin{align*}
& f(x_m)+\sigma_b(x_m)\xi_{m+1}=f(x_m)+\sigma_{b}(x_m)(-1+\varepsilon/2)\\ \le &-\left(f(x_m)+\varepsilon H \right)(+1-\varepsilon/2)+f(x_m)
\le -\frac{\varepsilon H}2(1-\varepsilon)<0, 
\end{align*}
so $x_{\mathcal N_2+j_1}>0$.  If $x_{\mathcal N_2+j_1}\in (c, b)$ on some $\Omega_5\subseteq \Omega_2$ with  $\mathbb P(\Omega_5)>0$, we can apply the same  reasoning  as above for $x_{\mathcal N_2+j_1}$, use \eqref{def:xis} and obtain that
$x_{\mathcal N_2+2j_1}>b$, which again contradicts to our assumption. 

Thus, in both cases,  (a) and (b), the  solution started in $(0, b)$ will be greater than $b$ at some a.s. finite random moment $\mathcal N_4=\inf\{n: x_n>b \}$.  Then we have either $x_{\mathcal N_4-1}\in (0, b)$, on some on  $\Omega_{11}$,  or $x_{\mathcal N_4-1}\in (-b, 0)$, on some on  $\Omega_{12}$, $\mathbb P[\Omega_{11}], \mathbb P[\Omega_{12}]>0$. In the first case, by  \eqref{cond:Fbd}  we have $x_{\mathcal N_4}\in [b, d]$ and due to  the fact that $\sigma(x)=0$ for $x\ge b$ we get that, on~$\Omega_{11}$, 
\begin{equation}
\label{est:N4k}
x_{\mathcal N_4+k}\in [b, d], \quad \forall k\in \mathbb N.
\end{equation}
 If $x_{\mathcal N_4-1}\in (-b, 0)$ we have 
$\displaystyle
f(x_{\mathcal N_4-1}) +\sigma(x_{\mathcal N_4-1})\xi_{{\mathcal N_4}}\ge f(x_{\mathcal N_4-1}) -\sigma(x_{\mathcal N_4-1}),
$ 
and, by  \eqref{cond:Fbd}, 
\[
x_{\mathcal N_4}=x_{\mathcal N_4-1}\left[f(x_{\mathcal N_4-1}) +\sigma(x_{\mathcal N_4}-1)\xi_{{\mathcal N_4}}\right]\le 
x_{\mathcal N_4-1}\left[f(x_{\mathcal N_4-1}) -\sigma(x_{\mathcal N_4-1})\right]\le d,
\]
which again implies \eqref{est:N4k}.
If  $x_0\in (b, d)$,  we immediately get that $x_1\in (b, d)$ and therefore, $x_n\in (b, d)$ for all $n>1$.  
 \end{proof}

  \begin{remark} 
  \label{rem:Fm}
    Theorem \ref{thm:destabtrunc} remains correct  for distribution \eqref{def:dud} with $l=1$  and $f(x)=0$ for $x<0$, when  instead of truncated equation \eqref{eq:sdnoiseb-b}  we consider 
\begin{equation*}
x_{n+1}=x_n f(x_n)+\sigma_b(x_n) x_n \xi_{n+1}, \quad x_0\in (0, b), \quad n\in {\mathbb N}_0,
\end{equation*} 
and  take 
$\displaystyle
\sigma(x)=\frac{1+\sqrt{1+4f^2(x)}}2+\frac 1{2H+2}f(x)$, $x\in \mathbb R$.
\end{remark}

\end{document}